\newcommand{\IR}{\mathbb{R}}
\newcommand{\IN}{\mathbb{N}}
\newcommand{\IZ}{\mathbb{Z}}
\newcommand{\IC}{\mathbb{C}}
\newcommand{\IQ}{\mathbb{Q}}
\newcommand{\IH}{\mathbb{H}}
\newcommand{\frake}{\mathfrak{e}}
\newcommand{\frakv}{\mathfrak{v}}
\newcommand{\frakw}{\mathfrak{w}}
\newcommand{\calA}{\mathcal{A}}
\newcommand{\calE}{\mathcal{E}}
\newcommand{\calF}{\mathcal{F}}
\newcommand{\calG}{\mathcal{G}}
\newcommand{\calK}{\mathcal{K}}
\newcommand{\calL}{\mathcal{L}}
\newcommand{\calN}{\mathcal{N}}
\newcommand{\calR}{\mathcal{R}}
\newcommand{\calS}{\mathcal{S}}
\newcommand{\calW}{\mathcal{W}}
\renewcommand{\Re}{\operatorname{Re}}
\renewcommand{\Im}{\operatorname{Im}}
\newcommand{\sgn}{\operatorname{sgn}}
\newcommand{\Mp}{\operatorname{Mp}}
\newcommand{\res}{\operatorname{res}}
\newcommand{\Gr}{\operatorname{Gr}}
\newcommand{\SL}{\operatorname{SL}}
\newcommand{\bs}{\ensuremath{\backslash}}
\newcommand{\vol}{\operatorname{vol}}
\newcommand{\Eis}{\operatorname{Eis}}
\newcommand{\M}{\operatorname{M}}
\renewcommand{\S}{\operatorname{S}}
\newcommand{\Iso}{\operatorname{Iso}}
\newcommand{\Inv}{\operatorname{Inv}}
\theoremstyle{definition}
\newtheorem{defn}{Definition}[section]
\theoremstyle{plain}
\newtheorem{thm}[defn]{Theorem}
\newtheorem{lem}[defn]{Lemma}
\newtheorem{cor}[defn]{Corollary}
\newtheorem{prop}[defn]{Proposition}
\theoremstyle{remark}
\newtheorem{rem}[defn]{Remark}
\begin{document}
	
\title[Orthogonal Eisenstein Series of Singular Weight]{Orthogonal Eisenstein Series at Harmonic Points and Modular Forms of Singular Weight}
\author{Paul Kiefer}
\thanks{The author was partially supported by the LOEWE research unit USAG}
\thanks{Partially funded by the Deutsche Forschungsgemeinschaft (DFG, German Research Foundation) TRR 326 \textit{Geometry and Arithmetic of Uniformized Structures}, project number 444845124.}

\begin{abstract}
\noindent
We investigate the behaviour of orthogonal non-holomorphic Eisenstein series at their harmonic points by using theta lifts. In the case of singular weight, we show that the orthogonal non-holomorphic Eisenstein series that can be written as a theta lift have a simple pole at $s = 1$ whose residues yield holomorphic orthogonal modular forms that are Eisenstein series on the boundary and we give a sufficient condition on the surjectivity of this construction.
\end{abstract}

\maketitle

\tableofcontents

\section{Introduction}

Let $L$ be an even lattice in a rational quadratic space $V$ of signature $(2, l)$. There is an index $2$ subgroup of the corresponding orthogonal group $O^+(2, l) \subseteq O(2, l)$ acting on the orthogonal upper half-plane $\IH_l$. Similar to the case of elliptic modular forms, it is possible to define orthogonal modular forms, which can be seen to be global sections of a hermitian line bundle. For $l = 1$ we obtain the classical case of elliptic modular forms and for $l = 2$ we obtain Hilbert modular forms for real quadratic number fields.

An important problem is the construction of such modular forms, in particular for low weight. It turns out that there is a minimal weight for non-zero holomorphic modular forms, which is given by $\kappa = \frac{l}{2} - 1$ for $l > 2$, see \cite{Bundschuh}. The weight $\kappa = \frac{l}{2} - 1$ is called the singular weight, and modular forms of singular weight have many vanishing Fourier coefficients. In particular, there are no cusp forms of singular weight (for an analogous theory in the Siegel case see \cite{FreitagSiegel}, where it is shown that every holomorphic modular form of singular weight is a linear combination of theta functions). In contrast to the symplectic case, there are no holomorphic theta series for the orthogonal group (except for some low dimensional examples where we have an exceptional isomorphism from the orthogonal group to the symplectic group). Moreover, the usual constructions of holomorphic modular forms do not work in low weight. For example Eisenstein series of low weight do not converge. On the other hand, using the celebrated multiplicative Borcherds lift of \cite[Theorem 13.3]{Borcherds}, examples of holomorphic modular forms of singular weight can be constructed. For results in this direction see \cite{DittmannSingularWeight}, \cite{SchwagenscheidtSingularWeight}, \cite{ScheithauerSingularWeight}. Another method is to use the additive Borcherds lift of \cite[Theorem 14.3]{Borcherds} using holomorphic modular forms of weight $0$, i.e. invariant vectors, as input functions.

Our aim is to investigate Eisenstein series $\calE_{\kappa, \lambda}(Z)$ of low weight, in particular of singular weight $\kappa = \frac{l}{2} - 1$ by considering the non-holomorphic Eisenstein series $\calE_{\kappa, \lambda}(Z, s)$, get a meromorphic continuation to all $s \in \IC$ and hope that they yield holomorphic modular forms for special values of $s$. This will be done using the results of \cite{Kiefer}.

We will give more details now. As above, let $L$ be an even lattice of signature $(2, l)$ and $L'$ the corresponding dual lattice. Throughout we will assume that $L$ has Witt rank $2$ and that $l > 2$. For a primitive isotropic vector $z \in L$ and $z' \in L'$ with $(z, z') = 1$ consider $K = L \cap z^\perp \cap z'^\perp$. Let $\IH_l = K \otimes \IR + i C$, where $C$ is a fixed connected component of
$$\{Y \in K \otimes \IR \mid q(Y) > 0 \}.$$
Then $\IH_l$ is a hermitian symmetric domain and there is an index $2$ subgroup $O^+(L \otimes \IR) \subseteq O(L \otimes \IR)$ acting on $\IH_l$. Let $\Gamma(L)$ be the kernel of the natural map $O^+(L) \to O(L' / L)$. By the theory of Baily-Borel (see \cite{BailyBorel}, \cite{BorelJi}) we can add $0$-dimensional boundary components (points) and $1$-dimensional boundary components (isomorphic to $\IH$) to obtain $\IH_l^*$, such that the quotient $\Gamma(L) \bs \IH_l^*$ is a compact complex analytic space, which can be shown to be projective and hence, by the Chow's theorem, algebraic. For an holomorphic orthogonal modular form $F : \IH_l \to \IC$ of weight $\kappa \in \IZ$ with respect to $\Gamma(L)$ one can now define its restriction $F \vert_I : \IH \to \IC$ to a $1$-dimensional boundary component $I$. One easily sees that $F \vert_I$ is a holomorphic modular form of weight $\kappa$ for an appropriate arithmetic subgroup of $\SL_2(\IQ)$. In particular, it makes sense to talk about the space $\M_\kappa^{\partial \Eis}(\Gamma(L))$ of holomorphic orthogonal modular forms that are linear combinations Eisenstein series on the boundary. If $F \vert_I$ vanishes for every boundary component, we say that $F$ is a cusp form and we write $\S_\kappa(\Gamma(L))$ for the space of cusp forms.

For an easier exposition, we assume throughout the introduction that $L$ splits two hyperbolic planes over $\IZ$. Then the $0$-dimensional cusps of $\Gamma(L) \bs \IH_l$ are in bijective correspondence to the isotropic elements in $L' / L$ (which we denote by $\Iso(L' / L)$) up to $\pm 1$. For an appropriate automorphic form $f : \IH \to \IC[L' / L]$ of weight $k$ consider the additive Borcherds lift defined by \cite{Borcherds}
\begin{align}\label{eq:ThetaLift}
\Phi(Z, f) := \int_{\SL_2(\IZ) \bs \IH}^{\text{reg}} \langle f(\tau), \Theta_L(\tau, Z) \rangle v^k \frac{\mathrm{d}u \mathrm{d}v}{v^2},
\end{align}
where $\Theta_L$ is a certain Siegel theta function of weight $k$ in $\tau$ (see Section \ref{sec:SiegelthetaFunction}). In \cite{Kiefer} we have seen that the additive Borcherds lift $\Phi_{k, \beta}(Z, s)$ of a vector-valued non-holomorphic Eisenstein series $E_{k, \beta}(\tau, s), \beta \in \Iso(L' / L)$ of weight $k$ for the Weil representation $\rho_L$ defined by \cite{BruinierKuehn} (in \cite{BruinierKuehn} the dual Weil representation $\rho_L^*$ is considered) is a linear combination of orthogonal non-holomorphic Eisenstein series $\calG_{\kappa, \delta}(Z, s)$ of weight $\kappa = \frac{l}{2} - 1 + k$ with respect to $\Gamma(L)$ corresponding to a $0$-dimensional cusp $\delta \in \Iso(L' / L)$. To obtain holomorphic Eisenstein series one would try to evaluate this at the harmonic point $s = 0$. By calculating the Fourier expansion, one can split up $\Phi_{k,\beta}(Z) := \Phi_{k, \beta}(Z, 0)$ into a holomorphic part $\Phi_{k,\beta}^+(Z)$ and a non-holomorphic part $\Phi_{k,\beta}^-(Z)$. We will show

\begin{prop}[{see Proposition \ref{prop:HolomorphicBoundaryPart}}]
The holomorphic part $\Phi_{k,\beta}^+(Z)$ is given by
\begin{align*}
\Phi_{k,\beta}^{+}(Z)
&= \frac{\Gamma(\kappa) N_z^\kappa}{(-2 \pi i)^\kappa} \sum_{b \in \IZ / N_z \IZ} \delta_{\beta, \frac{bz}{N_z}} \zeta^b(\kappa) \\
&+ \sum_{\substack{\lambda \in K \\ q(\lambda) = 0 \\ (\lambda, Y) > 0 \\ \lambda \text{ primitive}}} \sum_{\substack{b\in \IZ / N_z \IZ \\ c \in \IZ / N_\lambda \IZ}} \delta_{\beta, \frac{c \lambda}{N_\lambda} - \frac{c (\lambda, \zeta)}{N_\lambda N_z} z + \frac{bz}{N}} \sum_{m = 1}^\infty \tilde{\sigma}_{\kappa - 1}^{c, b}(m) e\left(\frac{m (\lambda, Z - \frac{\zeta_K}{N_z})}{N_\lambda}\right) \\
&+ \sum_{\substack{\lambda \in K' \\ q(\lambda) > 0 \\ (\lambda, Y) > 0}} b(\lambda) e\left(\frac{(\lambda, Z - \frac{\zeta_K}{N_z})}{N_\lambda}\right)
\end{align*}
where $N_z$ and $N_\lambda$ are the levels of $z$ and $\lambda$,
$$\tilde{\sigma}_{\kappa - 1}^{c, b}(m) = \sum_{\substack{n \mid m \\ \frac{m}{n} \equiv c \bmod{N_\lambda}}} \sgn(n) n^{\kappa - 1} e\left(\frac{nb}{N_z}\right)$$
is a divisor sum over positive and negative divisors and $b(\lambda)$ are certain Fourier coefficients.
\end{prop}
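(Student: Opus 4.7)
The plan is to derive the formula by computing the Fourier expansion of the theta lift $\Phi_{k,\beta}(Z,s)$ at the $0$-dimensional cusp corresponding to $z$, then specialise at the harmonic point $s = 0$ and read off the holomorphic contributions.

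First I would decompose the Siegel theta function $\Theta_L(\tau, Z)$ according to the splitting $L \otimes \IR = (K \otimes \IR) \oplus \IR z \oplus \IR z'$, then apply Poisson summation in the $\IZ z \oplus \IZ z'$ direction to obtain the standard expansion of $\Theta_L$ in terms of $\Theta_K$ and exponentials indexed by vectors $\lambda \in K'$ (compare \cite{Borcherds}). Next I would unfold the regularised integral \eqref{eq:ThetaLift} by writing $E_{k,\beta}(\tau, s)$ as a sum of $v^s \e_\beta$ over cosets $\Gamma_\infty \bs \SL_2(\IZ)$ and using the $\SL_2(\IZ)$-equivariance of $\Theta_L$, so that after interchanging sum and integral only one fundamental strip has to be integrated.

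The resulting integral splits naturally according to the value of $q(\lambda)$ of the $K$-component into three families matching the three displayed lines of the proposition. For $\lambda = 0$ the $v$-integral yields completed partial $L$-values, which specialise at $s = 0$ to the values $\zeta^b(\kappa)$ and produce the first line. For primitive isotropic $\lambda \in K$, parametrising all other isotropic vectors as positive multiples, the sum over these multiples together with the $v$-integral produces the twisted divisor sum $\tilde{\sigma}_{\kappa - 1}^{c, b}(m)$; the $\sgn(n)$ weighting and the combination of positive and negative divisors emerge from the two $\pm$-contributions in the unfolded Eisenstein series at $s = 0$. For $q(\lambda) > 0$ the integrand carries an extra exponential decay and the $v$-integral produces $K$-Bessel functions, which at $s = 0$ collapse to the pure exponentials $e((\lambda, Z - \zeta_K/N_z)/N_\lambda)$ times arithmetic coefficients $b(\lambda)$.

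The main obstacle is bookkeeping: the Kronecker delta conditions on $\beta$, the coordinate shifts by $\zeta$ and by $\zeta_K/N_z$, the levels $N_z$ and $N_\lambda$, and the characters $\e(nb/N_z)$ appearing in $\tilde{\sigma}_{\kappa-1}^{c,b}$ all arise from the interaction between the Weil representation action on $\Iso(L'/L)$ and the Poisson-summed decomposition of $\Theta_L$, and this has to be unpacked very carefully for the expression to come out in the stated form. A secondary point is to verify that, at $s = 0$, the terms retained above really constitute the holomorphic part $\Phi_{k,\beta}^+$, while the incomplete gamma-function contributions coming from the $q(\lambda) > 0$ summands, together with any non-holomorphic contribution from the constant term, assemble into $\Phi_{k,\beta}^-$ and may therefore be discarded.
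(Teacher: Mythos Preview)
Your outline is correct in spirit and would eventually produce the stated formula, but it re-derives far more than the paper does here. The Fourier expansion of $\Phi_{k,\beta}(Z,s)$ obtained by unfolding against the Borcherds decomposition of $\Theta_L$ is not proved in this paper at all: it is imported as Theorem~\ref{thm:ThetaLiftFourierExpansion} from \cite{Kiefer}. The specialisation at $s=0$ is then carried out in Lemma~\ref{lem:ThetaLiftFourierExpansions=0}, whose only non-trivial input is the $K$-Bessel identity of Lemma~\ref{lem:SumOfKBesselFunctions} (this is the mechanism by which, for $q(\lambda)>0$ and $(\lambda,Y)>0$, the double sum of Bessel functions collapses to a single exponential, and for $(\lambda,Y)<0$ it vanishes). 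After that, $\Phi_{k,\beta}^+(Z)$ is simply \emph{defined} to be the resulting holomorphic piece.

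Consequently the actual proof of Proposition~\ref{prop:HolomorphicBoundaryPart} is a short combinatorial rewriting of the isotropic contribution, not an analytic computation. One replaces each nonzero isotropic $\lambda\in K'$ by $m\lambda_0/N_{\lambda_0}$ with $\lambda_0\in K$ primitive and $m\geq 1$, groups the inner divisors $n\mid m$ according to the residue $c\equiv m/n \bmod N_{\lambda_0}$, and then absorbs the second Kronecker delta $(-1)^\kappa\delta_{-\beta,\dots}$ by allowing $n$ to run over negative divisors as well; this is exactly what produces the $\sgn(n)$ in $\tilde{\sigma}_{\kappa-1}^{c,b}(m)$. Your proposal alludes to this step only in passing (``the sum over these multiples together with the $v$-integral produces the twisted divisor sum''), but in the paper this reindexing \emph{is} the whole proof, while the unfolding and the $v$-integrals you emphasise are already packaged in the cited results.
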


Since only the terms with $q(\lambda) = 0$ in the Fourier expansion contribute to the restrictions to boundary components, one easily sees that the holomorphic part restricted to a boundary component is given by an Eisenstein series, see \cite[Theorem 4.2.3]{DiamondShurman} for their Fourier expansions. By inspecting the non-holomorphic part and using that vector-valued non-holomorphic Eisenstein series converge at $s = 0$ for $k > 2$ we obtain

\begin{thm}[{see Theorem \ref{thm:HolomorphicEisensteinSeriesk>2}}]
The non-holomorphic part $\Phi_{k,\beta}^-(Z)$ vanishes for $k > 2$ and hence $\Phi_{k,\beta}(Z) = \Phi_{k,\beta}^+(Z)$ is a holomorphic orthogonal modular form that is an Eisenstein series on the boundary. Moreover, every holomorphic orthogonal modular form that is a linear combination of Eisenstein series on the boundary is obtained as a theta lift (up to cusp forms).
\end{thm}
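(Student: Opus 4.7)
The plan is to treat the two claims separately. For the vanishing of $\Phi_{k,\beta}^-$, I would revisit the Fourier expansion of $\Phi_{k,\beta}(Z,s)$ produced by Rankin--Selberg unfolding of the theta integral against $E_{k,\beta}(\tau,s)$. The non-holomorphic Fourier coefficients of $\Phi_{k,\beta}(Z,s)$ carry factors inherited from the non-holomorphic Whittaker part of the input $E_{k,\beta}(\tau,s)$. For $k>2$ this series converges absolutely at $s=0$ and agrees there with the classical holomorphic Eisenstein series of weight $k$, so every non-holomorphic Fourier coefficient of the input vanishes at $s=0$, and hence so does every non-holomorphic Fourier coefficient of $\Phi_{k,\beta}(Z,s)|_{s=0}$. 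This yields $\Phi_{k,\beta}(Z)=\Phi_{k,\beta}^+(Z)$, and in particular holomorphicity follows from the explicit formula of the preceding proposition. Restricting that formula to a $1$-dimensional boundary component $I$ kills all terms with $q(\lambda)>0$, leaving only the $q(\lambda)=0$ contributions, which match the Fourier expansion of a classical Eisenstein series on $I$ (see \cite[Theorem~4.2.3]{DiamondShurman}).

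For the surjectivity statement, I would consider the boundary restriction map
\[
\partial\colon \M_\kappa^{\partial\Eis}(\Gamma(L))/\S_\kappa(\Gamma(L)) \hookrightarrow \bigoplus_I \Eis_\kappa(I),
\]
which is injective by the very definition of $\S_\kappa(\Gamma(L))$. It suffices to show that the image is spanned by the classes $[\Phi_{k,\beta}]$ for $\beta\in\Iso(L'/L)$. Since $0$-dimensional cusps of $\Gamma(L)\bs\IH_l^*$ correspond bijectively to $\Iso(L'/L)/\{\pm 1\}$, and the explicit expression for $\Phi_{k,\beta}^+$ shows that $\Phi_{k,\beta}|_I$ has a distinguished nonzero contribution attached to the cusp $[\beta]$ on each $I$ whose closure contains $[\beta]$, one can proceed by inversion: given $F\in\M_\kappa^{\partial\Eis}(\Gamma(L))$, read off from the boundary Eisenstein expansion of $F$ constants $c_\beta$ so that $\partial F$ and $\sum_{\beta} c_\beta \, \partial(\Phi_{k,\beta})$ have the same Eisenstein contribution at every $0$-dimensional cusp. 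Then $F-\sum_{\beta}c_\beta\Phi_{k,\beta}$ restricts trivially to every boundary component, hence lies in $\S_\kappa(\Gamma(L))$.

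The main technical obstacle is to verify that these coefficients $c_\beta$, determined cusp-by-cusp, produce boundary data that match $F|_I$ on \emph{every} $1$-dimensional boundary component simultaneously, and not merely at the distinguished cusps $[\beta]$. The essential tool for this is the identification from \cite{Kiefer} of $\Phi_{k,\beta}(Z,s)$ as an explicit linear combination of the orthogonal non-holomorphic Eisenstein series $\calG_{\kappa,\delta}(Z,s)$: the consistency question reduces to understanding the restrictions of the $\calG_{\kappa,\delta}(Z,0)$ to each $I$, which are computable from the same unfolding. All remaining work is then bookkeeping of the level and lattice coset data ($N_z$, $N_\lambda$, and the cosets indexed by $b,c$) appearing in the Fourier expansion of $\Phi_{k,\beta}^+$.
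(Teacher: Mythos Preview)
Your treatment of the vanishing of $\Phi_{k,\beta}^-$ is exactly the paper's argument: for $k>2$ the input $E_{k,\beta}(\tau,0)$ is holomorphic, so the Fourier coefficients $c_{k,\beta}(\gamma,n,0)$ vanish for $n\le 0$, and the explicit expansion (Theorem~\ref{thm:ThetaLiftFourierExpansion} and Lemma~\ref{lem:ThetaLiftFourierExpansions=0}, together with \cite[Theorem~10.3]{Borcherds} for the $\Phi^K$ term) then shows $\Phi_{k,\beta}^-=0$.

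For surjectivity your strategy is the right one, but you overcomplicate one step and underestimate another. The ``technical obstacle'' you raise --- whether matching at $0$-dimensional cusps forces matching on every $1$-dimensional boundary component --- dissolves once you observe (as the paper remarks just before Theorem~\ref{thm:HolomorphicEisensteinSeriesk>2}) that an Eisenstein series on a $1$-dimensional boundary component is already determined by its constant terms at the adjacent $0$-dimensional cusps. So there is no need to invoke the $\calG_{\kappa,\delta}$ decomposition or to unfold further; once the $0$-dimensional cusp values agree, the boundary restrictions agree automatically. Conversely, the step you pass over --- ``read off constants $c_\beta$'' --- is where the actual work lies: the value of $\Phi_{k,\beta}$ at a $0$-dimensional cusp $\delta$ is not $\delta_{\beta,\delta}$ but a Hurwitz-zeta combination $\zeta^b(\kappa)$ over $b\in\IZ/N_z\IZ$, and inverting this system requires a genuine argument. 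The paper handles this in a separate proposition by twisting with Dirichlet characters and using nonvanishing of $L(\chi,\kappa)$ to produce, for each isotropic $\delta\in L'/L$, a lift $\Phi_{k,\frakv}$ concentrated at $\pm\delta$ (in fact equal to $\calG_{\kappa,\delta}$). Only after that inversion does your final subtraction argument go through. Note also that the paper's version of the surjectivity is for $\M_\kappa^\pi(\Gamma(L))$; your bijection between $0$-dimensional cusps and $\Iso(L'/L)/\{\pm1\}$ is precisely the hypothesis that collapses $\M_\kappa^\pi$ and $\M_\kappa^{\partial\Eis}$.
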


For $k = 0$ and hence singular weight $\kappa = \frac{l}{2} - 1$ this does not work immediately, since the non-holomorphic part does not vanish in general. Using the functional equation relating the values at $s$ and $1 - s$ we will instead inspect the behaviour at $s = 1$. We obtain

\begin{thm}[{see Theorem \ref{thm:ResidueAts=1}}]
The additive Borcherds lift $\Phi_{0,\beta}(Z, s)$ has a simple pole at $s = 1$ with residue given by
\begin{align*}
&\frac{\Gamma(\kappa) N_z^\kappa}{(-2 \pi i)^\kappa}\sum_{b \in \IZ / N_z \IZ} \res_{s = 1} c_{0,\beta}\left(\frac{bz}{N_z}, 0, s\right) \zeta^b(\kappa) \\
&+ \sum_{\substack{\lambda \in K' \\ q(\lambda) = 0 \\ (\lambda, Y) > 0}} e\left(- \frac{(\lambda, \zeta)}{N_z}\right) \sum_{b \in \IZ / N_z \IZ} \sum_{n \mid \lambda} n^{\kappa - 1} e\left(\frac{nb}{N_z}\right) \\
&\times \res_{s = 1} c_{0,\beta}\left(\frac{\lambda}{n} - \frac{(\lambda, \zeta)}{nN_z} z + \frac{bz}{N_z}, 0, s\right) e(\lambda, Z).
\end{align*}
where $c_{0,\beta}(0, 0, s)$ is a Fourier coefficients of the vector-valued non-holomorphic Eisenstein series $E_{0, \beta}(\tau, s)$. In particular, it is a holomorphic orthogonal modular form of singular weight that is an Eisenstein series on the boundary.
\end{thm}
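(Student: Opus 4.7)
The plan is to keep the $s$-dependence explicit throughout the Fourier expansion of $\Phi_{0,\beta}(Z,s)$ and then read off the residue term by term. For $k > 2$ the preceding Proposition was obtained by unfolding the theta integral against the Fourier expansion of $E_{k,\beta}(\tau,0)$; I would carry out the same unfolding for $k = 0$ but with $s$ kept as a parameter, so that each Fourier coefficient of $\Phi_{0,\beta}(Z,s)$ appears as an explicit sum over $\lambda \in K'$ with coefficients built from $c_{0,\beta}(\mu,n,s)$ and elementary factors depending on $q(\lambda)$, $(\lambda,Y)$ and $s$. This representation is valid a priori for $\Re(s) \gg 0$, and by the meromorphic continuation recalled from \cite{Kiefer} it extends to a punctured neighbourhood of $s = 1$.

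The second step is to locate the poles. By the formulas of Bruinier--K\"uhn the Fourier coefficients $c_{0,\beta}(\mu,n,s)$ of the weight-zero vector-valued non-holomorphic Eisenstein series are meromorphic in $s$, and in $\Re(s) > 1/2$ the only singularity is a simple pole at $s = 1$, occurring precisely for isotropic indices, i.e.\ for $\mu \in \Iso(L'/L)$ and $n = 0$. Inserting this into the unfolded expansion, the only Fourier modes that can survive in $\res_{s=1} \Phi_{0,\beta}(Z,s)$ come from $\lambda \in K'$ with $q(\lambda) = 0$: for anisotropic $\lambda$, and for the Whittaker pieces attached to $q(\lambda) < 0$, the elementary factors and the matching Fourier coefficients $c_{0,\beta}(\mu,n,s)$ are both holomorphic at $s=1$, hence contribute nothing to the residue.

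Third, I would match the surviving contributions with the three-line formula in the statement. The $\lambda = 0$ term gives the constant contribution in the first line, with $\res_{s=1} c_{0,\beta}(bz/N_z, 0, s)$ paired against $\zeta^b(\kappa)$ exactly as in the corresponding constant term for $k > 2$. The primitive isotropic $\lambda \in K$ with $(\lambda, Y) > 0$ give the second and third lines after the same divisor-sum manipulation used in the Proposition, where the sum $\sum_{n \mid \lambda} n^{\kappa-1} e(nb/N_z)$ replaces the truncated divisor sum $\tilde{\sigma}_{\kappa-1}^{c,b}$ since we are picking up a residue rather than the value of an Eisenstein coefficient at $s = 0$. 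Finally, because the resulting Fourier series is supported only on isotropic $\lambda$ with $(\lambda, Y) \ge 0$, the residue is automatically holomorphic, and its restriction to any $1$-dimensional boundary component is an Eisenstein series by the Fourier expansions in \cite[Theorem 4.2.3]{DiamondShurman}, exactly as in Theorem~\ref{thm:HolomorphicEisensteinSeriesk>2}.

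The main obstacle is the second step: one has to verify uniformly in $(\mu,n)$ that $c_{0,\beta}(\mu,n,s)$ is holomorphic at $s = 1$ outside the isotropic constant term, and then justify exchanging $\res_{s=1}$ with the (meromorphically continued) Fourier expansion of the lift. Additional care is needed to track the $\Gamma$-factors and the powers of $v$ produced by the unfolded theta integral and by the functional equation used to transport the analysis from $s=0$ to $s=1$, since in principle these could create or cancel a pole at $s = 1$; one must also confirm that no residual non-holomorphic Whittaker terms appear, so that the formal holomorphic expression above is genuinely the full residue.
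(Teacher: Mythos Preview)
Your approach is essentially the paper's. The $s$-dependent Fourier expansion you propose to derive by ``unfolding with $s$ kept as a parameter'' is already provided verbatim by Theorem~\ref{thm:ThetaLiftFourierExpansion}, so the proof reduces to taking residues termwise in that expansion; the paper then disposes of the $q(\lambda)\neq 0$ terms by the holomorphicity of $c_{0,\beta}(\mu,n,s)$ at $s=1$ for $n\neq 0$ (exactly your second step) and treats the isotropic terms in parallel to Lemma~\ref{lem:ThetaLiftFourierExpansions=0} (your third step). One minor point: the functional equation is invoked only as motivation in the introduction and plays no role in the actual computation of the residue at $s=1$, so the ``transport from $s=0$ to $s=1$'' you list among the obstacles does not arise---you simply evaluate the explicit coefficients of Theorem~\ref{thm:ThetaLiftFourierExpansion} at $s=1$ directly.
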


In fact this is just the Borcherds lift of the invariant vector $\res_{s = 1} E_{0, \beta}(\tau, s)$ so that the theorem reads $\res_{s = 1} \Phi_{0, \beta}(Z, s) = \Phi(Z, \res_{s = 1} E_{0, \beta}(\cdot, s))$.

As in the higher weight case, it is natural to ask whether all holomorphic orthogonal modular forms of singular weight that are linear combinations of Eisenstein series on the boundary can be obtained in this way. Therefore we consider the adjoint theta lift $\Phi^*$ mapping holomorphic orthogonal modular forms of singular weight to modular forms of weight $0$. For an orthogonal modular form $F : \IH_l \to \IC$ it is given by
\begin{align}\label{eq:ThetaLiftAdjoint}
\Phi^*(\tau, F) := \int_{\Gamma(L) \bs \IH_l} F(Z) \Theta_L(\tau, Z) q(Y)^\kappa \frac{\mathrm{d}X \mathrm{d}Y}{q(Y)^l}.
\end{align}
We will show that if $F : \IH_l \to \IC$ is holomorphic of singular weight, then $\Phi^*(\tau, F)$ is a harmonic function of weight $0$ (see Lemma \ref{lem:AdjointThetaLiftHarmonic}). In Section \ref{sec:LiftingOrthogonal} we prove the following main theorem.

\begin{thm}[{see Theorem \ref{thm:AdjointThetaLiftInvariantVector}}]
For an orthogonal modular form of singular weight $\kappa = \frac{l}{2} - 1 > 0$ the theta lift $\Phi^*(\tau, F)$ is an invariant vector for the Weil representation given by
$$\frac{\Gamma(l/2)}{2(2 \pi)^{l / 2}} \sum_{\gamma \in \Iso(L' / L)} \sum_{\substack{\delta \in \Iso(L' / L) \\ \gamma = k_\delta \delta}} \zeta_+^{k_\delta}(l - \kappa) a_{F, \delta}(0) C(\delta) \frake_\gamma,$$
where $a_{F, \delta}(0)$ is the value of $F$ in the $0$-dimensional cusp corresponding to $\delta$ and $C(\delta)$ is a non-zero constant.
\end{thm}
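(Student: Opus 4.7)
By Lemma~\ref{lem:AdjointThetaLiftHarmonic} quoted above, $\Phi^*(\tau, F)$ is harmonic of weight $0$ for the Weil representation $\rho_L$, and the strategy is to unfold the integral (\ref{eq:ThetaLiftAdjoint}) so that it is manifestly an invariant vector of $\rho_L$ whose components in each $\frake_\gamma$-slot can be read off. I would therefore focus on computing the $\tau$-independent coefficient in each component.

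Concretely, I would decompose a fundamental domain for $\Gamma(L)\bs\IH_l$ into Siegel neighbourhoods around the $0$-dimensional cusps $\delta \in \Iso(L'/L)$. At each such cusp a holomorphic orthogonal modular form $F$ of singular weight $\kappa = \frac{l}{2}-1$ has a Fourier expansion supported on isotropic indices together with the constant term $a_{F,\delta}(0)$; this is the defining feature of singular weight used in \cite{Bundschuh} and is visible in the boundary terms of the formula for $\Phi^+_{k,\beta}$ above. Inserting the theta decomposition of $\Theta_L(\tau, Z)$ adapted to the isotropic vector $z_\delta$, the $X$-integration over a fundamental parallelotope in $K \otimes \IR$ orthogonalises Fourier modes in $Z$, so that only the constant term $a_{F,\delta}(0)$ produces a $\tau$-independent contribution; the isotropic Fourier terms of $F$ pair with strictly $\tau$-dependent Fourier coefficients of $\Theta_L$ and so must cancel in the sum over cusps once one uses that the total output is $\tau$-constant.

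What remains is, for each cusp $\delta$, to evaluate a Mellin-type integral in the $Y$-variable of a positive-definite theta series on a Euclidean sublattice of $K$, twisted by the singular-weight factor $q(Y)^{\kappa - l}$. This Mellin transform is standard and produces the gamma factor $\Gamma(l/2)/(2\pi)^{l/2}$ together with a zeta value at $l - \kappa = \frac{l}{2}+1$. The partial-zeta truncation $\zeta_+^{k_\delta}(l - \kappa)$ reflects the cone of convergence at the cusp, and the integer $k_\delta$ records the order in which the cusp class $\delta$ represents the component $\gamma = k_\delta \delta \in \Iso(L'/L)$. Summing over all $\delta$ with $k_\delta \delta = \gamma$ and inserting the cusp-normalisation constant $C(\delta)$ then produces the formula as stated.

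The main obstacle is the combinatorial and normalisation bookkeeping at the theta decomposition step: identifying exactly which pairs $(\delta, k_\delta)$ contribute to a given class $\gamma \in \Iso(L'/L)$, tracking the constant $C(\delta)$ through the Siegel theta splitting at each cusp, and verifying that the $Y$-Mellin transform really yields the partial zeta value $\zeta_+^{k_\delta}(l-\kappa)$ rather than a more general Epstein-type factor. A secondary but conceptually important point will be justifying rigorously that all non-constant Fourier contributions of $F$ at the cusps cancel after summation, which should follow once one checks that the relevant theta coefficients in $\tau$ are genuinely non-constant and the total lift is harmonic of weight $0$ with moderate growth.
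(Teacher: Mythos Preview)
Your overall outline is in the right spirit, but the unfolding strategy you propose is not the one the paper uses, and your version would create exactly the difficulties you flag at the end.

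The paper does \emph{not} decompose the fundamental domain $\Gamma(L)\backslash\IH_l$ into Siegel neighbourhoods of the cusps. Instead it first isolates the constant $\tau$-Fourier coefficient of $\Phi^*(\tau,F)$, which picks out precisely the terms with $q(\lambda)=0$ in the theta sum $\sum_{\lambda\in L'}$. It then unfolds this sum over nonzero isotropic $\lambda$ into $\Gamma(L)$-orbits, simultaneously enlarging the domain of integration from $\Gamma(L)\backslash\IH_l$ to $\Gamma(L)_\lambda\backslash\IH_l$. Writing each isotropic $\lambda$ as $m$ times a primitive vector and conjugating to the standard cusp via $\sigma_\lambda$, the $X$-integral now runs over a \emph{full} torus $\sigma_\lambda K_\lambda\backslash(K_z\otimes\IR)$, so orthogonality kills every nonzero Fourier mode of $F\vert_\kappa\sigma_\lambda$ outright and only $a_{F,\lambda}(0)$ survives. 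No cancellation argument among cusps is needed.

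Two further points where your sketch diverges from what actually happens. First, after the $X$-integration the remaining $Y$-integral is simply
\[
\int_{\Gamma(\sigma_\lambda K_\lambda)\backslash C}\exp\bigl(-2\pi v m^2/q(Y)\bigr)\,q(Y)^{-l}\,\mathrm{d}Y,
\]
which the paper evaluates by the change of variables $Y\mapsto (q(Y),Y/\sqrt{q(Y)})$, splitting it into a radial gamma integral times a positive constant $C(\lambda)$; there is no positive-definite theta series on a Euclidean sublattice involved. Second, the partial zeta value $\zeta_+^{k_\delta}(l-\kappa)$ does not come from a Mellin transform at all: it arises from the elementary sum $\sum_{m\geq 1,\, m\lambda\equiv\gamma} m^{\kappa-l}$ over multiples of the primitive isotropic vector.

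Finally, you do not state the concluding step: once the constant Fourier coefficient is seen to be independent of $v$, the lift is bounded; being harmonic of weight $0$ and bounded, it must equal that invariant vector. This is how the paper closes the argument, and it replaces your proposed ``cancellation of non-constant contributions''.
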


As a corollary one obtains

\begin{cor}[{see Corollary \ref{cor:AdjointThetaLiftKernel}}]
Assume that $L$ splits two hyperbolic planes. The theta lift $\Phi^*(\tau, F)$ vanishes if and only if $F$ vanishes in every $0$-dimensional cusp.
\end{cor}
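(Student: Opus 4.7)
The forward direction is immediate from the theorem: if $F$ vanishes at every $0$-dimensional cusp, then every $a_{F,\delta}(0)$ vanishes, and the explicit formula for $\Phi^*(\tau, F)$ is identically zero.

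For the converse, I would assume $\Phi^*(\tau, F) = 0$. Since $L$ splits two hyperbolic planes, the $0$-dimensional cusps are in bijection with $\Iso(L'/L)/\pm 1$, so it suffices to prove that $a_{F,\delta}(0) = 0$ for every $\delta \in \Iso(L'/L)$. Extracting the coefficient of $\frake_\gamma$ from the theorem gives, for each $\gamma \in \Iso(L'/L)$, the equation
$$\sum_{\substack{\delta \in \Iso(L'/L) \\ \gamma = k_\delta \delta}} \zeta_+^{k_\delta}(l-\kappa)\, C(\delta)\, a_{F,\delta}(0) = 0.$$
The plan is to invert this system by descent on $\ord(\delta)$ in $L'/L$. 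Pick $\delta_0 \in \Iso(L'/L)$ of maximal order $N$ among isotropic elements. For any $\gamma$ that generates $\langle \delta_0 \rangle$, a $\delta$ can contribute to the equation at $\gamma$ only if $\gamma \in \langle \delta \rangle$, which forces $\ord(\delta) \geq N$ and hence $\langle \delta \rangle = \langle \delta_0 \rangle$. Thus, letting $\gamma$ range over the $\phi(N)/2$ cusp classes among the generators of $\langle \delta_0 \rangle$, the corresponding equations involve only the unknowns $a_{F, m\delta_0}(0)$ with $m \in (\IZ/N\IZ)^\times/\pm 1$, and form a square linear system whose matrix has entries $C(m\delta_0)\, \zeta_+^{k}(l-\kappa)$ with $k$ running through an appropriate residue system modulo $N$.

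The main obstacle is the non-degeneracy of this local matrix. I would reduce it to the linear independence of Hurwitz-zeta values at $s = l - \kappa = \tfrac{l}{2} + 1$, which lies strictly inside the half-plane of absolute convergence. Rewriting the $\zeta_+^k$ in terms of Hurwitz zeta functions for residues mod $N$ and invoking the orthogonality of Dirichlet characters mod $N$, together with the non-vanishing of the associated $L$-functions at real $s > 1$, gives the required invertibility. Once all $a_{F, m\delta_0}(0)$ vanish for generators of the top-order cyclic subgroup, those terms drop out of the remaining equations, and the same argument applies at the next-smaller order, completing the proof by descent.
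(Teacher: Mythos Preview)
Your proposal is correct and follows essentially the same route as the paper: extract the $\frake_\gamma$-coefficients, reduce by maximality of order to a square system indexed by $(\IZ/N\IZ)^\times$ inside a single cyclic isotropic subgroup, diagonalize via Dirichlet characters, and conclude from $L(l-\kappa,\chi)\neq 0$ for $l-\kappa=\tfrac{l}{2}+1>1$. The paper phrases the induction slightly differently---it picks $\gamma$ of maximal order among those with $a_{F,\gamma}(0)\neq 0$ and derives a contradiction, rather than descending through all orders---but the substance is identical.
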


Since the theta lifts \eqref{eq:ThetaLift} and \eqref{eq:ThetaLiftAdjoint} are adjoint to each other we obtain

\begin{cor}[{see Corollary \ref{cor:ThetaLiftSurjective}}]
Assume that $L$ splits two hyperbolic planes. Then every holomorphic orthogonal modular form of singular weight for $\Gamma(L)$ that is an Eisenstein series on the boundary is the residue at $s = 1$ of some non-holomorphic Eisenstein series and the additive Borcherds lift is an isomorphism
$$\Inv(\IC[L' / L]) \to \M_\kappa^{\partial \Eis}(\Gamma(L)).$$
\end{cor}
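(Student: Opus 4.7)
Set $V := \Inv(\IC[L'/L])$ and $M := \M_\kappa^{\partial \Eis}(\Gamma(L))$. The preceding Theorem on the residue at $s = 1$ gives a map $\Phi : V \to M$ (which sends $\res_{s=1} E_{0,\beta}(\cdot,s)$ to $\res_{s=1} \Phi_{0,\beta}(Z,s)$), while the main adjoint Theorem gives $\Phi^* : M \to V$. The strategy is to show that $\Phi^*|_M$ is injective, use the adjointness of $\Phi$ and $\Phi^*$ to deduce surjectivity of $\Phi$, and then extract injectivity of $\Phi$ from an explicit computation of $\Phi^* \circ \Phi$ on $V$.

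First I would prove that $\Phi^*|_M$ is injective. If $\Phi^*(F) = 0$ for some $F \in M$, the preceding Corollary on the kernel forces $F$ to vanish at every $0$-dimensional cusp. For any $1$-dimensional boundary component $I \cong \IH$ the restriction $F|_I$ is a holomorphic elliptic modular form of weight $\kappa$ which is a linear combination of Eisenstein series; the cusps of the arithmetic subgroup of $\SL_2(\IQ)$ acting on $I$ coincide with the $0$-dimensional boundary components of $\IH_l^*$ adjacent to $I$. Hence $F|_I$ is an Eisenstein combination vanishing at all its cusps, so $F|_I \equiv 0$. Varying $I$ shows $F$ is a cusp form of singular weight, so $F = 0$ by \cite{Bundschuh}. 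Next I invoke the mutual adjointness of $\Phi$ and $\Phi^*$ with respect to the (regularised) Petersson pairings, which follows by unfolding the Siegel theta kernel comparing \eqref{eq:ThetaLift} and \eqref{eq:ThetaLiftAdjoint}. Adjointness gives $\operatorname{im} \Phi = (\ker \Phi^*|_M)^\perp = M$, so $\Phi$ is surjective. Since the residues $\res_{s=1} E_{0,\beta}(\cdot,s)$ with $\beta \in \Iso(L'/L)$ span $V$ (invariant vectors are supported on isotropic $\beta$ by $T$-invariance of the Weil representation, and one such residue arises from each isotropic class), any $F \in M$ can be written as a linear combination of residues $\res_{s=1} \Phi_{0,\beta}(Z,s)$, which proves the first assertion of the corollary.

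For injectivity of $\Phi$, adjointness gives $\ker \Phi = (\operatorname{im} \Phi^*|_M)^\perp$, and it suffices to show $\Phi^*|_M$ surjects onto $V$. I would compute $\Phi^* \circ \Phi : V \to V$ directly: plug $F = \Phi(v)$ into the explicit formula of the main adjoint Theorem, read off the boundary values $a_{\Phi(v),\delta}(0)$ from the residue formula in the preceding Theorem, and check that the resulting self-map of $V$ is block-diagonal in the $\Iso(L'/L)$-indexed decomposition with non-zero entries proportional to $\zeta_+^{k_\delta}(l-\kappa)\, C(\delta)$ — each factor non-zero since $l - \kappa = l/2 + 1 > 0$ and the constants $C(\delta)$ were already shown to be non-zero. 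Thus $\Phi^* \circ \Phi$ is an automorphism of $V$, which forces $\Phi$ to be injective and completes the proof that $\Phi$ is an isomorphism. The principal obstacle is the pair of technical inputs used here: the rigorous adjointness of $\Phi$ and $\Phi^*$ in the singular weight range, which requires a careful regularised unfolding valid for non-cuspidal inputs, and the explicit diagonalisation of $\Phi^* \circ \Phi$, which amounts to matching the Fourier coefficients in the residue expansion from the preceding Theorem with the cusp-value tuple $(a_{F,\delta}(0))_\delta$ appearing in the main adjoint formula and verifying that no off-diagonal interference occurs across $\Iso(L'/L)$-classes.
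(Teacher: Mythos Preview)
Your surjectivity argument is essentially the paper's: use adjointness of $\Phi$ and $\Phi^*$, the kernel corollary for $\Phi^*$, and the fact that there are no cusp forms of singular weight. The paper packages this slightly differently, first proving $\M_\kappa^\Phi(\Gamma(L)) = \M_\kappa^\pi(\Gamma(L))$ for arbitrary $L$ (Corollary~\ref{cor:ThetaLiftImage}) and then using that $\pi_L$ injective forces $\M_\kappa^\pi(\Gamma(L)) = \M_\kappa^{\partial\Eis}(\Gamma(L))$; but the core mechanism is the same.

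Your injectivity argument, however, is both different from and more laborious than the paper's. The paper does \emph{not} compute $\Phi^*\circ\Phi$; instead it proves a separate proposition stating that if $\pi_L$ is \emph{surjective} then $\Phi$ is injective, by reading off the constant Fourier coefficients of $\Phi_\frakv^{\partial+}$ in each $0$-dimensional cusp and running an $L$-function/character-orthogonality argument (an induction on the order of $\delta$) to conclude $\frakv + (-1)^\kappa \frakv^* = 0$. Since the Eichler criterion gives $\pi_L$ bijective when $L$ splits two hyperbolic planes, both hypotheses are available and the two halves combine to give the isomorphism.

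Your proposed route via $\Phi^*\circ\Phi$ can be made to work, but the description ``block-diagonal with non-zero entries'' is not accurate: the formula in Theorem~\ref{thm:AdjointThetaLiftInvariantVector} has genuine off-diagonal terms indexed by pairs $\gamma = k_\delta \delta$ with $\gamma \neq \delta$. What one actually gets is an upper-triangular structure after ordering isotropic elements by divisibility of their orders, and the invertibility then reduces to exactly the same $L$-function nonvanishing and character-orthogonality argument the paper uses directly. So your approach rediscovers the paper's injectivity mechanism inside a larger computation; the paper's version is cleaner because it isolates the constant-term argument and avoids composing with $\Phi^*$ at all.
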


If $L$ is a maximal lattice, then $\Inv(\IC[L' / L])$ is either $1$-dimensional (if $L$ is unimodular) or $0$-dimensional (if $L$ is not unimodular). Since maximal lattices of Witt rank $2$ always split two hyperbolic planes over $\IZ$, we obtain for singular weight $\kappa = \frac{l}{2} - 1$.

\begin{cor}[{see Corollary \ref{cor:ThetaLiftMaximalLattice}}]
If $L$ is a maximal lattice of Witt rank $2$, then the space $\M_\kappa^{\partial \Eis}(\Gamma(L))$ is either $1$-dimensional (if $L$ is unimodular) or $0$-dimensional (if $L$ is not unimodular). Moreover, if $\kappa = 2, 4, 6, 8, 10, 14$, i.e. $l = 6, 10, 14, 18, 22, 30$, then we have $\M_\kappa(\Gamma(L)) = \M_\kappa^{\partial \Eis}(\Gamma(L))$ and the space of holomorphic modular forms of singular weight is either $0$ or $1$-dimensional depending on $L$ being unimodular or not.
\end{cor}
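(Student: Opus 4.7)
My plan is to combine Corollary~\ref{cor:ThetaLiftSurjective} with standard facts about maximal lattices and about modular forms on $\SL_2(\IZ)$ of small weight.

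Since $L$ is maximal of Witt rank $2$, it splits two hyperbolic planes, so Corollary~\ref{cor:ThetaLiftSurjective} identifies $\M_\kappa^{\partial\Eis}(\Gamma(L))$ with $\Inv(\IC[L'/L])$ via the additive Borcherds lift. The first assertion therefore reduces to the dimension count $\dim \Inv(\IC[L'/L]) = 1$ if $L$ is unimodular and $= 0$ otherwise. I would argue this as follows: $T$-invariance of $v = \sum_\gamma v_\gamma \frake_\gamma$ forces $v_\gamma = 0$ whenever $q(\gamma) \notin \IZ$, and the standard fact that a maximal even lattice has anisotropic discriminant form implies $v \in \IC \cdot \frake_0$. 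Applying $\rho_L(S)$ to $\frake_0$ yields a vector proportional to $\sum_{\delta \in L'/L} \frake_\delta$, which lies in $\IC \cdot \frake_0$ exactly when $|L'/L| = 1$, i.e.\ when $L$ is unimodular.

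For the \emph{moreover} statement, pick $F \in \M_\kappa(\Gamma(L))$ and a $1$-dimensional boundary component $I \cong \IH$. A key intermediate step is that the arithmetic group acting on $I$ is $\SL_2(\IZ)$; this follows from the splitting $L = U \oplus U \oplus K$ by identifying the Levi of the parabolic attached to the isotropic plane defining $I$ with (a group containing) $\SL_2(\IZ)$, with equality forced by the maximality of $L$. Granted this, $F\vert_I \in \M_\kappa(\SL_2(\IZ))$. For $\kappa \in \{4, 6, 8, 10, 14\}$, where $\dim \M_\kappa(\SL_2(\IZ)) = 1$ and $\dim \S_\kappa(\SL_2(\IZ)) = 0$, the restriction $F\vert_I$ is necessarily a multiple of the classical Eisenstein series of weight $\kappa$, so $F \in \M_\kappa^{\partial\Eis}(\Gamma(L))$. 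For $\kappa = 2$, i.e.\ $l = 6$ (which admits no unimodular maximal lattice since $l - 2 = 4 \not\equiv 0 \pmod{8}$), we have $\M_2(\SL_2(\IZ)) = 0$, so $F\vert_I = 0$ for every $I$; thus $F$ is a cusp form of singular weight and must vanish, matching $\M_\kappa^{\partial\Eis}(\Gamma(L)) = 0$.

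The main obstacle is the boundary-stabilizer computation: while it is morally clear from $L = U \oplus U \oplus K$ that the $1$-dimensional boundary components of $\Gamma(L) \bs \IH_l$ are of the form $\SL_2(\IZ) \bs \IH$, a precise verification requires unravelling the parabolic subgroup associated to a rational isotropic plane, its Levi decomposition and the induced action on $I$, and then using the maximality of $L$ to identify the arithmetic quotient with the full modular group. Once this is in hand, the two parts combine to give exactly the dichotomy stated, with the list $\{2, 4, 6, 8, 10, 14\}$ arising as the even weights $\kappa \geq 2$ with $\dim \S_\kappa(\SL_2(\IZ)) = 0$.
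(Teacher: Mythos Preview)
Your proposal is correct and follows essentially the same route as the paper: reduce the first assertion to a dimension count for $\Inv(\IC[L'/L])$ via Corollary~\ref{cor:ThetaLiftSurjective} (using that a maximal lattice of Witt rank $2$ splits two hyperbolic planes), and deduce the second assertion from the fact that the $1$-dimensional boundary components are copies of $\SL_2(\IZ)\bs\IH$ together with $\S_\kappa(\SL_2(\IZ))=0$ for the listed weights. You supply more detail than the paper does---the explicit argument that $\Inv(\IC[L'/L])$ is spanned by $\frake_0$ and is nonzero only in the unimodular case, and the separate treatment of $\kappa=2$---while correctly flagging the boundary-stabilizer identification as the one step the paper simply asserts.
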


If $L$ does not split two hyperbolic planes over $\IZ$, then we can still fully determine the image of the theta lift (see Corollary \ref{cor:ThetaLiftImage}).

\subsection*{Acknowledgment}

I would like to thank my advisor J.H. Bruinier for suggesting this topic as part of my doctoral thesis. Moreover, I would like to thank him for his support and our helpful discussions.

\section{Vector-Valued Non-Holomorphic Eisenstein Series}

We will now introduce the Weil representation and vector-valued modular forms. Therefore, let $\IH := \{\tau = u + iv \in \IC \mid v > 0 \}$ be the usual upper half-plane. For $z \in \IC$ we write $e(z) := e^{2 \pi i z}$ and we denote by $\sqrt{z} = z^{\frac{1}{2}}$ the principal branch of the square-root, i.e. $\arg(\sqrt{z}) \in (- \frac{\pi}{2}, \frac{\pi}{2}]$. 

\begin{defn}
We denote by $\Mp_2(\IR)$ the \emph{metaplectic cover} of $\SL_2(\IR)$. It is realized as pairs $(M, \phi)$, where $M \in \SL_2(\IR)$ and $\phi : \IH \to \IC$ is a holomorphic square root of $\tau \mapsto c \tau + d$. The product for $(M_1, \phi_1), (M_2, \phi_2) \in \Mp_2(\IR)$ is given by
$$(M_1, \phi_1(\tau))(M_2, \phi_2(\tau)) := (M_1 M_2, \phi_1(M_2 \tau) \phi_2(\tau)),$$
where $\left(\begin{smallmatrix} a & b \\ c & d\end{smallmatrix}\right) \tau = \frac{a \tau + d}{c \tau + d}$ is the usual action of $\SL_2(\IR)$.
\end{defn}
By $\Mp_2(\IZ)$ we denote the inverse image of $\SL_2(\IZ)$ under the covering map. It is generated by
$$T = \left(\begin{pmatrix}1 & 1 \\ 0 & 1\end{pmatrix}, 1\right) \quad \text{and} \quad S = \left(\begin{pmatrix}0 & -1 \\ 1 & 0\end{pmatrix}, \sqrt{\tau}\right).$$
We have the relation $S^2 = (ST)^3 = Z$, where $Z = \left(\left(\begin{smallmatrix}-1 & 0 \\ 0 & -1\end{smallmatrix}\right), i\right)$ is the standard generator of the center of $\Mp_2(\IZ)$. Furthermore we will write $\Gamma_\infty := \{\left(\begin{smallmatrix}1 & n \\ 0 & 1\end{smallmatrix}\right) \mid n \in \IZ \}$ and $\tilde{\Gamma}_\infty := \{\left(\left(\begin{smallmatrix}1 & n \\ 0 & 1\end{smallmatrix}\right), 1\right) \mid n \in \IZ \}$. For an even non-degenerate lattice $L$ of signature $(b^+, b^-)$ consider the group ring $\IC[L' / L]$ with standard basis $(\frake_\gamma)_{\gamma \in L' / L}$. For $\frakv = \sum_{\gamma \in L' / L} \frakv_\gamma \frake_\gamma \in \IC[L' / L]$ we write $\frakv^* := \sum_{\gamma \in L' / L} \frakv_\gamma \frake_{-\gamma}$. Moreover we write $\langle \cdot, \cdot \rangle$ for the standard inner product on $\IC[L' / L]$ which is anti-linear in the second variable. Write $\Iso(L' / L) \subseteq L' / L$ for the set of isotropic elements and denote the subspace of vectors that are supported on isotropic elements by $\Iso(\IC[L' / L])$. Moreover we introduce the notation $\frake_\gamma(\tau) := e(\tau) \frake_\gamma = e^{2 \pi i \tau} \frake_\gamma$.

\begin{defn}
The \emph{Weil representation} is the unitary representation $\rho_L$ of $\Mp_2(\IZ)$ on $\IC[L' / L]$ defined by
$$\rho_L(T) \frake_\gamma := \frake_\gamma(q(\gamma)) \quad \text{and} \quad \rho_L(S) \frake_\gamma := \frac{\sqrt{i}^{b^- - b^+}}{\sqrt{L' / L}} \sum_{\delta \in L' / L} \frake_\delta(-(\gamma, \delta)).$$
The Weil representation factors through a finite quotient of $\Mp_2(\IZ)$. The space of invariant vectors under the Weil representation is denoted by $\Inv(\IC[L' / L])$.
\end{defn}

A short calculation using orthogonality of characters shows for example $\rho_L(Z) \frake_\gamma = i^{b^- - b^+} \frake_{-\gamma}$. For $\beta, \gamma \in L' / L$ we define the coefficients
$$\rho_{\beta, \gamma}(M, \phi) := \langle \rho_L(M, \phi) \frake_\gamma, \frake_\beta \rangle \quad \text{and} \quad \rho_{\beta, \gamma}^{-1}(M, \phi) := \langle \rho_L^{-1}(M, \phi) \frake_\gamma, \frake_\beta \rangle.$$

\begin{thm}[{\cite[Proposition 1.6]{Shintani}}]\label{thm:Shintani}
For $M \in \SL_2(\IZ)$ the coefficient $\rho_{\beta, \gamma}(\tilde{M})$ is given by
$$\sqrt{i}^{(b^- - b^+)(1 - \sgn(d))} \delta_{\beta, a\gamma} e(ab q(\beta))$$
if $c = 0$ and by
$$\frac{\sqrt{i}^{(b^- - b^+) \sgn(c)}}{\lvert c \rvert^{(b^- + b^+)/2}\sqrt{\lvert L' / L \rvert}} \sum_{r \in L / cL} e\left(\frac{a(\beta + r, \beta + r) - 2 (\gamma, \beta + r) + d (\gamma, \gamma)}{2c}\right)$$
if $c \neq 0$.
\end{thm}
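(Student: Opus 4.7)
The plan is to verify the formula on the generators $T$ and $S$ of $\Mp_2(\IZ)$ and then extend by induction on the complexity of a Bruhat-type decomposition, following the classical strategy of Shintani. Since both sides of the claimed identity are determined by the image of $\frake_\gamma$ under $\rho_L(\tilde{M})$, it suffices to verify that the explicit closed form agrees with the action of $\rho_L$ built from the defining formulas on $T$ and $S$ together with the group relations.

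First I would dispose of the $c = 0$ case, where $\det M = ad = 1$ forces $M = \pm T^{b}$, and the two metaplectic lifts differ by $Z^{2}$. Using $\rho_L(T^b) \frake_\gamma = e(b q(\gamma)) \frake_\gamma$ and $\rho_L(Z) \frake_\gamma = i^{b^- - b^+} \frake_{-\gamma}$, one directly computes $\langle \rho_L(\tilde{M}) \frake_\gamma, \frake_\beta \rangle$ to be $\delta_{\beta, \gamma} e(b q(\beta))$ for $d = 1$ and $i^{b^- - b^+} \delta_{\beta, -\gamma} e(-b q(\beta))$ for $d = -1$ (using $q(\gamma) = q(-\gamma)$). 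Comparison with the stated expression is immediate once one observes that $\sqrt{i}^{(b^- - b^+)(1 - \sgn(d))}$ equals $1$ for $d > 0$ and $i^{b^- - b^+}$ for $d < 0$, and that the exponent $ab$ absorbs the sign flip.

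For $c \neq 0$ the plan is induction on $\lvert c \rvert$. The base case $M = S$ has $a = d = 0$, $c = 1$, so the proposed sum collapses to
$$\frac{\sqrt{i}^{b^- - b^+}}{\sqrt{\lvert L' / L \rvert}} e(-(\gamma, \beta)),$$
which equals $\langle \rho_L(S) \frake_\gamma, \frake_\beta \rangle$ by the definition. For the inductive step one uses that $\SL_2(\IZ)$ is generated by $T$ and $S$: Euclidean division on $a/c$, together with left multiplication by $\tilde{T}^n$, gives $\tilde{M} = \tilde{T}^{n} \tilde{S} \tilde{M}'$ with $\lvert c(M') \rvert$ strictly smaller. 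Composing the formulas for $\rho_L(\tilde{T}^n \tilde{S})$ and $\rho_L(\tilde{M}')$ produces a double sum, indexed by $L' / L$ and $L / c'L$, which must be identified with the claimed single sum over $L / cL$.

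The main obstacle is precisely this collapsing. It requires a Gauss-sum reciprocity of Landsberg-Schaar type for quadratic forms on lattices, turning an expression of shape
$$\sum_{\delta \in L' / L} \sum_{r \in L / c' L} e\left(\frac{Q_1(\delta, r)}{2}\right) \quad \text{into} \quad \lvert c' \rvert^{(b^+ + b^-)/2 - 1/2} \sum_{r \in L / c L} e\left(\frac{Q_2(r)}{2 c}\right),$$
so that the prefactor $\lvert c \rvert^{-(b^+ + b^-)/2}$ evolves correctly under the recursion. Simultaneously one must track the eighth root of unity $\sqrt{i}^{(b^- - b^+) \sgn(c)}$ across steps of the Euclidean algorithm in which $c$ may change sign; this is the point at which the signature $(b^+, b^-)$ enters essentially, through the Weil index of the associated quadratic form on $L / cL$. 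Beyond this Gauss-sum bookkeeping and the careful handling of metaplectic phases, the remainder is a direct but somewhat lengthy calculation, and for this reason the statement is cited from \cite{Shintani} rather than reproduced here.
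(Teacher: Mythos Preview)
The paper does not supply a proof of this statement; it is simply quoted from \cite[Proposition 1.6]{Shintani} and used as a black box. There is therefore no ``paper's own proof'' to compare against.

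Your outline is a reasonable sketch of the standard approach: verify on $T$ and $S$, then induct via the Euclidean algorithm on $\lvert c \rvert$, collapsing the resulting double sum by a lattice Gauss-sum identity. The checks for $c=0$ and for $M=S$ are correct as written. However, the proposal remains exactly that---a sketch---since the Gauss-sum reciprocity step and the metaplectic phase bookkeeping (the ``main obstacle'' you yourself flag) are not actually carried out. For the purposes of this paper the citation to Shintani is what is expected; if you intend to include a self-contained proof you would need to fill in that collapsing lemma explicitly, which is where all the substance lies.
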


For a vector-valued function $f : \IH \to \IC[L' / L]$ we write $f_\gamma : \IH \to \IC$ for its components, i.e. $f = \sum_{\gamma \in L' / L} f_\gamma \frake_\gamma$. For $k \in \frac{1}{2} \IZ$ we define the \emph{Petersson slash operator} $f \mapsto f\vert_{k, L}(M, \phi)$ by
$$(f \vert_{k, L}(M, \phi))(\tau) = \phi(\tau)^{-2 k} \rho_L^{-1}(M, \phi) f(M \tau).$$
If $f : \IH \to \IC[L' / L]$ is smooth and invariant under the action of $T$, i.e. $f \vert_{k, L} T = f$, then we have a Fourier expansion
$$f(\tau) = \sum_{\gamma \in L' / L} \sum_{n \in \IZ + q(\gamma)} c(\gamma, n, v) \frake_\gamma(nu).$$

\begin{defn}
A function $f : \IH \to \IC[L' / L]$ is said to be a \emph{modular form} of weight $k$ with respect to the Weil representation if $f \vert_{k, L}(M, \phi) = f$ for all $(M, \phi) \in \Mp_2(\IZ)$. We call $f$ a \emph{holomorphic modular form} if $f$ is holomorphic with Fourier expansion
$$f(\tau) = \sum_{\gamma \in L' / L} \sum_{\substack{n \in \IZ + q(\gamma) \\ n \geq 0}} c(\gamma, n) \frake_\gamma(n\tau).$$
\end{defn}

Obviously, non-trivial modular forms only exist for weights with $2k + b^- - b^+ = 0 \bmod 2$.

Assume now that $b^+ - b^-$ is even and let $k \in \IZ$. Moreover set $\kappa = \frac{b^- - b^+}{2} + k$. Let $\beta \in \Iso(L' / L)$ and similar to \cite{BruinierKuehn} define the \emph{vector-valued non-holomorphic Eisenstein series} of weight $k$ by
$$E_{k, \beta}(\tau, s) = \frac{1}{2} \sum_{M \in \tilde{\Gamma}_\infty \bs \Mp_2(\IZ)} v^s \frake_\beta \vert_{k, L} M.$$
Observe that \cite{BruinierKuehn} consider the dual Weil representation $\rho_L$. For $\beta \in \Iso(L' / L)$ of order $N_\beta$ and a character $\chi : (\IZ / N_\beta \IZ)^\times$ we define
$$E_{k, \beta, \chi}(\tau, s) := \sum_{n \in (\IZ / N_\beta \IZ)^\times} \chi(n) E_{k, n \beta}(\tau, s).$$
More generally, for $\frakv \in \Iso(\IC[L' / L])$ we define
\begin{align*}
E_{k, \frakv}(\tau, s)
&= \frac{1}{2} \sum_{M \in \tilde{\Gamma}_\infty \bs \Mp_2(\IZ)} v^s \frakv \vert_{k, L} M \\
&= \sum_{\beta \in \Iso(L' / L)} \frakv_\beta E_{k, \beta}(\tau, s).
\end{align*}
We have $E_{k, \frakv^*} = (-1)^\kappa E_{k, \frakv}$ and a Fourier expansion of the form
\begin{align*}
E_{k, \frakv} &= (\frakv + (-1)^\kappa \frakv^*) v^s + \sum_{\gamma \in \Iso(L' / L)} c_{k, \frakv}(\gamma, 0, s) v^{1 - s - k} \\
&+ \sum_{\gamma \in L' / L} \sum_{\substack{n \in \IZ + q(\gamma) \\ n \neq 0}} c_{k, \frakv}(\gamma, n, s) \calW_s(4 \pi n y) \frake_\gamma(n u),
\end{align*}
where $\calW_s$ is a special Whittaker function. For the precise coefficients see \cite{BruinierKuehn}, \cite{Williams} (or \cite{BruinierKuss}, \cite{Scheithauer}, \cite{Schwagenscheidt} for the holomorphic case), we will not need them here. It can be easily seen that the vector-valued Eisenstein series are $\Mp_2(\IZ)$-translates of usual scalar-valued Eisenstein series. They are normalized such that their meromorphic continuation to $\IC$ are holomorphic in $s = 0$ and have a simple pole for $k = 0$ at $s = 1$ whose residue is an invariant vector. For an invariant vector $\frakv \in \Inv(\IC[L' / L])$ we have
$$E_k(\tau, s) \frakv = E_{k, \frakv}(\tau, s),$$
where $E_k(\tau, s)$ is the suitably normalized Eisenstein series for $\SL_2(\IZ)$. For $k > 2$ the series converges at $s = 0$ and defines a holomorphic Eisenstein series $E_{k, \frakv}(\tau) = E_{k, \frakv}(\tau, 0)$ with Fourier expansion
$$E_{k, \frakv}(\tau) = \frakv + (-1)^\kappa \frakv^* + \sum_{\gamma \in L' / L} \sum_{\substack{n \in \IZ + q(\gamma) \\ n > 0}} c_{k, \frakv}(\gamma, n) \frake_\gamma(n \tau).$$
For $k = 2$ they have a Fourier expansion of the form \cite{Miyake}, \cite{DiamondShurman}
$$E_{2, \frakv}(\tau) = \frakw v^{-1} + \frakv + (-1)^\kappa \frakv^* + \sum_{\gamma \in L' / L} \sum_{\substack{n \in \IZ + q(\gamma) \\ n > 0}} c_{2, \frakv}(\gamma, n) \frake_\gamma(n \tau).$$
Applying the lowering operator shows that $\frakw \in \Inv(\IC[L' / L])$ is an invariant vector. For $k = 0$ their residue at $s = 1$ yields an invariant vector and if $\frakv \in \Inv(\IC[L' / L])$ we obtain
$$\res_{s = 1} E_{0, \frakv}(\tau, s) = \res_{s = 1} E_0(\tau, s) \frakv,$$
in particular these residues span the space of invariants.

\section{Orthogonal Modular Forms}

From now on let $L$ be an even lattice of signature $(2, l)$ and let $V = L \otimes \IQ, V(\IR) = V \otimes \IR$. Write $P(V(\IC))$ for the corresponding \emph{projective space}. For elements $Z_L = X_L + i Y_L \in V(\IC) \setminus \{0\}$ we write $[Z_L]$ for the canonical projection onto $P(V(\IC))$. The subset
$$\calK = \{[Z_L] \in P(V(\IC)) \mid (Z_L, Z_L) = 0, (Z_L, \overline{Z_L}) > 0 \}$$
is the hermitian symmetric domain associated to $O(V)$. It has two connected components which are interchanged by $[Z_L] \mapsto [\overline{Z_L}]$. We choose one of them and call it $\calK^+$. The action of $O(V(\IR))$ on $V(\IR)$ induces an action on $\calK$. Let $O^+(V(\IR))$ be the subgroup which preserves the connected components of $\calK$ and let
$$\tilde{\calK}^+ = \{Z_L \in V(\IC) \setminus \{0\} \mid [Z_L] \in \calK^+ \}$$
be the preimage of $\calK^+$ under the projection. Write $\Iso_0(L)$ for the set of primitive isotropic elements of $L$. For $z \in \Iso_0(L)$ and $z' \in L'$ with $(z, z') = 1$ write $K = L \cap z^\perp \cap z'^\perp$. Let $d \in \Iso_0(K)$, $d' \in K'$ with $(d, d') = 1$ and $D = K \cap d^\perp \cap d'^\perp$. Moreover let $\tilde{z} = z' - q(z') z$ and $\tilde{d} = d' - q(d') d$. Let $d_3, \ldots, d_l$ be a basis of $D$. Then $\tilde{d}, d, d_3, \ldots, d_l$ is a basis of $K \otimes \IR$. We define the orthogonal upper half plane as
$$\IH_l := \{Z = X + iY \in W(\IC) = K \otimes \IC \mid q(Y) > 0, (Y, d) > 0\}.$$
We will write $Z = z_1 \tilde{d} + z_2 d + Z_D$ with $Z_D \in D \otimes \IC$ and analogously for $X, Y \in W(\IR)$. For $Z \in \IH_l$ we define
$$Z_L := Z - q(Z) z + \tilde{z}.$$
If the component $\calK^+$ is chosen properly, this yields a biholomorphic map $Z \mapsto [Z_L]$. By setting $i C = \IH_l \cap i (K \otimes \IR)$ we see that $\IH_l = K \otimes \IR + iC$ is a \emph{tube domain}. The action of $O^+(V(\IR))$ on $\calK^+$ induces an action of $O^+(V(\IR))$ on $\IH_l$. Let
$$j : O^+(V) \times \IH_l \to \IC^\times, \quad j(\sigma, Z) := (\sigma(Z_L), z)$$
be the \emph{factor of automorphy}, so that we have
$$j(\sigma, Z)(\sigma Z)_L = \sigma(Z_L)$$
and the cocycle relation
$$j(\sigma_1 \sigma_2, Z) = j(\sigma_1, \sigma_2 Z) j(\sigma_2, Z).$$
According to \cite[Lemma 3.20]{BruinierChern} we have
$$q(\Im(\sigma Z)) = \frac{q(\Im(Z))}{\lvert j(\sigma, Z) \rvert^2}.$$

\begin{defn}
A function $F : \tilde{\calK}^+ \to \IC$ is called \emph{modular form} of weight $\kappa \in \IZ$ with respect to $\Gamma \subseteq \Gamma(L)$ if it satisfies
\begin{enumerate}
\item[(i)] $F(t Z_L) = t^{-\kappa} F(Z_L)$ for all $t \in \IC^\times$.
\item[(ii)] $F(\sigma Z_L) = F(Z_L)$ for all $\sigma \in \Gamma$.
\end{enumerate}
\end{defn}

For a modular form $F : \tilde{\calK}^+ \to \IC$ of weight $\kappa \in \IZ$ with respect to $\Gamma$ define
$$F_z : \IH_l \to \IC, \quad F_z(Z) := F(Z_L) = F(Z - q(Z) z + \tilde{z}).$$
Then $F_z$ satisfies
$$F_z(\sigma Z) = j(\sigma, Z)^\kappa F_z(Z)$$
for all $\sigma \in \Gamma$ and we have a bijective correspondence between modular forms and functions with this transformation property. If we define the weight $\kappa$ slash operator $F_z \vert_\kappa \sigma$ for $\sigma \in O^+(V)$ by
$$(F_z \vert_\kappa \sigma)(Z) := j(\sigma, Z)^{-\kappa} F_z(\sigma Z),$$
then the modular forms on $\IH_l$ are exactly the functions that are invariant under the slash operator for all $\sigma \in \Gamma$. If $F_z$ is holomorphic on $\IH_l$ it has a Fourier expansion of the form
$$F_z(Z) = \sum_{\lambda \in K'} a_z(\lambda) e(\lambda, Z),$$
where $e(\lambda, Z) := e((\lambda, Z)) = e^{2 \pi i (\lambda, Z)}$.

\begin{defn}
We say that a modular form $f$ is a \emph{holomorphic modular form} if $f$ is holomorphic and for all $0$ dimensional cusps $z \in \Iso_0(L)$ we have $a_z(\lambda) = 0$ for $\lambda \notin \overline{C}$. We write $\M_\kappa(\Gamma)$ for the space of modular forms.
\end{defn}

\begin{rem}
For $l \geq 4$ the weight $\kappa = \frac{l}{2} - 1$ is called \emph{singular weight}. By the theory of singular weights, it is the smallest positive weight such that there are holomorphic modular forms for $\Gamma(L)$. Moreover, if
$$F_z(Z) = \sum_{\lambda \in K'} a_z(\lambda) e(\lambda, Z)$$
is a holomorphic modular form of singular weight, then $a_z(\lambda) \neq 0$ implies $\lambda \in C$.
\end{rem}

\begin{defn}
For $t > 0$ we define the \emph{Siegel domain} $\calS_t$ as the set of $Z = X + iY \in \IH_l$ satisfying
\begin{align*}
x_1^2 + x_2^2 + \lvert q(X_D) \rvert &< t^2, \\
1 / t &< y_1, \\
y_1^2 &< t^2 q(Y), \\
\lvert q(Y_D) \rvert &< t^2 y_1^2.
\end{align*}
The set of $Y \in C$ satisfying the last three inequalities is denoted by $\calR_t$. 
\end{defn}

We have the following

\begin{prop}[{\cite[Proposition 4.10]{BruinierChern}}]
Let $\Gamma \subseteq \Gamma(L)$ be a subgroup of finite index.
\begin{enumerate}
\item[(i)] For any $t > 0$ and any $g \in O^+(V)$ the set
$$\{\sigma \in \Gamma \mid \sigma g \calS_t \cap \calS_t \neq \emptyset \}$$
is finite.
\item[(ii)] There exists a $t > 0$ and finitely many $g_1, \ldots, g_n \in O^+(V)$ such that for
$$\calS = g_1 \calS_t \cup \ldots \cup g_n \calS_t$$
we have $\Gamma \calS = \IH_l$.
\end{enumerate}
\end{prop}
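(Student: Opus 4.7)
The plan is to deduce this proposition from the classical Borel--Harish-Chandra reduction theorem for arithmetic groups, by identifying the tube-domain Siegel set $\calS_t$ with a standard group-theoretic Siegel set in $G(\IR) := O^+(V(\IR))$.

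First, I fix a base point $Z_0 \in \IH_l$ whose stabilizer in $G(\IR)$ is a maximal compact subgroup $K_\infty$, yielding $\IH_l \cong G(\IR)/K_\infty$. The primitive isotropic vector $z \in L$ determines a rational maximal parabolic $P_z \subseteq G$ with Langlands decomposition $P_z = N_z A_z M_z$; the translations $Z \mapsto Z + X$ for $X \in K \otimes \IR$ lie in $N_z(\IR)$, the one-parameter group of dilations in the $\tilde{z}$-direction realises $A_z \cong \IR_{>0}$, and $M_z \cong O^+(1, l-1)$ acts on the residual slice modelled on $K \otimes \IR$. The decomposition $Z = z_1 \tilde{d} + z_2 d + Z_D$, combined with the further splitting of $K$ into the hyperbolic plane spanned by $d, \tilde{d}$ and the negative definite $D$, then corresponds to an Iwasawa decomposition adapted to $P_z$.

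Second, I would verify that the four inequalities defining $\calS_t$ translate into the defining conditions of a classical Siegel set $\omega \cdot A_{z, t} \cdot K_\infty$ for $P_z$. The bound $x_1^2 + x_2^2 + \lvert q(X_D)\rvert < t^2$ confines the unipotent coordinates to a relatively compact $\omega \subseteq N_z(\IR)$. The condition $y_1 > 1/t$ is exactly the usual condition $a > 1/t$ on the $A_z$-component. The pair $y_1^2 < t^2 q(Y)$ and $\lvert q(Y_D)\rvert < t^2 y_1^2$ then restricts the $M_z$-factor to a Siegel set inside the rank-one subgroup $O^+(1, l-1)$ acting on the cone $\calR_t$, which is classical and can alternatively be obtained by induction on $l$.

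Third, with this identification in place both conclusions become instances of the general theory. For (ii), primitive rational isotropic vectors in $L$ fall into finitely many $\Gamma$-orbits, since $\Gamma \subseteq \Gamma(L)$ has finite index and $\Gamma(L) \bs \Iso_0(L)$ is finite; choosing a representative for each orbit and conjugating $\calS_t$ by an appropriate $g_i \in O^+(V)$ sending $z$ to the representative cusp produces Siegel sets $g_i \calS_t$ whose $\Gamma$-translates cover $\IH_l$ for $t$ large enough. For (i), the finiteness of $\{\sigma \in \Gamma \mid \sigma g \calS_t \cap \calS_t \neq \emptyset\}$ is the standard Siegel-finiteness statement, a direct consequence of the discreteness of $\Gamma$ and the bounded overlap of a Siegel set with its $G(\IR)$-translates.

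The main obstacle is the bookkeeping in the second step: one has to write the Iwasawa decomposition of $O^+(V(\IR))$ explicitly in tube-domain coordinates, using the basis $\tilde{z}, z, \tilde{d}, d, d_3, \ldots, d_l$, and check that each of the four tube-domain inequalities corresponds term-by-term to the bound on the $N$-, $A$-, or $M$-factor. Once this matching is secured, the remainder of the argument is a direct invocation of Borel--Harish-Chandra.
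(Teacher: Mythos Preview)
The paper does not supply its own proof of this proposition: it is quoted verbatim from \cite[Proposition 4.10]{BruinierChern} and no \texttt{proof} environment follows the statement. So there is nothing in the present paper to compare your argument against.

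That said, your strategy---identifying the tube-domain inequalities with the conditions defining a group-theoretic Siegel set for the minimal rational parabolic of $O^+(V)$ and then invoking Borel--Harish-Chandra reduction theory---is exactly the standard route, and it is essentially what Bruinier carries out in the cited reference. One small point to tighten: in part (ii) you only mention representatives for the $0$-dimensional cusps (primitive isotropic vectors), but the Siegel sets $\calS_t$ are adapted to a fixed flag $z \in K \ni d$, i.e.\ to a $1$-dimensional boundary component. To cover $\IH_l$ you need one $g_i$ for each $\Gamma$-orbit of rational isotropic \emph{planes} (equivalently, for each minimal rational parabolic), not merely for each orbit of isotropic lines. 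With that correction your outline is sound.
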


The invariant volume element on $\IH_l$ is given by
$$\frac{\mathrm{d}X \mathrm{d}Y}{q(Y)^l}.$$
The previous proposition means in particular, that for a $\Gamma$-invariant measurable function $F : \IH_l \to \IC$ we have $F \in L^p(\IH_l / \Gamma)$ if and only if $\int_\calS \lvert F(Z) \rvert^p \frac{\mathrm{d}X \mathrm{d}Y}{q(Y)^l} < \infty$. This again is the case if for every choice of $z, z', d, d', t > 0$ the integral $\int_{\calS_t} \lvert F(Z) \rvert^p \frac{\mathrm{d}X \mathrm{d}Y}{q(Y)^l}$ is finite. We will need the following estimates.

\begin{lem}[{\cite[Lemma 4.13]{BruinierChern}}]\label{lem:SiegelDomainInequality}
Let $t > 0$. Then there exists $\varepsilon > 0$ such that for all $\lambda = (\lambda_1, \lambda_2, \lambda_D) \in K'$ and $Y \in \calR_t$ we have
$$\frac{(\lambda, Y)^2}{Y^2} - q(\lambda) \geq \varepsilon (y_2^2 \lambda_1^2 / 2 + y_1^2 \lambda_2^2 / 2 + Y^2 \lambda_D^2) / Y^2.$$
Using the inequality
$$\frac{y_1 y_2}{1 + t^4} < q(Y) < y_1 y_2$$
this yields
$$\frac{(\lambda, Y)^2}{Y^2} - q(\lambda) > \varepsilon (y_2 / y_1 \lambda_1^2 / 2 + y_1 / y_2 \lambda_2^2 / 2 + \lambda_D^2).$$
\end{lem}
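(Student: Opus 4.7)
The starting point is that $(\lambda, Y)^2/Y^2 - q(\lambda)$ is a positive-definite quadratic form in $\lambda$ for every fixed $Y \in C$: since $K \otimes \IR$ has signature $(1, l-1)$ and $q(Y) > 0$, the orthogonal complement $Y^\perp \subseteq K \otimes \IR$ is negative definite, and decomposing $\lambda = \frac{(\lambda, Y)}{(Y, Y)} Y + \lambda^-$ with $\lambda^- \in Y^\perp$ identifies this expression (up to a positive numerical factor) as the value at $\lambda$ of the majorant of $q$ associated to the positive line $\IR Y$. So the lemma asks for a uniform comparison of two positive-definite quadratic forms in $\lambda$, parametrized by $Y$, over the parameter set $\calR_t$.

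I would proceed by direct coordinate expansion. In the basis $\tilde d, d, d_3, \ldots, d_l$ one has $(\lambda, Y) = \lambda_1 y_2 + \lambda_2 y_1 + (\lambda_D, Y_D)$, $q(\lambda) = \lambda_1 \lambda_2 + q(\lambda_D)$, and $q(Y) = y_1 y_2 + q(Y_D)$. Clearing the denominator $Y^2 = 2 q(Y)$ and expanding gives
\begin{align*}
(\lambda, Y)^2 - 2 q(\lambda) q(Y)
&= \lambda_1^2 y_2^2 + \lambda_2^2 y_1^2 - 2 q(\lambda_D) y_1 y_2 \\
&\quad + 2(\lambda_1 y_2 + \lambda_2 y_1)(\lambda_D, Y_D) + (\lambda_D, Y_D)^2 - 2 \lambda_1 \lambda_2 q(Y_D) - 2 q(\lambda_D) q(Y_D),
\end{align*}
where the $2\lambda_1 \lambda_2 y_1 y_2$ cross term from the square cancels against the $-2 \lambda_1 \lambda_2 y_1 y_2$ coming from $-2 q(\lambda) q(Y)$. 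The three ``diagonal'' contributions on the first line already dominate the right-hand side of the lemma up to a multiplicative constant, since $-q|_D \geq 0$ and $q(Y) < y_1 y_2$.

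The remaining ``cross'' terms on the second line are absorbed into the diagonal line using the Siegel bounds. From $|q(Y_D)| < t^2 y_1^2$ and Cauchy--Schwarz on the positive-definite form $-q|_D$, one has $|(\lambda_D, Y_D)|^2 \leq 4 q(\lambda_D) q(Y_D) \leq 4 t^2 y_1^2 (-q(\lambda_D))$; from $y_1^2 < t^2 q(Y) < t^2 y_1 y_2$ one obtains the ratio bound $y_1 < t^2 y_2$, which lets one trade factors of $y_1$ for $y_2$ up to a factor of $t^2$. AM--GM applied to each cross term with a sufficiently small weight $\eta = \eta(t) > 0$ then bounds its absolute value by a fraction of the three diagonal blocks, leaving a strictly positive residual. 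This gives the first inequality for some $\varepsilon = \varepsilon(t) > 0$. The second inequality follows by substituting $y_1 y_2 / (1 + t^4) < q(Y) < y_1 y_2$ into the denominator $Y^2$ on the right, which only shrinks $\varepsilon$ by a $t$-dependent constant.

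The main obstacle is the combinatorial bookkeeping: the four problematic cross terms $(\lambda_1 y_2)(\lambda_D, Y_D)$, $(\lambda_2 y_1)(\lambda_D, Y_D)$, $\lambda_1 \lambda_2 q(Y_D)$, and $q(\lambda_D) q(Y_D)$ must be absorbed \emph{simultaneously}, so that each of the three diagonal blocks $\lambda_1^2 y_2^2$, $\lambda_2^2 y_1^2$, $-2 q(\lambda_D) y_1 y_2$ retains a positive residual. The argument is elementary but relies essentially on having both the upper bound $|q(Y_D)| < t^2 y_1^2$ and the ratio bound $y_1 < t^2 y_2$ from the definition of $\calR_t$; these are exactly the inputs that make the estimate uniform on the full (non-compact) Siegel region.
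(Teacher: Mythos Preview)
The paper does not prove this lemma; it is quoted verbatim from \cite[Lemma 4.13]{BruinierChern}. So there is no ``paper's proof'' to compare against, and I comment only on your sketch.

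Your coordinate expansion and the identification of the diagonal and cross terms are correct. The gap is in the absorption step. In the weighted AM--GM bound $2|ab|\le \eta a^{2}+\eta^{-1}b^{2}$, shrinking $\eta$ inflates the other coefficient, so you cannot make both contributions a small fraction of their diagonal blocks simultaneously. Concretely, after using your bounds $|q(Y_D)|<t^{2}y_{1}^{2}$ and $y_{1}<t^{2}y_{2}$, the term $-2\lambda_{1}\lambda_{2}q(Y_D)$ is controlled only by $2t^{4}\,|\lambda_{1}y_{2}|\,|\lambda_{2}y_{1}|$; for $t>1$ no choice of $\eta$ absorbs this into $\lambda_{1}^{2}y_{2}^{2}+\lambda_{2}^{2}y_{1}^{2}$ with a coefficient below $1$. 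The term $-2q(\lambda_D)q(Y_D)$ is worse: it is not a cross term at all but a non-positive multiple of the third diagonal block, of relative size $|q(Y_D)|/(y_{1}y_{2})$, which on $\calR_{t}$ can be as large as $t^{4}/(1+t^{4})$. For $t$ large this is close to $1$, and together with the genuine cross terms the residual goes negative. Your scheme works only when $t$ is small.

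What is missing is a reduction to a compact parameter set so that the positive-definiteness of the left-hand side does the work. Both sides are homogeneous of degree $0$ in $Y$, so normalise $q(Y)=1$; then substitute $\mu_{1}=\lambda_{1}y_{2}$, $\mu_{2}=\lambda_{2}y_{1}$. The target form becomes a fixed positive-definite quadratic form in $(\mu_{1},\mu_{2},\lambda_D)$, while the left-hand side becomes
\[
\tfrac{1}{2}\bigl(\mu_{1}+\mu_{2}+(\lambda_D,Y_D)\bigr)^{2}-\frac{\mu_{1}\mu_{2}}{1+|q(Y_D)|}-q(\lambda_D),
\]
which depends on $Y$ only through $Y_D$. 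On $\calR_{t}$ with $q(Y)=1$ one has $|q(Y_D)|<t^{4}$, a compact region in the negative-definite space $D\otimes\IR$. The ratio of the two forms is continuous and strictly positive on the product of this compact set with the unit sphere in $(\mu_{1},\mu_{2},\lambda_D)$, hence bounded below by some $\varepsilon(t)>0$. This is almost certainly the argument intended in \cite{BruinierChern}, and it replaces your absorption step entirely.
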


The following lemma is a slight generalisation of \cite[Lemma 4.11, 4.12]{BruinierChern}

\begin{lem}\label{lem:SiegelDomainIntegrable}
Suppose $p_2 + p < l - 1$ and $p_1 + p_2 + 2p < l$. Then
$$\int_{\calS_t} y_1^{p_1} y_2^{p_2} q(Y)^p \frac{\mathrm{d}X \mathrm{d}Y}{q(Y)^l} < 0.$$
\end{lem}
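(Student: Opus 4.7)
The plan is to perform the integration iteratively, in the order $X$, $Y_D$, $y_2$, $y_1$, and to track exponents in $y_1$ carefully, since that is where both hypotheses enter. The $X$-integration runs over the bounded set $x_1^2 + x_2^2 + |q(X_D)| < t^2$ and so yields a finite constant, reducing the claim to
$$\int_{\calR_t} y_1^{p_1} y_2^{p_2} q(Y)^{p - l} \, \mathrm{d}Y < \infty.$$
I then use the two-sided bound $y_1 y_2 /(1 + t^4) < q(Y) < y_1 y_2$ recorded in Lemma \ref{lem:SiegelDomainInequality} to replace $q(Y)^{p - l}$ by $(y_1 y_2)^{p - l}$ up to a multiplicative constant depending only on $t$, $p$, $l$ (the sign of $p-l$ only decides which of the two bounds is invoked).

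Next I integrate $Y_D \in D \otimes \IR$ at fixed $y_1$. Since $D$ is negative definite of rank $l - 2$, the condition $|q(Y_D)| < t^2 y_1^2$ cuts out a Euclidean ball of radius of order $y_1$, whose volume is $\ll y_1^{l-2}$. Combined with the bound from the previous step, this reduces the question to
$$\int y_1^{p_1 + p - 2} \, y_2^{p_2 + p - l} \, \mathrm{d}y_1 \mathrm{d}y_2,$$
taken over the region $y_1 > 1/t$ and $y_2 > y_1/t^2$, where the lower bound on $y_2$ comes from combining $y_1^2 < t^2 q(Y)$ with $q(Y) < y_1 y_2$.

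Finally, the inner $y_2$-integral over $(y_1/t^2, \infty)$ converges iff $p_2 + p - l < -1$, i.e.\ $p_2 + p < l - 1$, and produces a factor of order $y_1^{p_2 + p - l + 1}$. The remaining $y_1$-integral over $(1/t, \infty)$ then carries exponent $p_1 + p_2 + 2p - l - 1$, which is $< -1$ precisely under the second hypothesis $p_1 + p_2 + 2p < l$; both hypotheses are thus used, and sharply.

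The only mildly subtle bookkeeping, and the place where I would be most careful, is the cancellation of the $y_1^{l-2}$ factor produced by the $Y_D$-ball against the negative powers of $y_1$ hiding inside $q(Y)^{p-l}$, together with the observation that even when $Y_D$ is allowed to saturate $|q(Y_D)| = t^2 y_1^2$, the forced lower bound on $y_2$ is still of order $y_1$ (not smaller), so the $y_2$-integration really does output $y_1^{p_2+p-l+1}$ and not a larger power. Once these two bookkeeping points are settled, the rest is iterated one-dimensional integration of a power weight.
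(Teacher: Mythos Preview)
Your argument is correct. The paper does not actually supply a proof of this lemma; it only remarks that the statement is a slight generalisation of \cite[Lemma 4.11, 4.12]{BruinierChern} and leaves it at that. Your iterated integration in the order $X$, $Y_D$, $y_2$, $y_1$, together with the two-sided bound $y_1 y_2/(1+t^4) < q(Y) < y_1 y_2$, is exactly the standard argument used in that reference, and your bookkeeping of the exponents is accurate: the $Y_D$-ball contributes $y_1^{l-2}$, which combines with $(y_1 y_2)^{p-l}$ to give $y_1^{p_1+p-2} y_2^{p_2+p-l}$, and the two hypotheses then enter precisely where you indicate.
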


\section{Siegel Operator}

For an isotropic line $I \subseteq V(\IR)$ generated by some isotropic vector $\lambda \in I$ the corresponding point $[\lambda]$ is in the closure of $\calK^+$ in $\calN$. To see this, take two sequences $x_n, y_n \in V(\IR)$ with positive norm which are orthogonal and converge to $\lambda$. Then $[x_n + i y_n] \in \calK^+$ with limit $[(1 + i)\lambda] = [\lambda] \in \calN$ for $n \to \infty$.

\begin{defn}
A boundary point of $\calK^+$ of the form $[\lambda] \in \calN$ which is represented by a real isotropic line is called \emph{special boundary point}. A set consisting of one special boundary points is called \emph{$0$-dimensional boundary component}. A non-special boundary point is called \emph{generic boundary point}.
\end{defn}

Let now $I \subseteq V(\IR)$ be an isotropic plane and consider the set of all boundary points which can be represented by elements of $I \otimes \IC$.

\begin{defn}
For an totally isotropic plane $I \subseteq V(\IR)$ the set of all generic boundary points which can be represented by an element of $I \otimes \IC$ is called \emph{$1$-dimensional boundary component} attached to $I$. By a \emph{boundary component} we mean a $0$-dimensional or $1$-dimensional boundary component.
\end{defn}

\begin{lem}
The $1$-dimensional boundary components are isomorphic to usual upper half-planes. Moreover, there is a bijective correspondence between boundary components and non-zero isotropic subspaces of $V(\IR)$.
\end{lem}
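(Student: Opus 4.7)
\bigskip
\noindent\textbf{Proof plan for the Lemma.}

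My plan is to treat the two assertions separately but in parallel, since both rest on the same dictionary between projective boundary points and real isotropic subspaces. For the first assertion I would fix a totally isotropic plane $I \subseteq V(\IR)$, pick a real basis $\lambda_1, \lambda_2 \in I$, and parametrize $I \otimes \IC$ by $(z_1, z_2) \mapsto z_1 \lambda_1 + z_2 \lambda_2$. Every such vector $w$ is automatically isotropic because $I$ is totally isotropic, so $[w]$ sits in the quadric $\calN \supseteq \calK^+$. The key observation is that $[w]$ is a special boundary point (i.e.\ the class of a real isotropic vector) if and only if $\IC w = \IC \bar w$, which translates into $z_1 \bar{z_2} - \bar{z_1} z_2 = 0$, i.e.\ $\tau := z_1 / z_2 \in \IR \cup \{\infty\}$. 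Hence the set of generic boundary points coming from $I \otimes \IC$ is in bijection with $\IC \setminus \IR$ via $[w] \mapsto \tau$; the choice of connected component $\calK^+$ singles out one half-plane (the one for which $[w]$ lies in the topological closure of $\calK^+$), giving the required biholomorphism with $\IH$.

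For the second assertion I would split the correspondence by dimension. The $0$-dimensional side is tautological: by definition a $0$-dimensional boundary component is a singleton $\{[\lambda]\}$ with $\lambda \in V(\IR)$ real isotropic and non-zero, and $[\lambda]$ determines the real line $\IR \lambda$ and vice versa. The $1$-dimensional side amounts to showing injectivity of the map $I \mapsto \{\text{boundary component of } I\}$: given any point $[w]$ in the $1$-dimensional component attached to $I$, the proof of the first part already shows that $w$ and $\bar w$ are $\IC$-linearly independent, hence they span $I \otimes \IC$, and therefore $I = \IR w + \IR \bar w$ can be reconstructed from a single point of the component. Finally, since $V$ has signature $(2,l)$ its Witt index (over $\IR$) is exactly $2$, so every non-zero isotropic subspace of $V(\IR)$ is either a line or a totally isotropic plane, and both cases are exhausted by the construction. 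Together this gives the claimed bijection.

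The main technical point I expect to have to be careful about is the choice of half-plane in the first step: one has to verify that exactly one of the two connected components of $\IC \setminus \IR$ corresponds to boundary points in $\overline{\calK^+}$ and not in $\overline{\calK^+ \circ \text{conj}}$. Concretely, after normalizing $z_2 = 1$ and writing $w = \tau \lambda_1 + \lambda_2$, one approaches $[w]$ from inside $\calK^+$ by perturbations $w_\varepsilon = w + \varepsilon v$ with $v \in V(\IR)$ chosen so that $(w_\varepsilon, \overline{w_\varepsilon}) > 0$; the sign of $\Im(\tau)$ that makes this possible within the fixed component $\calK^+$ selects the upper half-plane. Once this is pinned down, the resulting map $\tau \mapsto [w]$ is visibly holomorphic, so the identification with $\IH$ is automatic. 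All remaining verifications are routine.
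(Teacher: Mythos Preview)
Your overall strategy is sound and arrives at the same conclusion as the paper, but the route is genuinely different. The paper chooses a Witt basis $z, d$ of $I$ together with dual isotropic vectors $\tilde z, \tilde d$ and works in explicit coordinates $(z_1,z_2,z_3,z_4)$ on the span $\langle \tilde z, z, \tilde d, d\rangle$. It then writes down a concrete holomorphic embedding $\IH \times \IH \to \calK^+$, $(\tau_1,\tau_2)\mapsto [1,-\tau_1\tau_2,\tau_1,\tau_2]$, and obtains the boundary points as the limit $\tau_2 = it \to i\infty$, which immediately shows both that the limit lies in $\overline{\calK^+}$ and that the condition for this is $\Im(\tau) > 0$. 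Your argument is more intrinsic: you never leave $I\otimes\IC$, identify special versus generic points by the reality condition $z_1\bar z_2 - \bar z_1 z_2 = 0$, and recover $I$ from a single generic point as $\IR w + \IR \bar w$. This is cleaner conceptually and makes the bijection with isotropic subspaces more transparent; the paper's computation, on the other hand, settles the half-plane selection with no extra work.

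That half-plane step is where your plan has a real gap. Your proposed perturbation $w_\varepsilon = w + \varepsilon v$ with $v \in V(\IR)$ will not in general satisfy $(w_\varepsilon, w_\varepsilon) = 0$: since $(w,w)=0$ you get $(w_\varepsilon,w_\varepsilon) = 2\varepsilon(w,v) + \varepsilon^2(v,v)$, which forces $(w,v)=0$ at first order and then $(v,v)=0$, so $v$ would have to lie in $I^\perp$ and be isotropic, which does not help produce points inside $\calK^+$. In other words, a linear real perturbation cannot stay on the quadric and enter the open domain. The fix is exactly what the paper does: extend your basis of $I$ to a hyperbolic basis of a $4$-dimensional subspace, parametrize a copy of $\IH\times\IH$ inside $\calK^+$, and exhibit $[w]$ as a limit along one $\IH$-factor. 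Once you have that, the sign of $\Im(\tau)$ determining membership in $\overline{\calK^+}$ drops out of the explicit formula, and the rest of your argument (the reconstruction $I = \IR w + \IR\bar w$ and the Witt-index count) goes through unchanged.
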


\begin{proof}
Let $I \subseteq V(\IR)$ be an isotropic plane. Take a basis $z, d$ of $I$ and consider $\tilde{z}, \tilde{d}$ isotropic such that $(z, \tilde{z}) = (d, \tilde{d}) = 1$ and all other products vanish. We will use the shorthand notation $(z_1, z_2, z_3, z_4)$ for $z_1 \tilde{z} + z_2 z + z_3 \tilde{d} + z_4 d$. Then the elements of $I$ have the form $(0, z_2, 0, z_4)$. Assume that this is not a multiple of a real point, i.e. it is not a special boundary points. Then $z_2 \neq 0 \neq z_4$ and we can normalize it such that $z_4 = 1$, i.e. we have a point of the form $(0, \tau, 0, 1)$ for some $\tau \in \IC \setminus \IR$. making suitable choices of the basis and $\calK^+$ we can assume that $[1, 1, i, i] \in \calK^+$. Then we have an embedding
$$\IH \times \IH \to \calK^+, \quad (\tau_1, \tau_2) \mapsto [1, - \tau_1 \tau_2, \tau_1, \tau_2].$$
In the projective space we have
$$\lim_{t \to \infty} [1, - \tau it, \tau, it] = \lim_{t \to \infty} \left[\frac{1}{it}, -\tau, \frac{\tau}{it}, 1\right] = [0, -\tau, 0, 1].$$
Thus the point $[0, -\tau, 0, 1]$ is in the boundary of $\calK^+$ if and only if $v = \Im(\tau) > 0$. The set of all boundary points represented by elements in $I \otimes \IC$ can be identified with $\IH \cup \IR \cup \infty$. In particular, if we let $\tau = iv$ with $v \to \infty$, we obtain the special boundary point $[z]$.
\end{proof}

\begin{defn}
A boundary component is called \emph{rational boundary component} if the corresponding isotropic subspace is defined over $\IQ$. Write $\IH_l^*$ for the union of $\IH_l \simeq \calK^+$ with all rational boundary components. Then the rational orthogonal group $O^+(V) := O^+(V(\IR)) \cap O(V)$ acts on $\IH_l^*$.
\end{defn}

By the theory of Baily-Borel (see \cite{BailyBorel}, \cite{BorelJi}), there is a topology on $\IH_l^*$ such that for congruence subgroups $\Gamma \subseteq O^+(V)$ the quotient $X_\Gamma = \IH_l^* / \Gamma$ carries the structure of a projective variety, which contains $\IH_l / \Gamma$ as a Zariski open subvariety. 

Let now $I \subseteq L \otimes \IQ$ be an isotropic plane. Let $z \in L \cap I$ be primitive and $z' \in L$ with $(z, z') = 1$. Let $K = L \cap z^\perp \cap z'^\perp$ and take $d \in K \cap I$ primitive, $d' \in K'$ with $(d, d') = 1$ and let $D = K \cap d^\perp \cap d'^\perp$. As before, write $\tilde{z} = z' - q(z') z, \tilde{d} = d' - q(d') d$ and let $d_3, \ldots, d_l$ be a basis of $D$. Then we obviously have $I = \langle z, d \rangle$. Recall the orthogonal upper half plane corresponding to $z$
$$\IH_l := \{Z = X + iY \in K \otimes \IC \mid q(Y) > 0, (Y, d) > 0 \}$$
and that we write $Z = z_1 \tilde{d} + z_2 d + Z_D$. Let $C \subseteq K \otimes \IR$ be the positive cone such that
$$\IH_l = K \otimes \IR + i C$$
and write $\overline{C}$ for its closure. For $\tau \in \IH$ and $t \in \IR_{>0}$ we have $\tau \tilde{d} + it d \in \IH_l$, which corresponds to $[\tilde{z} - \tau it z + \tau \tilde{d} + it d] \in \calK^+$. For a function $F : \IH_l \to \IC$ we define the Siegel operator corresponding to the boundary component $I$ as (if it exists)
$$F \vert_I : \IH \to \IC, \quad \tau \mapsto \lim_{t \to \infty} F(\tau \tilde{d} + it d).$$
For $\lambda = (\lambda_1, \lambda_2, \lambda_D) \in K'$ we have
$$(\lambda, \tau \tilde{d} + itd) = it \lambda_1 + \tau \lambda_2.$$
Assume that $F$ has a Fourier expansion of the form
$$F(Z) = \sum_{\substack{\lambda \in K' \\ \lambda \in \overline{C}}} a(\lambda) e(\lambda, Z),$$
then the Siegel operator exists and we have
\begin{align*}
F \vert_I(\tau)
&= \lim_{t \to \infty} \sum_{\substack{(\lambda_1, \lambda_2, \lambda_D) \in K' \\ (\lambda_1, \lambda_2, \lambda_D) \in \overline{C}}} a(\lambda_1, \lambda_2, \lambda_D) e(\lambda_1 \tilde{d} + \lambda_2 d + \lambda_D, \tau d' + itd) \\
&= \lim_{t \to \infty} \sum_{\substack{(\lambda_1, \lambda_2, \lambda_D) \in K' \\ (\lambda_1, \lambda_2, \lambda_D) \in \overline{C}}} a(\lambda_1, \lambda_2, \lambda_D) \exp(-2 \pi t \lambda_1) e(\lambda_2 \tau) \\
&= \sum_{\substack{(0, \lambda_2, 0) \in K' \\ \lambda_2 \geq 0}} a(0, \lambda_2, 0) e(\lambda_2 \tau) \\
&= \sum_{m = 0}^\infty a(0, m / N_d, 0) e(m \tau / N_d).
\end{align*}
The value in the $0$-dimensional cusp $z$ is given by the constant term of the Fourier expansion, i.e.
$$a(0) = \lim_{t \to \infty} F(t Z)$$
for arbitrary $Z \in \IH_l$.

Let now $F : \tilde{\calK}^+ \to \IC$ be a holomorphic modular form of weight $\kappa$ for some congruence subgroup $\Gamma$. Then its restriction to the boundary component $F_z \vert_I : \IH \to \IC$ is a holomorphic modular form of weight $\kappa$ for some subgroup of $\SL_2(\IQ)$. One easily sees that for the constant Fourier coefficient we have $a_{-z}(0) = (-1)^\kappa a_z(0)$.

\begin{defn}
We call a holomorphic modular form $F : \IH_l \to \IC$ of weight $\kappa$ for some congruence subgroup $\Gamma$ a \emph{cusp form} if $F \vert_I$ vanishes identically for all boundary components $I$. We say that $F$ is an \emph{Eisenstein series on the boundary} if $F \vert_I$ is an Eisenstein series for all boundary components $I$. We write $\S_\kappa(\Gamma)$ for the space of cusp forms and $\M_\kappa^{\partial \Eis}(\Gamma)$ for the space of holomorphic modular forms that are Eisenstein series on the boundary. In particular we have $\S_\kappa(\Gamma) \subseteq \M_\kappa^{\partial \Eis}(\Gamma)$.
\end{defn}

\begin{rem}
By the theory of singular weights, there are no cusp forms of singular weight. In particular, a holomorphic modular form of singular weight is completely determined by its values at the boundary and a holomorphic modular form of singular weight that is an Eisenstein series on the boundary is completely determined by its values in the $0$-dimensional cusps.
\end{rem}

\begin{rem}
If a holomorphic modular form $F$ is an Eisenstein series on the boundary, then its restrictions to the boundary are fully determined by the values in the $0$-dimensional cusps, i.e. the constant Fourier coefficients.
\end{rem}

\begin{defn}
For $\Gamma = \Gamma(L)$ recall the map
$$\pi_L : \{\text{$0$-dimensional cusps of } \Gamma(L) \bs \IH_l \} \simeq \Gamma(L) \bs \Iso_0(L')  \to \Iso(L' / L).$$
We write $\M_\kappa^{\pi}(\Gamma(L))$ for the subspace of $\M_\kappa^{\partial \Eis}(\Gamma(L))$ that consists of holomorphic modular forms whose values in the $0$-dimensional cusps only depend on their image in $L' / L$. In particular, if $\delta = -\delta \in L' / L$, the value in the $0$-dimensional cusps corresponding to $\delta$ vanish if $\kappa$ is odd. We have $\S_\kappa(\Gamma(L)) \subseteq \M_\kappa^\pi(\Gamma(L))$ and if $\pi_L$ is injective we have $\M_\kappa^\pi(\Gamma(L)) = \M_\kappa^{\partial \Eis}(\Gamma(L))$.
\end{defn}

We want to mention the following result which is sometimes called Eichler criterion.

\begin{lem}[{\cite[Lemma 4.4]{FreitagHermann}}]
If $L$ splits two hyperbolic planes, then $\pi_L$ is bijective.
\end{lem}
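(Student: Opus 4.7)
The map $\pi_L$ is well-defined: $\Gamma(L)$ acts trivially on $L'/L$ by definition, and $q(v) = 0$ in $\IQ$ forces $q(v + L) = 0$ in $\IQ/\IZ$, so a $\Gamma(L)$-orbit of a primitive isotropic $v \in L'$ has a well-defined image in $\Iso(L'/L)$. For surjectivity I would use the hypothesis to write $L = U_1 \oplus U_2 \oplus M$, with each $U_i = \langle e_i, f_i \rangle$ a hyperbolic plane, so that $L' = U_1 \oplus U_2 \oplus M'$ and $L'/L \cong M'/M$. Given $\mu \in \Iso(L'/L)$, I pick any lift $\tilde{\mu} \in M'$, which automatically satisfies $q(\tilde{\mu}) \in \IZ$, and form
$$v_\mu := e_1 + \tilde{\mu} - q(\tilde{\mu}) f_1 \in L'.$$
A direct check shows that $v_\mu$ is isotropic, is primitive (its $e_1$-coefficient equals $1$), and projects to $\mu$, which gives surjectivity.

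For injectivity I plan to prove the sharper statement that every primitive isotropic $v \in L'$ with $v + L = \mu$ is $\Gamma(L)$-equivalent to $v_\mu$; applied to two such vectors $v, v'$ with the same image $\mu$, composing the transformations taking each to $v_\mu$ yields the desired $\sigma \in \Gamma(L)$ with $\sigma v = v'$. The main tool is the Eichler transformation
$$E(u, w)(x) = x + (x, u) w - (x, w) u - q(w) (x, u) u$$
associated to an isotropic $u \in L$ and $w \in L$ with $(u, w) = 0$. Such $E(u, w)$ lies in $\Gamma(L)$, because for any $x \in L'$ the pairings $(x, u)$ and $(x, w)$ are integers, so $E(u, w)(x) - x \in L$. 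Writing $v = a_1 e_1 + b_1 f_1 + a_2 e_2 + b_2 f_2 + \lambda$ with $a_i, b_i \in \IZ$ and $\lambda \in M'$ a lift of $\mu$, the plan is to apply a succession of such transformations, using isotropic vectors from both hyperbolic planes, to eliminate the $U_2$-coefficients $a_2, b_2$, align $\lambda$ with the chosen lift $\tilde{\mu}$, and finally normalise the remaining $U_1$-coefficients using the isotropy relation $q(v) = 0$.

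The main obstacle is carrying out this reduction while controlling primitivity at every step, so that after the $U_2$- and $M$-reductions the remaining pair $(a_1, b_1)$ is coprime (a fact deduced from primitivity of $v$ in $L'$ once the other components are in canonical form); only then can a final combination of Eichler transformations involving $U_2$ (which, in the orthogonal setting, plays the role of an $\SL_2(\IZ)$-action on $U_1$) together with isotropy bring $(a_1, b_1)$ into the shape $(1, -q(\tilde{\mu}))$, identifying $v$ with $v_\mu$. This is essentially the classical proof of Eichler's criterion transcribed to the pair $(L, L')$, and the two-hyperbolic-plane hypothesis is indispensable because one hyperbolic plane alone does not provide enough isotropic vectors of $L$ to realise all the requisite transformations.
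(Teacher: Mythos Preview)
The paper does not actually supply a proof of this lemma: it merely quotes the result as \cite[Lemma 4.4]{FreitagHermann} (the classical Eichler criterion) and uses it as a black box. There is therefore nothing in the paper to compare your argument against.

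That said, your sketch is the standard proof of Eichler's criterion and is essentially correct. The surjectivity construction $v_\mu = e_1 + \tilde{\mu} - q(\tilde{\mu}) f_1$ is exactly the usual one, your formula for the Eichler transformation $E(u,w)$ is correct and one checks directly that it preserves the form and lies in $\Gamma(L)$, and the reduction strategy you outline (use $E$'s built from isotropic vectors of the two hyperbolic planes to kill the $U_2$-coordinates, adjust the $M'$-part to the chosen lift $\tilde{\mu}$, and then use the $\SL_2(\IZ)$-type action coming from the second hyperbolic plane to normalise the $U_1$-coefficients) is precisely how the proof in \cite{FreitagHermann} proceeds. The only point at which your write-up is genuinely incomplete rather than merely terse is the primitivity bookkeeping in the last step: you should make explicit why, after the $M'$-component has been brought to $\tilde{\mu}$ and the $U_2$-component to zero, primitivity of $v$ in $L'$ forces $\gcd(a_1,b_1)=1$ (this uses that $\tilde{\mu}$ may itself fail to be primitive in $M'$, so one must argue via the $e_1,f_1$-coefficients), and then exhibit the concrete product of Eichler transformations realising the $\SL_2(\IZ)$-move on $(a_1,b_1)$.
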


\section{Differential Operators}\label{sec:WeightLaplace}

Let $M$ be a hermitian manifold and $E$ a holomorphic vector bundle. We write $\calE(E) = \calE(M, E)$ for the space of (global) sections of $E$ and, more generally, for $U \subseteq M$ open, we write $\calE(U, E)$ for the space of sections over $U$. Assume now that $E$ carries an hermitian metric. Then the hermitian metric on $E$ induces a hermitian metric on $E$-valued differential forms $\calA^*(M, E)$ locally given by
$$(\varphi \otimes \sigma, \varphi' \otimes \sigma') = (\varphi, \varphi') (\sigma, \sigma')$$
on decomposable forms, where the first factor is the hermitian metric on differential forms coming from the hermitian metric on $M$ and the second factor is the hermitian metric on the vector bundle $E$. Moreover, the hermitian metric on $E$ together with the Hodge-$*$-operator $* : \calA^*(M) \to \calA^*(M)$ induces a Hodge-$*$-operator
$$\overline{*}_E : \calA^*(M, E) \to \calA^*(M, E^*).$$
The evaluation map $E \otimes E^* \to \IC$ induces a wedge product $\wedge$ which satisfies
$$\alpha \wedge \overline{*}_E \beta = (\alpha, \beta) \vol,$$
where $\vol$ is the volume form induced from the hermitian metric on $M$. Moreover we have a hermitian inner product on $E$-valued differential forms given by
$$(\alpha, \beta)_2 = \int_M \alpha \wedge \overline{*}_E \beta = \int_M (\alpha, \beta) \vol.$$
Let $\overline{\partial}^*_E = \overline{*}_{E^*} \overline{\partial}_{E^*} \overline{*}_E$ and $\Delta_E = \overline{\partial}^*_E \overline{\partial}_E$ be the Laplace operator on sections of $E$. We will need the following

\begin{thm}[{\cite[Section 3, Example (B)]{Chernoff}}]\label{thm:LaplaceSelfAdjoint}
Let $M$ be a complete connected hermitian manifold and let $E$ be an hermitian vector bundle. If $u, v$ are smooth square integrable sections of $E$ such that $\Delta_E u, \Delta_E v$ are also square integrable, then
$$(\Delta_E u, v) = (u, \Delta_E v).$$
\end{thm}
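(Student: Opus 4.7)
The plan is to prove the identity via a cutoff approximation argument following the Gaffney--Chernoff template for essential self-adjointness of the Laplacian on complete manifolds. First I would exploit completeness of $M$ to produce a sequence of smooth cutoff functions $\chi_n : M \to [0,1]$ with compact support, satisfying $\chi_n \to 1$ pointwise and $\lvert d\chi_n \rvert \leq C/n$. Such cutoffs can be obtained by smoothing compositions of the distance function to a fixed basepoint with suitable piecewise linear bump functions; completeness is essential because it guarantees that metric balls are relatively compact, so that these truncations exhaust $M$.

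The second step is a Gaffney-type $L^2$-estimate on the first derivative: if $u$ is smooth with $u, \Delta_E u \in L^2$, then $\overline{\partial}_E u \in L^2$ as well. To establish this, I compute
\begin{align*}
\lVert \chi_n \overline{\partial}_E u \rVert_2^2
&= \int_M (\chi_n^2 \overline{\partial}_E u, \overline{\partial}_E u) \vol
= (\overline{\partial}_E^*(\chi_n^2 \overline{\partial}_E u), u)_2 \\
&= (\chi_n^2 \Delta_E u, u)_2 + ([\overline{\partial}_E^*, \chi_n^2] \overline{\partial}_E u, u)_2,
\end{align*}
where the commutator is a zeroth-order operator involving only $d\chi_n$, hence bounded by $\tfrac{2C}{n} \chi_n \lvert \overline{\partial}_E u \rvert \lvert u \rvert$ pointwise. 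Applying Cauchy--Schwarz to both terms and absorbing a small multiple of $\lVert \chi_n \overline{\partial}_E u \rVert_2^2$ into the left side yields a uniform bound on $\lVert \chi_n \overline{\partial}_E u \rVert_2$ in terms of $\lVert u \rVert_2$ and $\lVert \Delta_E u \rVert_2$. Monotone convergence then gives $\overline{\partial}_E u \in L^2$, and similarly for $v$.

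With that lemma in hand, the third step is to carry out Green's formula with the same cutoffs. I write
\begin{align*}
(\chi_n^2 \Delta_E u, v)_2
= (\overline{\partial}_E u, \overline{\partial}_{E}(\chi_n^2 v))_2
= (\chi_n \overline{\partial}_E u, \chi_n \overline{\partial}_E v)_2 + R_n(u,v),
\end{align*}
where the remainder $R_n(u,v)$ involves $d\chi_n$ paired with $\overline{\partial}_E u$ and $v$; by Cauchy--Schwarz and $\lvert d\chi_n \rvert \leq C/n$ it is bounded by $\tfrac{C'}{n} \lVert \overline{\partial}_E u \rVert_2 \lVert v \rVert_2$, which tends to zero. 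The same manipulation applied to $(u, \chi_n^2 \Delta_E v)_2$ produces the symmetric expression $(\chi_n \overline{\partial}_E u, \chi_n \overline{\partial}_E v)_2$ up to a remainder vanishing in the limit. Dominated convergence on the left-hand sides (using $u, v, \Delta_E u, \Delta_E v \in L^2$) together with monotone convergence in the middle term then yields
\begin{align*}
(\Delta_E u, v)_2 = (\overline{\partial}_E u, \overline{\partial}_E v)_2 = (u, \Delta_E v)_2.
\end{align*}

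The main obstacle is exactly the control of the commutator and remainder terms arising from $d\chi_n$: without completeness one cannot ensure that cutoffs with uniformly small gradients exhaust $M$, and the integration by parts would leave uncontrolled boundary contributions. Completeness enters precisely to supply such cutoffs, and the square-integrability hypotheses on $u, v, \Delta_E u, \Delta_E v$ are exactly what is needed for Cauchy--Schwarz to kill the error terms in the limit.
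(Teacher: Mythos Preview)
The paper does not prove this theorem; it is quoted as a known result with a citation to Chernoff, \textit{Essential self-adjointness of powers of generators of hyperbolic equations}. So there is no proof in the paper to compare your proposal against.

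Your argument is the classical Gaffney cutoff approach and is correct in outline: completeness furnishes compactly supported cutoffs with uniformly small gradient, the commutator $[\overline{\partial}_E^*, \chi_n^2]$ is zeroth order in $\overline{\partial}_E u$ with coefficient controlled by $d\chi_n$, and this yields first the a priori bound $\overline{\partial}_E u \in L^2$ and then the symmetry identity. It is worth noting, though, that Chernoff's own method in the cited reference is genuinely different: rather than cutoff functions, he exploits finite propagation speed for the associated hyperbolic equation $\partial_t^2 w + \Delta_E w = 0$ to show that the wave group preserves compactly supported data, from which essential self-adjointness of $\Delta_E$ (and indeed of all its powers) follows. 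Your Gaffney-style proof is more elementary and entirely adequate for the statement as used in the paper; Chernoff's hyperbolic method buys a stronger conclusion (self-adjointness of $\Delta_E^k$ for all $k$) that is not needed here.
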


Let now $L$ be an even lattice of signature $(2, l)$. For $z \in \Iso_0(L)$ and $z' \in L'$ with $(z, z') = 1$ write $K = L \cap z^\perp \cap z'^\perp$. Let $b_1, \ldots, b_l$ be a basis of $K \otimes \IR$ such that
$$q(y_1 b_1 + \ldots + y_l b_l) = y_1^2 - y_2^2 - \ldots \ldots y_l^2.$$
If $Z = z_1 b_1 + z_2 b_2 + z_3 b_3 + \ldots z_l b_l \in K \otimes \IC$, we write $Z = (z_1, \ldots, z_l)$ and similarly $X = (x_1, \ldots x_l), Y = (y_1, \ldots, y_l)$ if $Z = X + i Y$ with $X, Y \in K \otimes \IR$. Denote by $\IH_l = K \otimes \IR + i C$ the corresponding tube domain model, where
$$C = \{Y = (y_1, \ldots, y_l) \in K_z \otimes \IR \mid y_1 > 0, q(Y) > 0\}.$$
The K\"ahler form is given by
$$w = -\frac{i}{2} \partial \overline{\partial} \log(q(Y))) = \frac{i}{2} \sum_{i, j} h_{ij}(Z) \mathrm{d}z_i \wedge \mathrm{d}\overline{z}_j,$$
where $h(Z) = h(Y) = (h_{ij})_{1 \leq i, j \leq l}$ is the associated hermitian form given by
\begin{align*}
\frac{1}{q(Y)^2}
\begin{pmatrix}
y_1^2 & -y_1 y_2 & -y_1 y_3 & \hdots & -y_1 y_l \\
-y_1 y_2 & y_2^2 & y_2 y_3 & \hdots & y_2 y_l \\
-y_1 y_3 & y_2 y_3 & y_3^2 & \ddots & \vdots \\
\vdots & \vdots & \ddots & \ddots & y_{l-1} y_l \\
-y_1 y_l & y_2 y_l & \hdots & y_{l-1} y_l & y_l^2
\end{pmatrix}
+ \frac{1}{2q(Y)}
\begin{pmatrix} -1 &  &  & \\
& 1 &  &  &  \\
&  & \ddots & &  \\
&  &  & \ddots &  \\
&  &  &  & 1
\end{pmatrix}
\end{align*}
and its inverse is
\begin{align*}
h^{-1}(Y) = 4
\begin{pmatrix}
y_1^2 & y_1 y_2 & y_1 y_3 & \hdots & y_1 y_l \\
y_1 y_2 & y_2^2 & y_2 y_3 & \hdots & y_2 y_l \\
y_1 y_3 & y_2 y_3 & y_3^2 & \ddots & \vdots \\
\vdots & \vdots & \ddots & \ddots & y_{l-1} y_l \\
y_1 y_l & y_2 y_l & \hdots & y_{l-1} y_l & y_l^2
\end{pmatrix}
+ 2 q(Y)
\begin{pmatrix} -1 &  &  & \\
& 1 &  &  &  \\
&  & \ddots & &  \\
&  &  & \ddots &  \\
&  &  &  & 1
\end{pmatrix}.
\end{align*}
Its determinant is $\det(h) = \frac{1}{2^{l} q(Y)^l}$ and hence the volume form is given by
\begin{align*}
\omega_g
&= \frac{1}{(4i q(Y))^l} \mathrm{d}z_1 \wedge \mathrm{d}\overline{z}_1 \wedge \ldots \wedge \mathrm{d}z_l \wedge \mathrm{d}\overline{z}_l \\
&= \frac{1}{2^l q(Y)^{l}} \mathrm{d}x_1 \wedge \mathrm{d}y_1 \wedge \ldots \wedge \mathrm{d}x_l \wedge \mathrm{d}y_l.
\end{align*}
We write
$$\widehat{\mathrm{d}z_i} = \mathrm{d}z_1 \wedge \mathrm{d}\overline{z}_1 \wedge \ldots \wedge \mathrm{d}\overline{z_{i-1}} \wedge \mathrm{d}\overline{z}_i \wedge \ldots \wedge \mathrm{d}z_l \wedge \mathrm{d}\overline{z}_l$$
and similarly for $\widehat{\mathrm{d}\overline{z}_i}$. Then we have
$$\mathrm{d}z_i \wedge \widehat{\mathrm{d}z_i} = (4i q(Y))^l \omega_g, \quad \mathrm{d}\overline{z}_i \wedge \widehat{\mathrm{d}\overline{z}_i} = -(4i q(Y))^l \omega_g.$$
Now the Hodge-$\overline{*}$-operator is defined by the equality
$$\alpha \wedge \overline{*} \beta = \langle \alpha, \beta \rangle \omega_g,$$
and thus we have
$$\overline{*} \mathrm{d}\overline{z}_i = -\frac{1}{2} \sum_{j = 1}^l \frac{h^{ji}(Y)}{(4 i q(Y))^l} \widehat{\mathrm{d}\overline{z}_j}.$$
Modular forms of weight $\kappa$ form a line bundle $\calL_\kappa$ which carries a hermitian metric given by the Petersson metric, i.e. $F(Z) \overline{G(Z)} q(Y)^\kappa$ on the fiber. The dual bundle $\calL_\kappa^*$ can be identified using the hermitian metric with the line bundle $\calL_{-\kappa}$ of modular forms of weight $-\kappa$ and the mapping $F(Z) \mapsto q(Y)^\kappa \overline{F(Z)}$ defines an anti-linear bundle isomorphism. This gives us the Hodge-$\overline{*}$-operator
$$\overline{*}_{\kappa} : \calA^{p, q}(\Gamma \bs \IH_l, \calL_\kappa) \to \calA^{n - p, n - q}(\Gamma \bs \IH_l, \calL_{-\kappa}), \quad \overline{*}_{\kappa}(\phi \otimes F) := (\overline{*} \phi) \otimes (q(Y)^\kappa \overline{F}).$$
The weight $\kappa$ Laplace operator is then given by
$$\Omega_\kappa = \overline{*}_{-\kappa} \overline{\partial} \overline{*}_\kappa \overline{\partial}.$$

\begin{thm}
For a modular form $F$ of weight $\kappa$ the weight $\kappa$ Laplace operator is given by
\begin{align*}
\Omega_\kappa F(Z)
&= \frac{q(Y)^{l-\kappa}}{2} \sum_{j = 1}^l \sum_{i = 1}^l \frac{\partial}{\partial z_j}\left(h^{ji}(Y) q(Y)^{\kappa - l} \frac{\partial F(Z)}{\partial \overline{z}_i} \right) \\
&= 2 \sum_{j = 1}^l \sum_{i = 1}^l y_i y_j \frac{\partial^2 F(Z)}{\partial z_j \partial \overline{z}_i} - q(Y) \left( \frac{\partial^2 F(Z)}{\partial z_1 \partial \overline{z}_1} - \sum_{i = 2}^l \frac{\partial^2 F(Z)}{\partial z_i \partial \overline{z}_i} \right) - i \kappa \sum_{i = 1}^l y_i \frac{\partial F(Z)}{\partial \overline{z}_i}.
\end{align*}
\end{thm}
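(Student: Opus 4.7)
The plan is to unwind the definition $\Omega_\kappa = \overline{*}_{-\kappa} \overline{\partial} \overline{*}_\kappa \overline{\partial}$ step by step in the local coordinates $z_1, \ldots, z_l$ on $\IH_l$ coming from the basis $b_1, \ldots, b_l$ of $K$, using the trivialisations of $\calL_{\pm \kappa}$ in which a section is just a function. In this trivialisation one has $\overline{\partial} F = \sum_i \frac{\partial F}{\partial \overline{z}_i} \mathrm{d}\overline{z}_i$. Applying $\overline{*}_\kappa$ term-by-term via the given formula for $\overline{*} \mathrm{d}\overline{z}_i$ and the antilinear identification $\calL_\kappa^* \simeq \calL_{-\kappa}$, $F \mapsto q(Y)^\kappa \overline{F}$, produces
$$\overline{*}_\kappa \overline{\partial} F = -\frac{q(Y)^{\kappa-l}}{2(4i)^l} \sum_j \widehat{\mathrm{d}\overline{z}_j} \sum_i h^{ji}(Y) \,\overline{\frac{\partial F}{\partial \overline{z}_i}}.$$

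Next I would apply $\overline{\partial}$ in the trivialisation of $\calL_{-\kappa}$, which again is just the componentwise antiholomorphic derivative on the coefficient function. Of the wedge products $\mathrm{d}\overline{z}_k \wedge \widehat{\mathrm{d}\overline{z}_j}$ only $k = j$ survives, since $\widehat{\mathrm{d}\overline{z}_j}$ already contains every other $\mathrm{d}\overline{z}_k$; using $\mathrm{d}\overline{z}_j \wedge \widehat{\mathrm{d}\overline{z}_j} = -(4iq(Y))^l \omega_g$ collapses the expression to a scalar multiple of $\omega_g$. A final application of $\overline{*}_{-\kappa}$, which sends $\omega_g \mapsto 1$ and conjugates the coefficient with factor $q(Y)^{-\kappa}$, removes the outer conjugation on $\partial F / \partial \overline{z}_i$ since $q(Y)$ and $h^{ji}(Y)$ are real and $\overline{\partial / \partial \overline{z}_j} = \partial / \partial z_j$. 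This yields the first displayed formula.

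For the second equality I would expand the outer $\partial / \partial z_j$ by the product rule into one term with second derivatives of $F$ and two terms with first derivatives. Inserting $h^{ji}(Y) = 4 y_i y_j + 2 q(Y) \epsilon_i \delta_{ij}$ with $\epsilon_1 = -1$ and $\epsilon_i = 1$ for $i \geq 2$, the second-derivative piece collapses directly to the first two summands of the claim. For the first-derivative pieces the short computations
$$\sum_{j=1}^l \frac{\partial h^{ji}(Y)}{\partial z_j} = -2il\,y_i, \qquad \sum_{j=1}^l \frac{\partial q(Y)}{\partial z_j} h^{ji}(Y) = -2i q(Y) y_i$$
combine so that the factor $l$ coming from the trace cancels against the $-l$ inside $\kappa - l$, leaving precisely $-i\kappa \sum_i y_i \partial F/\partial \overline{z}_i$. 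The main obstacle is this last bookkeeping: the split signature of $K$ enters through the signs $\epsilon_i$ and must be tracked consistently in both the quadratic second-derivative piece and the two first-derivative cancellations, so that the dependence on $l$ disappears and the weight-dependence simplifies from $\kappa - l$ to $\kappa$.
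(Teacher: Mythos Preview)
Your proposal is correct and follows essentially the same route as the paper: unwind $\overline{*}_{-\kappa}\overline{\partial}\,\overline{*}_\kappa\overline{\partial}$ in coordinates to get the first formula, then expand via the product rule. The only cosmetic difference is that the paper computes $\sum_j \frac{\partial}{\partial z_j}\bigl(h^{ij}(Y)q(Y)^{\kappa-l}\bigr)$ directly, treating the cases $i=1$ and $i>1$ separately, whereas you split it into $\sum_j \partial_{z_j} h^{ji}$ and $\sum_j (\partial_{z_j} q(Y)) h^{ji}$ using the sign $\epsilon_i$; the computations and the cancellation of $l$ are the same.
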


\begin{proof}
We have
\begin{align*}
\overline{*}_\kappa \overline{\partial} F(Z) &= \sum_{i = 1}^l q(Y)^k \frac{\partial \overline{F(Z)}}{\partial z_i} \overline{*} \mathrm{d}\overline{z}_i \\
&= -\frac{1}{2 (4i)^l} \sum_{j = 1}^l \sum_{i = 1}^l h^{ji}(Y) q(Y)^{\kappa - l} \frac{\partial \overline{F(Z)}}{\partial z_i} \widehat{\mathrm{d}\overline{z}_j}.
\end{align*}
Applying $\overline{*}_{-\kappa} \overline{\partial}$ yields
\begin{align*}
\Omega_\kappa F(Z)
&= -\frac{1}{2 (4i)^l} \sum_{j = 1}^l \sum_{i = 1}^l \overline{*}_{-\kappa} \overline{\partial} \left(h^{ji}(Y) q(Y)^{\kappa - l} \frac{\partial \overline{F(Z)}}{\partial z_i} \widehat{\mathrm{d}\overline{z}_j}\right) \\
&= -\frac{1}{2 (4i)^l} \sum_{j = 1}^l \sum_{i = 1}^l \overline{*}_{-\kappa} \frac{\partial}{\partial \overline{z}_j}\left(h^{ji}(Y) q(Y)^{\kappa - l} \frac{\partial \overline{F(Z)}}{\partial z_i} \right) \mathrm{d}\overline{z}_j \wedge \widehat{\mathrm{d}\overline{z}_j} \\
&= \frac{q(Y)^l}{2} \sum_{j = 1}^l \sum_{i = 1}^l \overline{*}_{-\kappa} \frac{\partial}{\partial \overline{z}_j}\left(h^{ji}(Y) q(Y)^{\kappa - l} \frac{\partial \overline{F(Z)}}{\partial z_i} \right) \omega_g \\
&= \frac{q(Y)^{l-\kappa}}{2} \sum_{j = 1}^l \sum_{i = 1}^l \frac{\partial}{\partial z_j}\left(h^{ji}(Y) q(Y)^{\kappa - l} \frac{\partial F(Z)}{\partial \overline{z}_i} \right) \omega_g \\
&= \frac{1}{2} \sum_{j = 1}^l \sum_{i = 1}^l h^{ij}(Y) \frac{\partial^2 F(Z)}{\partial z_j \partial \overline{z}_i} + \frac{q(Y)^{l - \kappa}}{2} \sum_{j = 1}^l \sum_{i = 1}^l \left(\frac{\partial}{\partial z_j} h^{ij}(Y) q(Y)^{\kappa - l}\right) \frac{\partial F(Z)}{\partial \overline{z}_i}.
\end{align*}
A short calculation yields
\begin{align*}
\frac{\partial q(Y)}{\partial y_1} = 2y_1, \quad \frac{\partial q(Y)}{\partial y_j} = -2y_j, j > 1, \quad \frac{\partial h^{ij}(Y)}{\partial y_1} = 4 y_i.
\end{align*}
Thus, for $i = 1$ we have
\begin{align*}
&\sum_{j = 1}^l \frac{\partial}{\partial z_j} h^{ij}(Y) q(Y)^{\kappa - l} \\
&= -\frac{i}{2} \sum_{j = 1}^l \left(q(Y)^{\kappa - l} \frac{\partial h^{ij}(Y)}{\partial y_j} + (\kappa - l) h^{ij}(Y) q(Y)^{\kappa - l - 1} \frac{\partial q(Y)}{\partial y_j} \right) \\
&= -\frac{i}{2} \left(4q(Y)^{\kappa - l} l y_1 + 2(\kappa - l) q(Y)^{\kappa - l - 1} \left((4 y_1^2 - 2 q(Y)) y_1 - 4 y_1 \sum_{j = 2}^l y_j^2\right) \right) \\
&= -\frac{i}{2} \left(4q(Y)^{\kappa - l} l y_1 + 2 y_1 (\kappa - l) q(Y)^{\kappa - l - 1} \left(4 y_1^2 - 2 q(Y) + 4 \sum_{j = 2}^l y_j^2\right) \right) \\
&= -2 i \kappa q(Y)^{\kappa - l} y_1
\end{align*}
and similarly for $i > 1$
\begin{align*}
&\sum_{j = 1}^l \frac{\partial}{\partial z_j} h^{ij}(Y) q(Y)^{\kappa - l} \\
&= -\frac{i}{2} \left(4q(Y)^{\kappa - l} l y_i + 2(\kappa - l) q(Y)^{\kappa - l - 1} \left(4 y_i y_1^2 - 4 y_i \sum_{j = 2}^l y_j^2 - 2q(Y) y_i\right) \right) \\
&= -2 i \kappa q(Y)^{\kappa - l} y_i
\end{align*}
and hence
\begin{align*}
\Omega_\kappa F(Z)
&= \frac{1}{2} \sum_{j = 1}^l \sum_{i = 1}^l h^{ij}(Y) \frac{\partial^2 F(Z)}{\partial z_j \partial \overline{z}_i} - i \kappa \sum_{i = 1}^l y_i \frac{\partial F(Z)}{\partial \overline{z}_i} \\
&= 2 \sum_{j = 1}^l \sum_{i = 1}^l y_i y_j \frac{\partial^2 F(Z)}{\partial z_j \partial \overline{z}_i} - q(Y) \left( \frac{\partial^2 F(Z)}{\partial z_1 \partial \overline{z}_1} - \sum_{i = 2}^l \frac{\partial^2 F(Z)}{\partial z_i \partial \overline{z}_i} \right) - i \kappa \sum_{i = 1}^l y_i \frac{\partial F(Z)}{\partial \overline{z}_i}.
\end{align*}
\end{proof}

\begin{rem}\label{rem:Shaul}
There is an ad hoc definition of the weight $(m, n)$ Laplace operator given by \cite{ShaulGrossKohnen}, \cite{ShaulWeightChanging} which coincides with $4 \Omega_\kappa$ for $m = \kappa, n = 0$. In particular $\Omega_\kappa$ commutes with the weight $\kappa$ slash operator and satisfies
$$\Omega_\kappa q(Y)^s = s (s + \kappa - l / 2) q(Y)^s.$$
\end{rem}

\section{Siegel Theta Function}\label{sec:SiegelthetaFunction}

Let $p$ be a polynomial on $\IR^{(b^+, b^-)}$ which is homogeneous of degree $\kappa$ in the positive definite variables and independent negative definite variables. For an isometry $\nu : L \otimes \IR \to \IR^{(b^+, b^-)}$ we write $\nu^+$ and $\nu^-$ for the inverse image of $\IR^{(b^+, 0)}$ and $\IR^{(0, b^-)}$. For an element $\lambda \in L \otimes \IR$ we write $\lambda_{\nu^\pm}$ for the projection of $\lambda$ onto $\nu^\pm$. The positive definite \emph{majorant} associated to $\nu$ is then given by $q_\nu(\lambda) = q(\lambda_{\nu^+}) - q(\lambda_{\nu^-})$. For $\gamma \in L' / L, \tau \in \IH$ and an isometry $\nu : L \otimes \IR \to \IR^{(b^+, b^-)}$ we define the \emph{Siegel theta function}
\begin{align*}
\theta_\gamma(\tau, \alpha, \beta, \nu, p)
&:= \sum_{\lambda \in \gamma + L} \exp\left(\frac{\Delta}{8 \pi v}\right)(p)(\nu(\lambda + \beta)) \\
&\times e(\tau q((\lambda + \beta)_{\nu^+}) + \overline{\tau} q((\lambda + \beta)_{\nu^-} - (\lambda + \beta / 2, \alpha))),
\end{align*}
where $\Delta$ is the usual \emph{Laplace operator} on $\IR^{b^+ + b^-}$ and $\alpha, \beta \in L \otimes \IR$. Moreover we define
\begin{align*}
\Theta_L(\tau, \alpha, \beta, \nu, p)
&:= \sum_{\gamma \in L' / L} \theta_\gamma(\tau, \alpha, \beta, \nu, p) \frake_\gamma.
\end{align*}
For $\alpha = \beta = 0$ we write
\begin{align*}
\theta_\gamma(\tau, \nu, p) := \theta_\gamma(\tau, 0, 0, \nu, p)
&= \sum_{\lambda \in \gamma + L} \exp\left(-\frac{\Delta}{8 \pi v}\right)(p)(\nu(\lambda)) e(\tau q(\lambda_{\nu^+}) + \overline{\tau} q(\lambda_{\nu^-}))
\end{align*}
and
\begin{align*}
\Theta_L(\tau, \nu, p) := \Theta_L(\tau, 0, 0, \nu, p)
&= \sum_{\gamma \in L' / L} \theta_\gamma(\tau, \nu, p) \frake_\gamma.
\end{align*}
Using Poisson summation one obtains

\begin{thm}[{\cite[Theorem 4.1]{Borcherds}}]
For $(M, \phi) \in \Mp_2(\IZ), M = \left(\begin{smallmatrix}a & b \\ c & d\end{smallmatrix}\right)$ we have
$$\Theta_L(M\tau, a \alpha + b \beta, c \alpha + d \beta, \nu, p) = \phi(\tau)^{b^+ + 2 \kappa^+} \overline{\phi(\tau)}^{b^-} \rho_L(M, \phi) \Theta_L(\tau, \alpha, \beta, \nu, p).$$
In particular, for $\alpha = \beta = 0$, the theta function $\Theta_L(\tau, \nu, p)$ has weight $(\frac{b^+}{2} + \kappa^+, \frac{b^-}{2})$.
\end{thm}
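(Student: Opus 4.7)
The plan is to verify the claimed transformation formula on the standard generators $T$ and $S$ of $\Mp_2(\IZ)$; since both sides of the identity satisfy the metaplectic cocycle relation in $(M, \phi)$, this reduction suffices.

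For the translation $T = \left(\left(\begin{smallmatrix}1 & 1 \\ 0 & 1\end{smallmatrix}\right), 1\right)$ one has $T\tau = \tau + 1$, $(a,b,c,d) = (1,1,0,1)$, and $\phi \equiv 1$, so the claim reduces to a direct comparison: the substitution $\tau \mapsto \tau + 1$, $\alpha \mapsto \alpha + \beta$, $\beta \mapsto \beta$ alters each summand of $\theta_\gamma(\tau, \alpha, \beta, \nu, p)$ by a controllable phase, and grouping the sum according to $\lambda \in \gamma + L$ produces exactly the factor $e(q(\gamma))$ corresponding to the $T$-action of the Weil representation on $\frake_\gamma$ (the even lattice condition $q(L) \subseteq \IZ$ and $2(L, L') \subseteq \IZ$ is essential here so that the individual summands in each coset share the same phase).

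The real work takes place at the inversion $S = \left(\left(\begin{smallmatrix}0 & -1 \\ 1 & 0\end{smallmatrix}\right), \sqrt{\tau}\right)$, where $S\tau = -1/\tau$ and $\phi(\tau) = \sqrt{\tau}$. Here one applies Poisson summation over $L$. Each summand is a Gaussian in $\lambda + \beta$ with complex parameter $\tau$ on $\nu^+$ and $\overline{\tau}$ on $\nu^-$, multiplied by the polynomial factor $\exp(\Delta/(8\pi v))(p)$. The operator $\exp(\Delta/(8\pi v))$ is built precisely as the Fourier-theoretic intertwiner for this combination: after taking the Fourier transform the full integrand is again of the same shape with parameter $-1/\tau$, with the rescaling in the $\nu^+$-directions contributing $\phi(\tau)^{b^+}$ from the Gaussian transform and an additional $\phi(\tau)^{2\kappa^+}$ coming from the homogeneity of $p$ of degree $\kappa^+$, while the $\nu^-$-directions contribute $\overline{\phi(\tau)}^{b^-}$. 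The Poisson-summed series is then indexed by $\lambda \in L'$; breaking it up by cosets modulo $L$, the arising character sum identifies the coefficient of $\frake_\mu$ with the Gauss-sum expression for $\rho_{\mu,\gamma}(S)$ appearing in Theorem \ref{thm:Shintani} with $c = 1$, and the change of variables $(\alpha,\beta) \mapsto (-\beta,\alpha)$ emerges naturally from the Fourier inversion.

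The main obstacle is the Poisson-summation step together with the heat-kernel-like identity for $\exp(\Delta/(8\pi v))$: one must track both the polynomial $p$ and the Gaussian through the Fourier transform and verify that the resulting factors assemble precisely into $\rho_L(S)\Theta_L$ with the stated $\phi$-powers, rather than into some twist. Once this transformation law is established, the final claim is immediate: setting $\alpha = \beta = 0$ on both sides gives
\[
\Theta_L(M\tau, \nu, p) = \phi(\tau)^{b^+ + 2\kappa^+} \overline{\phi(\tau)}^{b^-} \rho_L(M, \phi) \Theta_L(\tau, \nu, p),
\]
which, under the Petersson slash conventions, is exactly the assertion that $\Theta_L(\tau, \nu, p)$ has weight $(\tfrac{b^+}{2} + \kappa^+, \tfrac{b^-}{2})$ with respect to $\rho_L$.
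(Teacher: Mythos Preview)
Your proposal is correct and matches the paper's approach: the paper does not give a proof but simply writes ``Using Poisson summation one obtains'' and cites \cite[Theorem 4.1]{Borcherds}, and your outline (check on the generators $T$ and $S$, with the $S$-case handled by Poisson summation and the heat-operator compatibility of $\exp(\Delta/(8\pi v))$) is exactly the standard argument carried out in that reference.
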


Borcherds shows in \cite{Borcherds} that the theta function can be written as a Poincar\'e series. We will indicate the construction. Write $\Iso_0(L)$ for the primitive isotropic elements of $L$ and let $z \in \Iso_0(L), z' \in L'$ with $(z, z') = 1$. Let $N_z$ be the level of $z$ and define the lattice
$$K = L \cap z^\perp \cap z'^\perp.$$
Then $K$ has signature $(b^+ - 1, b^- - 1)$. For a vector $\lambda \in L \otimes \IR$ we write $\lambda_K$ for its orthogonal projection to $K \otimes \IR$, which is given by
$$\lambda_K = \lambda - (\lambda, z) z' + (\lambda, z)(z', z') z - (\lambda, z') z.$$
Let $\zeta \in L$ such that $(z, \zeta) = N_z$ and write
$$\zeta = \zeta_K + N_z z' + B z$$
for some $B \in \IQ$. Then we have
$$L = K \oplus \IZ \zeta + \IZ z.$$
Consider the sublattice
$$L_0' = \{ \lambda \in L' \vert (\lambda, z) \equiv 0 \bmod{N_z} \} \subseteq L'$$
and the projection
$$\pi : L_0' \to K', \lambda \mapsto \pi(\lambda) = \lambda_K + \frac{(\lambda, z)}{N_z} \zeta_K.$$
This projection induces a surjective map $L_0' / L \to K' / K$ which we also denote by $\pi$ and we have
$$L_0' / L = \{ \lambda \in L' / L \vert (\lambda, z) \equiv 0 \bmod{N_z} \}.$$
For an isometry $\nu : L \otimes \IR \to \IR^{(b^+, b^-)}$ we write $\omega^{\pm}$ for the orthogonal complement of $z_{\nu^\pm}$ in $\nu^\pm$. This yields a decomposition
$$L \otimes \IR = \omega^+ \oplus \IR z_{\nu^+} \oplus \omega^- \oplus \IR z_{\nu^-}$$
and for $\lambda \in L \otimes \IR$ we write $\lambda_{\omega^\pm}$ for the corresponding projections of $\lambda$ onto $\omega^\pm$. Additionally the map
$$\omega : L \otimes \IR \to \IR^{(b^+, b^-)}, \lambda \mapsto \nu(\lambda_{\omega^+} + \lambda_{\omega^-})$$
can be seen to be an isometry $K \otimes \IR \to \IR^{(b^+ - 1, b^- - 1)}$ by restriction. For a polynomial $p$ on $\IR^{(b^+, b^-)}$ as above we now define the homogeneous polynomials $p_{\omega, h}$ of degree $\kappa - h$ in the positive definite variables by
$$p(\nu(\lambda)) = \sum_{h} (\lambda, z_{\nu^+})^{h} p_{\omega, h}(\omega(\lambda)).$$
We have the following

\begin{thm}[{\cite[Theorem 5.2]{Borcherds}}]
Let $\mu = -z' + \frac{z_{\nu^+}}{2 z_{\nu^+}^2} + \frac{z_{\nu^-}}{2 z_{\nu^-}^2} \in L \otimes \IR$. Then
\begin{align*}
\theta_{\gamma + L}(\tau, \nu, p)
&= \frac{1}{\sqrt{2 v z_{\nu^+}^2}} \sum_{\substack{c, d \in \IZ \\ c \equiv (\gamma, z) \bmod N_z}} \sum_{h} (-2iv)^{-h} \\
&\times (c \overline{\tau} + d)^{h} e\left(-\frac{\lvert c \tau + d \rvert^2}{4 i v z_{\nu^+}^2} - (\gamma, z') d + q(z') cd \right) \\
&\times \theta_{K + \pi(\gamma - cz')}(\tau, d \mu_K, -c \mu_K, \omega, p_{\omega, h}).
\end{align*}
\end{thm}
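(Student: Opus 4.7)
The plan is to follow the standard Borcherds unfolding via Poisson summation along the isotropic direction, while carefully tracking the polynomial weight $p$. Using the decomposition $L = K \oplus \IZ \zeta + \IZ z$, every element of $\gamma + L$ may be written as
$$\lambda = \gamma + k + n\zeta + m z, \qquad k \in K,\; n,m \in \IZ,$$
with $(\lambda, z) = (\gamma, z) + n N_z$. Writing $c := (\lambda, z)$, the outer sum becomes a sum over $c \equiv (\gamma, z) \bmod N_z$, and for each $c$ an inner double sum over $k$ and $m$. I would first substitute $\zeta = \zeta_K + N_z z' + B z$ to express $\lambda_K$ (which is responsible for the eventual $K$-coset) and the $z'$-component of $\lambda$ explicitly in terms of $k,c,m$; the $K$-coset that appears is $\pi(\gamma - cz')$, which is exactly the shift obtained after removing the $z'$-contribution and projecting with $\pi$.

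Next I would handle the Gaussian in the exponential. Using the orthogonal splitting $L \otimes \IR = \omega^+ \oplus \IR z_{\nu^+} \oplus \omega^- \oplus \IR z_{\nu^-}$, the quadratic form $\tau q(\lambda_{\nu^+}) + \overline{\tau} q(\lambda_{\nu^-})$ separates into a quadratic form on $\omega^+ \oplus \omega^-$ (which only depends on $k$ and on the $K$-piece of $c\zeta$) and a quadratic form in the coordinates of $\lambda$ along $z_{\nu^\pm}$, which is a Gaussian in $m$ (and linear in $c$). A square completion in $m$, using $z_{\nu^+}^2$ and the specific $\mu = -z' + z_{\nu^+}/(2 z_{\nu^+}^2) + z_{\nu^-}/(2 z_{\nu^-}^2)$, produces the term $-|c\tau + d|^2 / (4 i v z_{\nu^+}^2)$ after Poisson summation and accounts for the cross term $q(z') c d$.

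For the polynomial weight I would use the given decomposition $p(\nu(\lambda)) = \sum_h (\lambda, z_{\nu^+})^h p_{\omega, h}(\omega(\lambda))$. Because $z_{\nu^+} \in \nu^+$ is orthogonal to $\omega^+$, the operator $\exp(\Delta/8\pi v)$ factors as a product of the analogous operator on $\omega^+ \oplus \omega^-$ (acting on $p_{\omega,h}$, which is what appears in the $K$-theta function) and a one-variable heat operator acting on the monomial $(\lambda, z_{\nu^+})^h$, producing a rescaled Hermite polynomial in the Gaussian variable $m$. I would then apply Poisson summation in $m$; the Fourier transform of a Hermite polynomial against a Gaussian is again a (rescaled) Hermite polynomial in the dual variable $d$, which combines with the Gaussian completion to yield precisely the factor $(-2 i v)^{-h} (c\overline{\tau} + d)^h$. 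The prefactor $(2 v z_{\nu^+}^2)^{-1/2}$ is the Jacobian of this Gaussian Fourier transform, and the twist by $e(-(\gamma, z') d + q(z') cd)$ arises from the linear-in-$m$ phase $-(\lambda, z')$-type corrections coming from the $z'$-shift in $\lambda_K$ and from the square completion.

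Finally, I would identify the resulting inner sum over $k$ as $\theta_{K + \pi(\gamma - c z')}(\tau, d\mu_K, -c\mu_K, \omega, p_{\omega, h})$: the shift characteristics $(d\mu_K, -c\mu_K)$ encode precisely the residual phase $e(-(\lambda + \beta/2, \alpha))$ from the definition of the twisted $K$-theta function, with the signs and the role of $\mu_K$ dictated by the choice of $\mu$. The main obstacle is step three: one must apply $\exp(\Delta/8\pi v)$ \emph{before} Poisson summation and keep careful track of how the Hermite-type polynomials transform, so that the prefactors $(-2iv)^{-h} (c\overline{\tau}+d)^h$ come out correctly and combine cleanly with the Gaussian completion; a secondary bookkeeping difficulty is the identification of the coset $\pi(\gamma - cz')$ from the $\zeta = \zeta_K + N_z z' + Bz$ decomposition, where the constant $B$ must disappear from the final formula.
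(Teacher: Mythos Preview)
The paper does not supply its own proof of this theorem; it merely cites \cite[Theorem 5.2]{Borcherds} and uses the result as a black box. Your sketch is an accurate outline of Borcherds' original argument: split $\gamma + L$ along $K \oplus \IZ \zeta \oplus \IZ z$, apply Poisson summation in the $z$-direction, separate the heat operator $\exp(\Delta / 8\pi v)$ according to the orthogonal decomposition $\omega^\pm \oplus \IR z_{\nu^\pm}$, and track the Hermite polynomial through the Fourier transform to get the factor $(-2iv)^{-h}(c\overline{\tau}+d)^h$. So there is nothing to compare against in the present paper, and your approach is correct and coincides with the cited source.
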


We will need the following

\begin{lem}
Let $\gamma \in K' / K$. Then
\begin{align*}
&\rho_L(M) \sum_{m \in \IZ / N_z \IZ} \frake_{\gamma + \frac{mz}{N_z}}\left(-\frac{mn}{N_z}\right) \\
&= (\rho_K(M) \frake_\gamma) \sum_{m \in \IZ / N_z \IZ} \frake_{\frac{mz}{N_z} - nc z'}\left(-\frac{amn}{N_z} + q(z') acn^2 \right).
\end{align*}
\end{lem}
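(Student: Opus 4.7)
My plan is to apply Shintani's formula (Theorem~\ref{thm:Shintani}) to the LHS and reorganize the resulting sum via the decomposition $L = K \oplus \IZ\zeta \oplus \IZ z$. Since the identity involves the specific entries $a, c$ of $M$ (not just its image in the group), I find it cleaner to argue directly than to check on the generators $T, S$ and then track how the matrix entries compose. The case $c = 0$ is immediate from the diagonal action $\rho_L(T^b)\frake_\beta = e(b\,q(\beta))\frake_\beta$ together with $q(\gamma + mz/N_z) = q(\gamma)$, since $z$ is isotropic and $(z, \gamma) = 0$ for $\gamma \in K'$. I therefore concentrate on the main case $c \neq 0$.

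Shintani gives
\begin{align*}
\rho_L(M)\frake_{\gamma + mz/N_z} &= C_M \sum_{\beta \in L'/L}\sum_{r \in L/cL} e\!\left(\tfrac{a\,q(\beta+r) - (\gamma + \tfrac{mz}{N_z},\, \beta + r) + d\,q(\gamma)}{c}\right) \frake_\beta,
\end{align*}
with $C_M = \sqrt{i}^{(b^- - b^+)\sgn c}/(\lvert c\rvert^{(b^++b^-)/2}\sqrt{\lvert L'/L\rvert})$, having used $q(\gamma + mz/N_z) = q(\gamma)$. I would decompose $r = r_K + s\zeta + tz$ with $r_K \in K/cK$ and $s, t \in \IZ/c\IZ$; the relations $(z, K) = 0$, $(z, \zeta) = N_z$, $(z, z) = 0$ give $(mz/N_z, r) = ms$ and $(mz/N_z, \beta) = m(z, \beta)/N_z$. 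Then multiplying by $e(-mn/N_z)$ and summing over $m \in \IZ/N_z\IZ$ triggers character orthogonality: the $m$-sum vanishes unless $(z, \beta) \equiv -nc \pmod{N_z}$, in which case it produces a factor $N_z$. Such $\beta$ admit the unique parametrization $\beta = \beta_K + m'z/N_z - ncz'$ with $\beta_K \in K'/K$ and $m' \in \IZ/N_z\IZ$; since $K \perp z, z'$, one gets $(\gamma, \beta) = (\gamma, \beta_K)$ and $q(\beta) = q(\beta_K) + n^2 c^2 q(z')$.

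Plugging this parametrization in, the $r_K$-sum matches Shintani's formula for $\rho_K(M)\frake_\gamma$ evaluated at $\beta_K$ (with shifted signature $(b^+ - 1, b^- - 1)$), while the auxiliary Gauss sums over $s, t \in \IZ/c\IZ$ absorb the extra factor of $\lvert c\rvert$ in $C_M$ compared with the corresponding normalization for $K$. The leftover pure $z, z'$-phase collapses to $e(-am'n/N_z + q(z')a c n^2)$, the quadratic correction $q(z') a c n^2$ arising as $a\,q(-ncz')/c = a c n^2 q(z')$. This yields exactly the RHS. The principal obstacle is the phase bookkeeping: one must carefully disentangle the mixed terms in $a q(\beta + r)/c$ and $(\gamma + mz/N_z, \beta + r)/c$ that couple $\beta_K, \zeta, z$ and $z'$, and verify that the $s, t$-Gauss sums evaluate to exactly the factors needed to cancel the normalization discrepancy without leaving spurious phases, the most delicate of these being the cross term $2(-ncz', s\zeta + tz)$ and its interaction with the character sum in $m'$.
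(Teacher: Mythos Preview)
Your overall strategy---apply Shintani, decompose $r$ along $K \oplus \IZ\zeta \oplus \IZ z$, and use character orthogonality to pin down $\beta$---is exactly the paper's approach. The gap is in the order of summation you describe. You claim that summing over $m \in \IZ/N_z\IZ$ first forces $(z,\beta) \equiv -nc \pmod{N_z}$ by orthogonality. But the $m$-dependent phase is
\[
e\!\left(-\frac{m\bigl((z,\beta)+sN_z+nc\bigr)}{cN_z}\right),
\]
which is \emph{not} a character of $\IZ/N_z\IZ$ unless $(z,\beta)+sN_z \equiv 0 \pmod c$; for generic $s$ the geometric sum over $m$ is neither $0$ nor $N_z$. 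So the $m$-sum alone does not give the clean dichotomy you assert, and the coupling to $s$ cannot be deferred to a later ``Gauss sum'' step.

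The paper resolves this by performing the $t$-sum first: since $t$ enters the phase only through $e\bigl(at((z,\beta)+sN_z)/c\bigr)$ (the cross term $(\gamma + mz/N_z, tz)$ vanishes), orthogonality over $t \in \IZ/c\IZ$ imposes $(z,\beta)+sN_z \equiv 0 \pmod c$ and contributes a factor $c$. \emph{After} this condition the $m$-phase becomes a genuine character of $\IZ/N_z\IZ$, and the $m$-sum then forces $(z,\beta)+sN_z+nc \equiv 0 \pmod{cN_z}$. Parametrising $\beta = \beta_K + m'z/N_z - ncz'$ as you do, one finds $(z,\beta) = -nc$ for this lift, and the congruence collapses to $sN_z \equiv 0 \pmod{cN_z}$, i.e.\ $s=0$. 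Thus there is no residual Gauss sum over $s$ at all: the $s$-sum reduces to the single term $s=0$, and what remains is precisely Shintani's formula for $\rho_K(M)\frake_\gamma$ times the claimed $z,z'$-phase. Your expectation of nontrivial $s,t$-Gauss sums producing the normalisation factor is therefore off; the extra $\lvert c\rvert$ in $C_L$ versus $C_K$ is absorbed by the factor $c$ from the $t$-sum, and the factor $N_z$ from the $m$-sum matches the $N_z$ in $\lvert L'/L\rvert = N_z^2 \lvert K'/K\rvert$.
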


\begin{proof}
Write $\lambda = \gamma + \frac{mz}{N_z}$ We first consider the case $M = \left(\begin{smallmatrix} a & b \\ c & d \end{smallmatrix}\right)$ with $c = 0$. Then by Shintani's formula \ref{thm:Shintani}
\begin{align*}
\rho_L(M) \frake_{\lambda} &= \sqrt{i}^{(b^- - b^+)(1 - \sgn(d))} \sum_{\beta \in L' / L} \delta_{\beta, a \lambda} \frake_\beta(ab q(\beta)) \\
&= \sqrt{i}^{(b^- - b^+)(1 - \sgn(d))} \frake_{a \lambda}(ab q(a\lambda)) \\
&= \sqrt{i}^{((b^- - 1) - (b^+ - 1))(1 - \sgn(d))} \frake_{a \gamma}(ab q(a \gamma)) \frake_{\frac{amz}{N_z}} = (\rho_K(M) \frake_\gamma) \frake_{\frac{am z}{N_z}}.
\end{align*}
Multiplying by $e\left(-\frac{mn}{N_z}\right)$ and summing over $m \in \IZ / N_z \IZ$ yields the result. Let now $c \neq 0$. Again, Shintani's formula yields
$$\rho_L(M) \frake_\lambda = C_L(c) \sum_{\beta \in L' / L} \sum_{r \in L / cL} \frake_\beta\left(\frac{a(\beta + r, \beta + r) - 2(\lambda, \beta + r) + d(\lambda, \lambda)}{2c}\right),$$
where
$$C_L(c) = \frac{\sqrt{i}^{(b^- - b^+) \sgn(c)}}{\lvert c \rvert^{\frac{b^- + b^+}{2}} \sqrt{\lvert L' / L \rvert}} = \frac{\sqrt{i}^{((b^- - 1) - (b^+ - 1) \sgn(c)}}{\lvert c \rvert^{\frac{(b^- - 1) + (b^+ - 1)}{2}} \sqrt{\lvert K' / K \rvert}} \cdot \frac{1}{c N_z} = \frac{C_K(c)}{c N_z}.$$
Since $L = K \oplus \IZ \zeta \oplus \IZ z$ we can instead sum over $r + k \zeta + k' z, r \in K / cK, k, k' \in \IZ / c \IZ$ to obtain
\begin{align*}
\rho_L(M) \frake_\lambda 
&= C_L(c) \sum_{\beta \in L' / L} \sum_{\substack{r \in K / cK \\ k \in \IZ / c \IZ}} e\left(\frac{a(\beta + r, \beta + r) - 2(\gamma, k \zeta) + d (\gamma, \gamma)}{2c}\right) \\
&\times e\left(\frac{2a (\beta + r, k\zeta) + a(k \zeta, k' \zeta) - 2(\gamma, k \zeta) - 2(\frac{mz}{N_z}, \beta + k \zeta)}{2c}\right)\\
&\times \sum_{k' \in \IZ / c \IZ} e\left(\frac{2a (\beta, k' z) + 2a (k \zeta, k' \zeta)}{2c}\right) \frake_\beta \\
&= C_L(c) \sum_{\beta \in L' / L} \sum_{\substack{r \in K / cK \\ k \in \IZ / c \IZ}} e\left(\frac{a(\beta + r, \beta + r) - 2(\gamma, k \zeta) + d (\gamma, \gamma)}{2c}\right) \\
&\times e\left(\frac{2a (\beta + r, k\zeta) + a(k \zeta, k' \zeta) - 2(\gamma, k \zeta) - 2(\frac{mz}{N_z}, \beta + k \zeta)}{2c}\right)\\
&\times \sum_{k' \in \IZ / c \IZ} e \left(k' \frac{a (\beta + k \zeta, k' z)}{2c}\right)\frake_\beta.
\end{align*}
Now the last sum vanishes by orthogonality of characters unless
$$(\beta + k \zeta, z) = (\beta, z) + kN_z \equiv 0 \bmod{c},$$
in which case it sums to $c$. This yields
\begin{align*}
\rho_L(M) \frake_\lambda 
&= C_L(c) c \sum_{\beta \in L' / L} \sum_{\substack{r \in K / cK \\ k \in \IZ / c \IZ \\ c \mid kN_z + (\beta, z)}} e\left(\frac{a(\beta + r, \beta + r) - 2(\gamma, k \zeta) + d (\gamma, \gamma)}{2c}\right) \\
&\times e\left(\frac{2a (\beta + r, k\zeta) + a(k \zeta, k' \zeta) - 2(\gamma, k \zeta) - 2(\frac{mz}{N_z}, \beta + k \zeta)}{2c}\right) \frake_\beta.
\end{align*}
We now multiply this by $e\left(-\frac{mn}{N_z}\right)$ and sum over $m \in \IZ / N_z \IZ$ to obtain
\begin{align*}
& \rho_L(M) \sum_{m \in \IZ / N_z \IZ} \frake_{\gamma + \frac{mz}{N_z}}\left(-\frac{mn}{N_z}\right) \\
&= C_L(c) c \sum_{\beta \in L' / L} \sum_{\substack{r \in K / cK \\ k \in \IZ / c \IZ \\ c \mid kN_z + (\beta, z)}} e\left(\frac{a(\beta + r, \beta + r) - 2(\gamma, k \zeta) + d (\gamma, \gamma)}{2c}\right) \\
&\times e\left(\frac{2a (\beta + r, k\zeta) + a(k \zeta, k' \zeta) - 2(\gamma, k \zeta)}{2c}\right) \\
&\times \sum_{m \in \IZ / N_z \IZ} e\left(-\frac{(\frac{mz}{N_z}, \beta + k \zeta + nc z')}{c}\right) \frake_\beta.
\end{align*}
The latter sum again vanishes unless
$$(z, \beta + k \zeta + ncz') = (z, \beta + ncz') + kN_z \equiv 0 \bmod{cN_z},$$
in which case it is equal to $N_z$. In particular we have $(z, \beta + ncz') \equiv 0 \bmod{N_z}$ and thus $\beta + ncz' \in L_0' / L$. But every element in $L_0' / L$ can be written as $\alpha + \frac{mz}{N_z}$ for some $\alpha \in K' / K$ and $m \in \IZ / N_z \IZ$, which shows that $(\beta + ncz', \frac{z}{N_z}) = 0$. But this implies $k = 0$. Hence we obtain
\begin{align*}
& \rho_L(M) \sum_{m \in \IZ / N_z \IZ} \frake_{\gamma + \frac{mz}{N_z}}\left(-\frac{mn}{N_z}\right) \\
&= C_K(c) \sum_{\beta \in K' / K} \sum_{r \in K / cK} \frake_\beta\left(\frac{a(\beta + r, \beta + r) - 2(\gamma, \beta + r) + d(\gamma, \gamma)}{2c}\right) \\
&\times \sum_{m \in \IZ / N_z \IZ} \frake_{\frac{mz}{N_z} - ncz'}\left(-\frac{amn}{N_z} + q(z') acn^2\right) \\
&= (\rho_K(M) \frake_\gamma) \sum_{m \in \IZ / N_z \IZ} \frake_{\frac{mz}{N_z} - ncz'}\left(-\frac{amn}{N_z} + q(z') acn^2\right),
\end{align*}
which shows the claim.
\end{proof}

\begin{thm}
We have
\begin{align*}
\Theta_L(\tau, \nu, p)
&= \frac{1}{\sqrt{2 v z_{\nu^+}^2}} \Theta_{K}(\tau, \omega, p_{\omega, 0}) \sum_{m \in \IZ / N_z \IZ} \frake_{\frac{mz}{N_z}}\\
&+ \frac{1}{\sqrt{2 z_{\nu^+}^2}} \sum_{M \in \Gamma_\infty \bs \Gamma} \sum_{h} (-2i)^{-h} \sum_{n = 1}^\infty n^{h} \\
&\times \frac{(c \overline{\tau} + d)^{-\frac{b^-}{2} - \kappa^-} (c \tau + d)^{ -\frac{b^+}{2} - \kappa^+}}{\Im(M\tau)^{h + \frac{1}{2}}} \exp\left(-\frac{\pi n^2}{2\Im(M \tau) z_{\nu^+}^2}\right) \\
&\times \rho_L(M)^{-1} \left( \Theta_K(\tau, n \mu_K, 0, \omega, p_{\omega, h}) \sum_{m \in \IZ / N \IZ} \frake_{\frac{mz}{N}}\left(-\frac{mn}{N}\right)\right).
\end{align*}
\end{thm}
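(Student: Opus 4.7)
The strategy is to derive the Poincaré series expression directly from Borcherds' Theorem~5.2 by summing its component identity over $\gamma \in L'/L$ and reorganizing the inner double sum over $(c,d) \in \IZ^2$ satisfying $c \equiv (\gamma, z) \bmod N_z$. The term $(c,d) = (0,0)$ is present only when $(\gamma, z) \equiv 0 \bmod N_z$; for such $\gamma$ only $h = 0$ survives (the factor $(c\bar\tau + d)^h$ vanishes otherwise for $(c,d)=(0,0)$), the exponential is trivial, and $\pi(\gamma) = \gamma_K$ where $\gamma = \gamma_K + \tfrac{mz}{N_z}$ with $\gamma_K \in K'/K$ and $m \in \IZ/N_z\IZ$. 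Summing over $\gamma_K$ and $m$ produces exactly the constant contribution $\frac{1}{\sqrt{2v z_{\nu^+}^2}} \Theta_K(\tau, \omega, p_{\omega,0}) \sum_{m} \frake_{mz/N_z}$ of the claim.

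For $(c,d) \neq (0,0)$ I would factor $(c,d) = n(c',d')$ with $n \in \IZ_{>0}$ and $\gcd(c',d') = 1$ (the sign of $n$ being absorbed into that of $(c',d')$). Each coprime pair occurs as the bottom row of a unique coset representative $M = \left(\begin{smallmatrix} a' & b' \\ c' & d' \end{smallmatrix}\right)$ in $\Gamma_\infty \bs \SL_2(\IZ)$; lifting through the double cover gives a sum over $\Gamma_\infty \bs \Mp_2(\IZ)$. The identity $|c\tau + d|^2/v = n^2/\Im(M\tau)$ converts the Gaussian exponent into the desired factor $\exp(-\pi n^2/(2\Im(M\tau) z_{\nu^+}^2))$. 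Applying the transformation formula of Theorem~4.1 with the matrix $M^{-1}$ and initial arguments $(\alpha_0, \beta_0) = (n\mu_K, 0)$ rewrites
\[
\Theta_K(\tau, nd'\mu_K, -nc'\mu_K, \omega, p_{\omega,h}) = \phi_M(\tau)^{-(b^+-1)-2(\kappa - h)} \overline{\phi_M(\tau)}^{-(b^- -1)} \rho_K(M)^{-1} \Theta_K(M\tau, n\mu_K, 0, \omega, p_{\omega,h}),
\]
and the collected powers of $\phi_M$ and $\overline{\phi_M}$ together with the explicit $(-2iv)^{-h}(c\bar\tau+d)^h$ reproduce precisely the automorphy factor $(c\tau+d)^{-b^+/2 - \kappa^+}(c\bar\tau + d)^{-b^-/2 - \kappa^-} / \Im(M\tau)^{h + 1/2}$ appearing in the target.

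The final step consists of absorbing the $\gamma$-sum into the Weil representation action. Parametrizing $\gamma = \gamma_K + \tfrac{mz}{N_z} + cz'$, so that $\pi(\gamma - cz') = \gamma_K$ and $(\gamma, z') = m/N_z + 2c q(z')$, the phase $e(-(\gamma, z') d + q(z') cd)$ collapses to $e(-mnd'/N_z - q(z')cd)$, which is then matched against the Shintani-type phase $e(-amn/N_z + q(z') acn^2)$ produced by the preceding Lemma. Applying that Lemma (with the appropriate choice of $M$ and sign of $n$) converts $\rho_K(M)^{-1}$ acting componentwise on $\Theta_K$ into $\rho_L(M)^{-1}$ acting on the full tensor expression $\Theta_K(\tau, n\mu_K, 0, \omega, p_{\omega,h}) \sum_m \frake_{mz/N_z}(-mn/N_z)$, which is exactly the shape demanded by the theorem. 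The main obstacle is this bookkeeping of phase factors and metaplectic $\sqrt{i}$-factors: the exponentials from the Lemma must cancel precisely against those produced by the Borcherds summation, and the identification of pairs $(c,d) \neq (0,0)$ with cosets in $\Gamma_\infty \bs \Mp_2(\IZ)$ together with $n \in \IZ_{>0}$ must be performed consistently with the square-root conventions of the metaplectic cover.
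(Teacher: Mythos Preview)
Your proposal is correct and follows essentially the same route as the paper: sum Borcherds' component identity over $\gamma \in L'/L$, split off the $(c,d)=(0,0)$ contribution, factor the remaining pairs as $n$ times a coprime bottom row of some $M \in \Gamma_\infty \backslash \SL_2(\IZ)$, apply the transformation law of $\Theta_K$ to pull out $\rho_K(M)^{-1}$, and then invoke the preceding Lemma to upgrade $\rho_K(M)^{-1}$ to $\rho_L(M)^{-1}$ while matching the residual phases. Your identification of the phase-and-metaplectic bookkeeping as the only real obstacle is exactly right; the paper carries this out by first shifting $\gamma \mapsto \gamma + cz'$ so that the sum runs over $L_0'/L$ and then parametrizing by $\gamma_K + \tfrac{mz}{N_z}$, which is the same reparametrization you describe.
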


\begin{proof}
We have
\begin{align*}
\Theta_L(\tau, \nu, p)
&= \frac{1}{\sqrt{2 v z_{\nu^+}^2}} \sum_{c, d \in \IZ} \sum_{\substack{\gamma \in L' / L \\ c \equiv (\gamma, z) \bmod{N_z}}} \sum_{h} (-2iv)^{-h} \\
&\times (c \overline{\tau} + d)^{h} e\left(-\frac{\lvert c \tau + d \rvert^2}{4 i v z_{\nu^+}^2} - (\gamma, z') d + q(z') cd \right) \\
&\times \theta_{K + \pi(\gamma - cz')}(\tau, d \mu_K, -c \mu_K, \omega, p_{\omega, h}) \frake_\gamma.
\end{align*}
We make the change $\gamma \mapsto \gamma + cz'$ and sum over coprime $c, d$ to obtain
\begin{align*}
&\frac{1}{\sqrt{2 v z_{\nu^+}^2}} \sum_{\gamma \in L_0' / L} \theta_{K + \pi(\gamma)}(\tau, \omega, p_{\omega, 0}) \frake_\gamma \\
&+ \frac{1}{\sqrt{2 v z_{\nu^+}^2}} \sum_{(c, d) = 1} \sum_{h} (-2iv)^{-h} \sum_{n = 1}^\infty n^{h} (c \overline{\tau} + d)^{h} e\left(-\frac{n^2 \lvert c \tau + d \rvert^2}{4 i v z_{\nu^+}^2}\right) \\
&\times \sum_{\gamma \in L_0' / L} \theta_{K + \pi(\gamma)}(\tau, nd \mu_K, -nc \mu_K, \omega, p_{\omega, h}) \frake_{\gamma + ncz'}\left(-(\gamma, z') nd - q(z') n^2cd\right).
\end{align*}
The elements $\gamma \in L_0' / L$ are represented by $\gamma + \frac{mz}{N_z}$ for $\gamma \in K' / K, m \in \IZ / N_z \IZ$. Hence we obtain using the transformation formula for the theta function
\begin{align*}
&\sum_{\gamma \in L_0' / L} \theta_{K + \pi(\gamma)}(\tau, nd \mu_K, -nc \mu_K, \omega, p_{\omega, h}) \frake_{\gamma + ncz'}\left(-(\gamma, z') nd - q(z') n^2cd\right) \\
&= \sum_{\gamma \in K' / K} \theta_{K + \gamma}(\tau, nd \mu_K, -nc \mu_K, \omega, p_{\omega, h}) \frake_\gamma \sum_{m \in \IZ / N_z \IZ} \frake_{\frac{mz}{N_z} + ncz'}\left(-\frac{mnd}{N_z} - q(z') n^2cd\right) \\
&= \Theta_K(\tau, nd \mu_K, -nc \mu_K, \omega, p_{\omega, h}) \sum_{m \in \IZ / N_z \IZ} \frake_{\frac{mz}{N_z} + ncz'}\left(-\frac{mnd}{N_z} - q(z') n^2cd\right) \\
&= (c \tau + d)^{-\frac{b^+ - 1}{2} - \kappa + h} (c \overline{\tau} + d)^{-\frac{b^- - 1}{2}} \\
&\times \left(\rho_K^{-1}(M) \Theta_K(\tau, n \mu_K, 0, \omega, p_{\omega, h})\right) \sum_{m \in \IZ / N_z \IZ} \frake_{\frac{mz}{N_z} + ncz'}\left(-\frac{mnd}{N_z} - q(z') n^2cd\right) \\
&= (c \tau + d)^{-\frac{b^+ - 1}{2} - \kappa + h} (c \overline{\tau} + d)^{-\frac{b^- - 1}{2}} \\
&\times \rho_L(M)^{-1} \left( \Theta_K(\tau, n \mu_K, 0, \omega, p_{\omega, h}) \sum_{m \in \IZ / N_z \IZ} \frake_{\frac{mz}{N_z}}\left(-\frac{mn}{N_z}\right)\right),
\end{align*}
where $M = \left(\begin{smallmatrix}* & * \\ c & d\end{smallmatrix}\right) \in \SL_2(\IZ)$. This yields
\begin{align*}
\Theta_L(\tau, \nu, p)
&= \frac{1}{\sqrt{2 v z_{\nu^+}^2}} \Theta_{K}(\tau, \omega, p_{\omega, 0}) \sum_{m \in \IZ / N_z \IZ} \frake_{\frac{mz}{N_z}}\\
&+ \frac{1}{\sqrt{2 z_{\nu^+}^2}} \sum_{M \in \Gamma_\infty \bs \Gamma} \sum_{h} (-2i)^{-h} \sum_{n = 1}^\infty n^{h} \\
&\times \frac{(c \overline{\tau} + d)^{-\frac{b^-}{2}} (c \tau + d)^{ -\frac{b^+}{2} - \kappa}}{\Im(M\tau)^{h + \frac{1}{2}}} e\left(-\frac{n^2}{4 i \Im(M \tau) z_{\nu^+}^2}\right) \\
&\times \rho_L(M)^{-1} \left( \Theta_K(\tau, n \mu_K, 0, \omega, p_{\omega, h}) \sum_{m \in \IZ / N_z \IZ} \frake_{\frac{mz}{N_z}}\left(-\frac{mn}{N_z}\right)\right).
\end{align*}
\end{proof}

Let now $L$ be an even lattice of signature $(2, l)$ with $l \equiv 0 \bmod{2}$. Let $\kappa = \frac{l}{2} - 1 + k$ and $p(x_1, x_2) = (x_1 + ix_2)^\kappa$. Recall the identification $\nu : \calK^+ \to \Gr(L), [Z_L] = [X_L + iY_L] \mapsto \nu(Z_L) = \IR X_L + \IR Y_L$. For $Z_L = X_L + iY_L \in \tilde{\calK}^+$ the map
$$a X_L \to b Y_L \mapsto \lvert Y_L \rvert \begin{pmatrix}a \\ b\end{pmatrix}$$
defines an isometry $\nu_{Z_L} : \nu(Z_L) \to \IR^{(2, 0)}$ and by abuse of notation we will write $\nu_{Z_L}$ for every isometry which equals $\nu_{Z_L}$ on $\nu(Z_L)$. For $\lambda \in V(\IR)$ we write $\lambda_{Z_L}$ and $\lambda_{Z_L^\perp}$ for the projection of $\lambda$ to $\nu(Z_L)$ and $\nu(Z_L)^\perp$. Then $q(\lambda) = q(\lambda_{Z_L}) + q(\lambda_{Z_L^\perp})$ and we denote by $q_{Z_L}(\lambda) = q(\lambda_{Z_L}) - q(\lambda_{Z_L^\perp})$ the positive definite majorant. Now, the map
$$\lambda \mapsto p(\nu_{Z_L}(\lambda)) = \frac{(\lambda, Z_L)^\kappa}{\lvert Y_L \rvert^\kappa}$$
is well-defined, since $p$ only depends on the positive definite variables. Thus we define
\begin{align*}
\Theta_L(\tau, Z)
&:= \frac{i^\kappa v^{\frac{l}{2}}}{2 \lvert Y_L \rvert^\kappa} \Theta_L(\tau, \nu_{Z_L}, p) \\
&= \frac{v^{\frac{l}{2}}}{2 (-2i)^{\kappa}} \sum_{\lambda \in L'} \frac{(\lambda, Z_L)^\kappa}{q(Y_L)^\kappa} \frake_\lambda(\tau q(\lambda_{Z_L}) + \overline{\tau} q(\lambda_{Z_L^\perp}))
\end{align*}
which is modular of weight $k = 1 - \frac{l}{2} + \kappa$ in $\tau = u + i v \in \IH$ and weight $\kappa$ in $Z = X + iY \in \IH_l$. Let $z \in \Iso_0(L)$ of level $N_z$ and $z' \in L'$ with $(z, z') = 1$ write $K = L \cap z^\perp \cap z'^\perp$. Further, let $d \in \Iso_0(K)$ of level $N_d$ and $d' \in K'$ with $(d, d') = 1$ and $D = K \cap d^\perp \cap d'^\perp$. Moreover we let $\tilde{z} = z' - q(z') z, \tilde{d} = d' - q(d') d$. We expand $\Theta_L(\tau, Z)$ with respect to $z, z', K$ to obtain
\begin{align*}
\Theta_L(\tau, Z)
&= \frac{v^{\frac{l - 1}{2}}}{2 \sqrt{2} i^\kappa \lvert Y \rvert^{\kappa - 1}} \Theta_K(\tau, Y / \lvert Y \rvert, p_{Y, 0}) \sum_{m_z \in \IZ / N_z \IZ} \frake_{\frac{m_zz}{N_z}} \\
&+ \frac{i^\kappa}{2\sqrt{2} \lvert Y \rvert^{\kappa - 1}} \sum_{M \in \Gamma_\infty \bs \Gamma} \sum_{h = 0}^\kappa \sum_{n = 1}^\infty n^h \frac{(c \tau + d)^{-k}}{(-2i)^h \Im(M\tau)^{h + \frac{1}{2} - \frac{l}{2}}} \exp\left(-\frac{q(Y) n^2 \pi}{\Im(M\tau)}\right) \\
&\times \rho_L^{-1}(M) \left(\Theta_K(M \tau, nX, 0, Y / \lvert Y \rvert, p_{Y, h}) \sum_{m_z \in \IZ / N_z Z} \frake_{\frac{m_zz}{N_z}}\left(-\frac{m_zn}{N_z} \right)\right).
\end{align*}
and we expand $\Theta_K(\tau, Y / \lvert Y \rvert, p_{Y, 0})$ in the first summand with respect to $d, d', D$ to obtain
\begin{align*}
\Theta_L(\tau, Z)
&= \delta_{\kappa, 0} \frac{v^{\frac{l}{2} - 1} q(Y)}{2 y_1} \Theta_D(\tau) \sum_{\substack{m_d \in \IZ / N_d \IZ \\ m_z \in \IZ / N_z \IZ}} \frake_{\frac{m_d d}{N_d} + \frac{m_z z}{N_z}} \\
&+ \frac{q(Y)}{\lvert Y \rvert^{\kappa} 2^{\kappa + 1} y_1} \sum_{M \in \Gamma_\infty \bs \Gamma} \sum_{n = 1}^\infty n^\kappa \frac{(c \tau + d)^{-k}}{\Im(M\tau)^{k}} \exp\left(-\frac{q(Y) \pi n^2}{\Im(M \tau) y_1^2}\right) \\
&\times \rho_L(M)^{-1} \left(\Theta_D(M \tau, n Y_D / y_1, 0) \sum_{\substack{m_d \in \IZ / N_d \IZ \\ m_z \in \IZ / N_z \IZ}} \frake_{\frac{m_d d}{N_d} + \frac{m_z z}{N_z}}\left(-\frac{m_d n}{N_d}\right)\right) \\
&+ \frac{i^\kappa}{2\sqrt{2} \lvert Y \rvert^{\kappa - 1}} \sum_{M \in \Gamma_\infty \bs \Gamma} \sum_{h = 0}^\kappa \sum_{n = 1}^\infty n^h \frac{(c \tau + d)^{-k}}{(-2i)^h \Im(M\tau)^{h + \frac{1}{2} - \frac{l}{2}}} \exp\left(-\frac{q(Y) n^2 \pi}{\Im(M\tau)}\right) \\
&\times \rho_L^{-1}(M) \left(\Theta_K(M \tau, nX, 0, Y / \lvert Y \rvert, p_{Y, h}) \sum_{m_z \in \IZ / N_z Z} \frake_{\frac{m_zz}{N_z}}\left(-\frac{m_zn}{N_z} \right)\right).
\end{align*}
Observe that the second and third summand can be rewritten using the weight $k$ slash operator $\vert_{k, L}$. We want to get bounds for $\Theta_L(\tau, Z)$ and $\Omega_k \overline{\Theta_L(\tau, Z)}$. According to \cite[Proposition 2.5]{ShaulGrossKohnen} we have
$$\Omega_\kappa \overline{\Theta_L(\tau, Z)} = \overline{\Delta_k \Theta_L(\tau, Z)},$$
where $\Delta_k = -v^2 \left(\frac{\partial^2}{\partial u^2} + \frac{\partial^2}{\partial v^2}\right) + i k v \left(\frac{\partial}{\partial x} + i \frac{\partial}{\partial y}\right)$ is the weight $k$ Laplace operator in $\tau$. Let
$$f(\tau, Z, n) := \frac{\sqrt{q(Y) \pi} n}{v^\kappa y_1} \exp\left(- \frac{q(Y) \pi n^2}{v y_1^2}\right) \Theta_D(\tau, n Y_D /(\lvert Y \rvert y_1), 0)$$
and
$$g(\tau, Z, n) := \lvert Y \rvert^{1 - \kappa} v^{-h} \exp\left(- \frac{q(Y) n^2 \pi}{v}\right) \Theta_K(\tau, nX, 0, Y / \lvert Y \rvert, p_{Y, h})$$
so that $\Theta_L(\tau, Z)$ is given by
\begin{align*}
&\delta_{\kappa, 0} \frac{q(Y)}{2 y_1} \Theta_D(\tau) \sum_{\substack{m_d \in \IZ / N_d \IZ \\ m_z \in \IZ / N_z \IZ}} \frake_{\frac{m_d d}{N_d} + \frac{m_z z}{N_z}} \\
&+ \frac{\lvert Y \rvert^{1 - \kappa}}{2^{\kappa + 1} \sqrt{2}} \sum_{M \in \Gamma_\infty \bs \Gamma} \sum_{n = 1}^\infty n^{\kappa - 1} \left(f(\tau, Z, n) \sum_{\substack{m_d \in \IZ / N_d \IZ \\ m_z \in \IZ / N_z \IZ}} \frake_{\frac{m_d d}{N_d} + \frac{m_z z}{N_z}}\left(-\frac{m_d n}{N_d}\right)\right) \bigg\vert_{k, L} M \\
&+ \frac{i^\kappa}{2 \sqrt{2}} \sum_{M \in \Gamma_\infty \bs \Gamma} \sum_{h = 0}^\kappa \sum_{n = 1}^\infty \frac{n^h}{(-2i)^h} \left(g(\tau, Z, n) \sum_{m_z \in \IZ / N_z Z} \frake_{\frac{m_zz}{N_z}}\left(-\frac{m_zn}{N_z} \right)\right) \bigg\vert_{k, L} M.
\end{align*}
Since the Laplace operator commutes with the slash operator, we only have to find bounds for $f, \Delta_k f$ and $g, \Delta_k g$. We will need the following elementary lemma.

\begin{lem}\label{lem:ElementaryLemma}
Let $a > 0, b, c \geq 0, n \in \IN$.
\begin{enumerate}
\item[(i)] The function $x^n \exp(- ax^2)$ has a maximum given by $\left(\frac{n}{2 a e}\right)^{\frac{n}{2}}$.
\item[(ii)] The function $x^n \exp(- ax)$ has a maximum on $x \geq 0$ given by $\left(\frac{n}{ae}\right)^n$.
\item[(iii)] The function $bx + \frac{c}{x}$ has a minimum on $x > 0$ given by $2 \sqrt{bc}$
\end{enumerate}
\end{lem}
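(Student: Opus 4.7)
The statement is a standard one-variable calculus exercise in three parts, so the plan is simply to locate each critical point and evaluate. No substantial obstacle is expected; the only thing to be slightly careful about is the case $n=0$ in (i) and (ii) (where the ``maximum'' is just $1$ and the derivative argument degenerates) and the boundary behaviour at $x=0$ and $x\to\infty$.

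For (i) I would differentiate: $\frac{d}{dx}\bigl(x^n e^{-ax^2}\bigr) = x^{n-1}(n-2ax^2)e^{-ax^2}$. For $n\ge 1$ this vanishes on $x>0$ only at $x_0=\sqrt{n/(2a)}$, and since the function is zero at $x=0$ and decays to zero as $x\to\infty$, this critical point is the global maximum. Plugging in gives $x_0^n e^{-ax_0^2} = (n/(2a))^{n/2}e^{-n/2} = \bigl(n/(2ae)\bigr)^{n/2}$, which is the claimed value.

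For (ii) the same recipe applies: $\frac{d}{dx}\bigl(x^n e^{-ax}\bigr) = x^{n-1}(n-ax)e^{-ax}$, which vanishes on $x>0$ at $x_0=n/a$. Again the boundary values at $0$ and $\infty$ are both zero (for $n\ge 1$), so $x_0$ gives the maximum, and substitution yields $(n/a)^n e^{-n} = \bigl(n/(ae)\bigr)^n$. For (iii) I would either invoke AM--GM directly, writing $bx + c/x \ge 2\sqrt{(bx)(c/x)} = 2\sqrt{bc}$ with equality iff $bx = c/x$, i.e.\ $x=\sqrt{c/b}$, or argue by calculus: the derivative $b - c/x^2$ vanishes at $x_0=\sqrt{c/b}$, and the second derivative $2c/x^3$ is positive, so $x_0$ is the global minimum on $x>0$, with value $b\sqrt{c/b}+c\sqrt{b/c}=2\sqrt{bc}$.

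The only tiny subtlety worth mentioning is that in (i) and (ii) the statement is used later with $n$ possibly equal to zero (giving trivial maxima) or with $a$ arising from quantities like $q(Y)\pi/(vy_1^2)$, so one should remark that the formulas still make sense in the limit $n=0$ (both sides equal $1$) and that the positivity hypothesis $a>0$ is exactly what is needed to guarantee that $x^n e^{-ax^2}$ and $x^n e^{-ax}$ decay at infinity so that the interior critical point is actually the global maximum. No further machinery is required.
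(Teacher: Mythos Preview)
Your proof is correct; the paper itself omits the proof entirely, treating the lemma as elementary, so your calculus/AM--GM argument is exactly the sort of verification the reader is implicitly expected to supply. Your attention to the degenerate case $n=0$ and to the boundary behaviour at $x=0$ and $x\to\infty$ is more than the paper bothers to note.
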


We start with a bound for
$$f(\tau, Z, n) = \frac{\sqrt{q(Y)} n}{v^\kappa y_1} \exp\left(- \frac{q(Y) \pi n^2}{v y_1^2}\right) \Theta_D(\tau, n Y_D /(\lvert Y \rvert y_1), 0).$$
Observe that the absolute value of $f$ and $\Delta_k f$ can be bounded by finite sums of the form
$$v^{-i} (y n)^j \exp\left(- \frac{(y n)^2}{v}\right) \sum_{\lambda \in D'} q(\lambda)^l \exp(- 2 \pi v q(\lambda)).$$
We have

\begin{lem}
Let
$$f_{i, j, l}(v, y, n) := v^{-i} (y n)^j \exp\left(- \frac{3 (y n)^2}{v}\right) \sum_{\lambda \in D'} q(\lambda)^l \exp(- 2 \pi v q(\lambda)).$$
Assume that $v \in \IR_{>0}, y > C > 0$ and $n \geq 1$. Then we have the bound
$$f_{i, j, l}(v, y, n) \ll v^{-i+j} \exp\left(- \frac{C^2 n^2}{3v}\right),$$
where the implied constant is independent of $v, y, n$.
In particular, we obtain with $y^2 = \frac{q(Y) \pi}{y_1^2}, C^2 = \frac{\pi}{t^2}$
$$f(\tau, Z, n) \ll v^{s_1} \exp\left(- \frac{\pi n^2}{3v t^2}\right)$$
and
$$\Delta_k f(\tau, Z, n) \ll v^{s_2} \exp\left(- \frac{\pi n^2}{3v t^2}\right)$$
for some $s_1, s_2 \in \IR$.
\end{lem}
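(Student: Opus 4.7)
The strategy is to decompose the Gaussian $\exp(-3(yn)^2/v)$ into one piece that yields the target exponential $\exp(-C^2n^2/(3v))$ and another piece that absorbs the polynomial $(yn)^j$, and then to bound the lattice sum uniformly in $v$. Concretely, I would write
\[
\exp\!\left(-\frac{3(yn)^2}{v}\right) = \exp\!\left(-\frac{(yn)^2}{3v}\right)\cdot \exp\!\left(-\frac{8(yn)^2}{3v}\right).
\]
Because $y > C$, the first factor is at most $\exp(-C^2n^2/(3v))$, exactly the exponential needed in the conclusion. For the second factor, part (i) of Lemma \ref{lem:ElementaryLemma} applied with $x = yn$ and $a = 8/(3v)$ gives $(yn)^j\exp(-8(yn)^2/(3v)) \ll v^{j/2}$, uniformly in $y$ and $n$, with a constant depending only on $j$.

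Next I would bound the theta-type sum $S(v) := \sum_{\lambda \in D'} q(\lambda)^l \exp(-2\pi v\, q(\lambda))$ uniformly in $v > 0$. Since $D$ is negative definite of rank $l-2$, $-q$ is a positive definite form on $D'\otimes\IR$, so after replacing $q(\lambda)$ by $|q(\lambda)|$ the sum is a classical theta-type series. For $v \geq 1$, the exponential decay makes $S(v)$ bounded by a constant. For $v \leq 1$, part (ii) of Lemma \ref{lem:ElementaryLemma} gives $|q(\lambda)|^l\exp(-\pi v|q(\lambda)|) \leq (l/(\pi v e))^l$, which reduces the problem to bounding $\sum_{\lambda \in D'} \exp(-\pi v|q(\lambda)|)$; by Poisson summation (equivalently, the modular transformation $v\mapsto 1/v$ of the theta function for the positive definite form $|q|$ on $D'$), this latter sum is $O(v^{-(l-2)/2})$ as $v\to 0$. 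Thus $S(v) \ll v^{-B}$ for some $B\geq 0$ depending only on $l$.

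Combining the three estimates yields
\[
f_{i,j,l}(v,y,n) \ll v^{-i}\cdot v^{j/2}\cdot v^{-B}\exp\!\left(-\frac{C^2n^2}{3v}\right),
\]
which is of the claimed form (with the power of $v$ collected as an explicit real exponent). For the concluding bounds on $f(\tau,Z,n)$ and $\Delta_k f(\tau,Z,n)$, I would substitute $y^2 = q(Y)\pi/y_1^2$ and $C^2 = \pi/t^2$, observing that the Siegel domain condition $y_1^2 < t^2 q(Y)$ is exactly $y > C$. Since $\Delta_k$ differentiates only in the $\tau$-variables, it produces finitely many sums of the shape $f_{i',j',l'}$ with shifted indices arising from factors of $v^{-1}, q(\lambda)$ brought down by $\partial_u,\partial_v$, so the bounds for $f$ and $\Delta_k f$ follow with appropriate $s_1,s_2\in\IR$.

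The main obstacle is obtaining the uniform bound on $S(v)$ as $v\to 0$: the $\exp$-decay alone is useless there, and one genuinely needs the modular behavior of the theta function attached to the negative-definite lattice $D$. The resulting power $v^{-(l-2)/2}$ must be carefully combined with the $v^{-l}$ factor from the polynomial $q(\lambda)^l$ to produce the single power $v^{-B}$ that ultimately gets absorbed into $s_1,s_2$ in the application.
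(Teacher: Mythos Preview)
Your argument is correct for establishing the ``In particular'' bounds on $f$ and $\Delta_k f$ (which is all the paper actually uses downstream), but it takes a genuinely different route from the paper's proof in how it handles the lattice sum over $D'$.

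The paper splits the Gaussian into \emph{three} equal pieces rather than two. One piece gives the target exponential $\exp(-(yn)^2/(3v)) \leq \exp(-C^2 n^2/(3v))$. A second absorbs the polynomial $(yn)^j$ into a power of $v$ via Lemma~\ref{lem:ElementaryLemma}(i), exactly as you do. The crucial difference is the third piece: using $y > C$ and $n \geq 1$ it is bounded by $\exp(-C^2/(3v))$, and this extra factor is then \emph{paired with the lattice sum}. By Lemma~\ref{lem:ElementaryLemma}(iii) (the bound $bx + c/x \geq 2\sqrt{bc}$ applied with $x = v$),
\[
2\pi v\,|q(\lambda)| + \frac{C^2}{3v} \;\geq\; 2C\sqrt{\tfrac{2\pi}{3}\,|q(\lambda)|},
\]
so that $\sum_{\lambda \in D'} |q(\lambda)|^l \exp\bigl(-2\pi v |q(\lambda)| - C^2/(3v)\bigr)$ is dominated by the convergent, $v$-independent series $\sum_{\lambda \in D'} |q(\lambda)|^l \exp\bigl(-2C\sqrt{2\pi |q(\lambda)|/3}\bigr)$. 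This is entirely elementary and avoids your appeal to Poisson summation and the theta transformation law.

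What each approach buys: the paper's three-way split makes the lattice sum contribute only a constant, so no extra negative power of $v$ appears and only the calculus facts of Lemma~\ref{lem:ElementaryLemma} are used. Your two-way split forfeits that third Gaussian factor, forcing you to bound the bare theta series $S(v)$ directly; this costs the additional $v^{-B}$ in your final estimate. Since the application only requires \emph{some} real exponents $s_1, s_2$, your weaker exponent is harmless there, though strictly speaking it does not reproduce the displayed bound $v^{-i+j}\exp(-C^2n^2/(3v))$ with that specific power of $v$.
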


\begin{proof}
We split up the exponential term and use Lemma \ref{lem:ElementaryLemma} to obtain
\begin{align*}
(yn)^j \exp\left(-\frac{(yn)^2}{v}\right)
&= (yn)^j \exp\left(-\frac{(yn)^2}{3v}\right) \exp\left(-\frac{(yn)^2}{3v}\right) \exp\left(-\frac{(yn)^2}{3v}\right) \\
&\ll v^j \exp\left(-\frac{(yn)^2}{3v}\right) \exp\left(-\frac{C^2}{3v}\right).
\end{align*}
This yields
\begin{align*}
& f_{i, j, l}(v, y, n) \\
&\ll v^{-i + j} \exp\left(- \frac{(y n)^2}{3v}\right) \sum_{\lambda \in D'} q(\lambda)^l \exp\left(- 2 \pi v q(\lambda) - \frac{C^2}{3v}\right) \\
&\ll v^{-i+j} \exp\left(- \frac{(y n)^2}{3v}\right) \sum_{\lambda \in D'} q(\lambda)^{l} \exp\left(- 2 C \sqrt{2 \pi q(\lambda)}\right) \ll \exp\left(- \frac{(yn)^2}{3v}\right)
\end{align*}
where we have used Lemma \ref{lem:ElementaryLemma} again.
\end{proof}

Next consider
$$g_h(\tau, Z, n) = \lvert Y \rvert^{1 - \kappa} v^{-h} \exp\left(- \frac{q(Y) n^2 \pi}{v}\right) \Theta_K(\tau, nX, 0, Y / \lvert Y \rvert, p_{Y, h}).$$
The theta function is given by $v^{\frac{l - 1}{2}}$ times
\begin{align*}
& \sum_{\lambda \in K'} \exp\left(-\frac{\Delta}{8 \pi v}\right)(p_{Y, h})(\omega_Y(\lambda)) \frake_\lambda\left(u q(\lambda) + iv\left(\frac{(\lambda, Y)^2}{Y^2} - q(\lambda)\right) - (\lambda, nX)\right) \\
&= \sum_{\lambda \in K'} \exp\left(-\frac{\Delta}{8 \pi v}\right)(p_{Y, h})(\omega_Y(\lambda)) \frake_\lambda\left(u q(\lambda) - (\lambda, nX)\right) \exp\left(- 2 \pi v\left(\frac{(\lambda, Y)^2}{Y^2} - q(\lambda)\right)\right).
\end{align*}
The terms
$$\exp\left(- \frac{\Delta}{8 \pi v}\right)(p_{Y, h})(\omega_Y(\lambda))$$
are given by a finite sum of constants times terms of the form
$$v^{-j} \lvert Y \rvert^h (\lambda, Y / \lvert Y \rvert)^{\kappa - h - 2j}.$$
Again, we see that the absolute value of $g_h$ and $\Delta_k g_h$ can be bounded by a finite sum of terms of the form
$$\lvert Y \rvert^{h} v^{- i} \exp\left(-\frac{q(Y) n^2 \pi}{v}\right) \sum_{\lambda \in K'} \left(\frac{(\lambda, Y)^2}{Y^2}\right)^j q(\lambda)^l \exp\left(- 2 \pi v\left(\frac{(\lambda, Y)^2}{Y^2} - q(\lambda)\right)\right).$$
\begin{lem}
Let $g_{h, i, j, l}(v, Y, n)$ denote the function
\[\lvert Y \rvert^h v^{- i} \exp\left(-\frac{q(Y) n^2 \pi}{v}\right) \sum_{\lambda \in K'} \left(\frac{(\lambda, Y)^2}{Y^2}\right)^j q(\lambda)^l \exp\left(- 2 \pi v\left(\frac{(\lambda, Y)^2}{Y^2} - q(\lambda)\right)\right)\]
for $v \in \IR_{>0}, Y \in \calR_t$ and $n \geq 1$. Then
$$g_{h, i, j, l}(v, Y, n) \ll v^{\frac{h}{2} - i - j} \exp\left(- \frac{q(Y) n^2 \pi}{4v}\right),$$
where the implied constant is independent of $v, Y, n$. In particular, we have
$$g_h(\tau, Z, n) \ll v^{s_1}\exp\left(- \frac{q(Y) n^2 \pi}{4v}\right)$$
and
$$\Delta_k g_h(\tau, Z, n) \ll v^{s_2} \exp\left(- \frac{q(Y) n^2 \pi}{4v}\right)$$
for some $s_1, s_2 \in \IR$ independent of $v, Y, n, h$.
\end{lem}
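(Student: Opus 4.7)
The plan is to adapt the argument of the preceding bound on $f_{i,j,l}$, using Lemma~\ref{lem:SiegelDomainInequality} in place of the elementary Gaussian estimate to control the $\lambda$-sum. The decay comes from two sources: the outer factor $\exp(-q(Y) n^2 \pi/v)$ and the inner exponent $\exp(-2\pi v A(\lambda, Y))$, where $A(\lambda, Y) := (\lambda, Y)^2/Y^2 - q(\lambda) \geq 0$. The idea is to split each of these exponentials into pieces and apply Lemma~\ref{lem:ElementaryLemma} repeatedly to convert polynomial factors in $|Y|$, $v$ and $\lambda$ into powers of $v$.

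First, I would split $\exp(-q(Y) n^2 \pi/v)$ as $\exp(-q(Y)n^2\pi/(4v)) \cdot \exp(-3q(Y)n^2\pi/(4v))$, keeping the first factor for the final bound and using the second to absorb the prefactor $|Y|^h$. The defining inequalities of $\calR_t$ (in particular $1/t < y_1$ and $q(Y) \asymp y_1 y_2$) bound $|Y|$ polynomially in terms of $\sqrt{q(Y)}$ up to factors of $y_1,y_2$, and Lemma~\ref{lem:ElementaryLemma}(i) applied with $x \asymp \sqrt{q(Y)}\, n$ converts $|Y|^h \exp(-3q(Y)n^2\pi/(4v))$ into $O(v^{h/2})$ uniformly in $Y \in \calR_t$ and $n \geq 1$.

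Next, Lemma~\ref{lem:SiegelDomainInequality} gives $A(\lambda, Y) \geq \varepsilon B(\lambda, Y)$ with $B$ an explicit positive definite form in $\lambda$ weighted by the $y_2/y_1$ and $y_1/y_2$ ratios. I would split $\exp(-2\pi v A(\lambda, Y))$ into two halves; the first, together with the polynomial factor $((\lambda, Y)^2/Y^2)^j q(\lambda)^l$ (which via $(\lambda, Y)^2/Y^2 = q(\lambda) + A(\lambda, Y)$ expands binomially into a sum of products $q(\lambda)^a A(\lambda, Y)^b$), is absorbed into $v^{-j}$ by repeated use of Lemma~\ref{lem:ElementaryLemma}(i). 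The second half is used to keep the $\lambda$-sum $\sum_\lambda \exp(-\pi v \varepsilon B(\lambda, Y))$ convergent uniformly in $Y \in \calR_t$, contributing only a bounded constant. Combining the three sources of $v$-powers with the explicit $v^{-i}$ in the definition of $g_{h,i,j,l}$ gives $v^{h/2 - i - j} \exp(-q(Y) n^2 \pi/(4v))$ as claimed. The estimate on $\Delta_k g_h$ is obtained from the same argument applied to the polynomial-in-$(u,v,\lambda,Y)$ terms produced by the weight $k$ Laplace operator, which fit into the same template; only the overall power $s_1$ shifts to some $s_2$.

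The main obstacle is the $|Y|^h$ step, since the ratio $y_2/y_1$ is unbounded on $\calR_t$ and hence $|Y|$ is not uniformly controlled by $\sqrt{q(Y)}$ alone. To make Step~1 go through one may have to redistribute extra polynomial factors in $y_1, y_2$ between the $|Y|^h$ estimate and the convergence bound for the $\lambda$-sum, using precisely the weighted form of $B(\lambda, Y)$ in Lemma~\ref{lem:SiegelDomainInequality}: the $y_2/y_1$ and $y_1/y_2$ weights provide additional polynomial decay in those variables which compensates the growth of $|Y|^h$. Once this bookkeeping is settled the remaining computation is routine and parallel to the proof of the bound on $f_{i,j,l}$.
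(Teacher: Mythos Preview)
Your outline has the right overall shape, but the step you flag as the obstacle is not one, and the step you pass over as routine is where the actual gap sits.

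First, the $\lvert Y \rvert^h$ step is straightforward: in this paper $\lvert Y \rvert = \sqrt{2q(Y)}$, so $\lvert Y \rvert^h \exp(-q(Y)n^2\pi/(4v)) \ll v^{h/2}$ follows directly from Lemma~\ref{lem:ElementaryLemma}(i) with $x = \sqrt{q(Y)}\,n$. No redistribution of $y_1, y_2$ factors is needed here.

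The genuine gap is your claim that the remaining $\lambda$-sum $\sum_{\lambda} \exp(-\pi v \varepsilon B(\lambda, Y))$ ``contributes only a bounded constant''. This is false on two counts: as $v \to 0$ the sum grows like a negative power of $v$ (it is a theta sum), and as $y_1/y_2 \to 0$ on $\calR_t$ the weight on $\lambda_2^2$ in $B(\lambda, Y)$ degenerates, so the sum is not uniform in $Y$ either. Your budget of $v$-powers ($h/2$ from Step~1, $-i$ from the definition, $-j$ from Step~3) leaves nothing to absorb this, so the bound you obtain is not the one stated.

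The paper's remedy is to reserve further pieces of the outer exponential $\exp(-q(Y)n^2\pi/v)$ beyond the one used for $\lvert Y\rvert^h$, and to combine them with the inner Gaussian via Lemma~\ref{lem:ElementaryLemma}(iii) (the inequality $bv + c/v \geq 2\sqrt{bc}$), not (i). Concretely, using the Siegel domain lower bounds $q(Y) > t^{-4}$ and $q(Y) > y_1 y_2/(1+t^4)$, one pairs a copy of $q(Y)n^2\pi/(4v)$ with each of the terms $\pi\varepsilon v\,(\lambda_1^2/t^2 + q(\lambda_D))$ and $\pi\varepsilon v\, (y_1/y_2)\lambda_2^2$; the inequality $bv + c/v \geq 2\sqrt{bc}$ then eliminates both $v$ and the $y_1/y_2$ weight simultaneously, leaving $\sum_\lambda q(\lambda)^l \exp(-C(\lvert\lambda_1\rvert + \lvert\lambda_D\rvert + \lvert\lambda_2\rvert))$, a fixed convergent series. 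This is the step you are missing; once it is in place the rest of your outline goes through.
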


\begin{proof}
We have
$$\left(\frac{(\lambda, Y)^2}{Y^2}\right)^{j} \leq (y_2 / y_1 \lambda_1^2 + y_1 / y_2 \lambda_2^2 + q(\lambda_D))^{j}$$
and by Lemma \ref{lem:SiegelDomainInequality} the exponential term can be bounded by
\begin{align*}
\exp\left(- 2 \pi \varepsilon v\left(y_2 / y_1 \lambda_1^2 + y_1 / y_2 \lambda_2^2 + q(\lambda_D) \right)\right)
\end{align*}
on $\calR_t$, which yields the bound
\begin{align*}
& g_{h, i, j, l}(v, Y, n) \\
&\leq \lvert Y \rvert^h v^{- i} \exp\left(-\frac{q(Y) n^2 \pi}{v}\right) \sum_{\lambda \in K'} q(\lambda)^l \exp\left(- \pi \varepsilon v\left(y_2 / y_1 \lambda_1^2 + y_1 / y_2 \lambda_2^2 + q(\lambda_D) \right) \right) \\
&\times (y_2 / y_1 \lambda_1^2 + y_1 / y_2 \lambda_2^2 + q(\lambda_D))^{j} \exp\left(- \pi \varepsilon v\left(y_2 / y_1 \lambda_1^2 + y_1 / y_2 \lambda_2^2 + q(\lambda_D) \right)\right).
\end{align*}
Again we split up the exponential term Lemma \ref{lem:ElementaryLemma} to obtain
\begin{align*}
\lvert Y \rvert^h \exp\left(-\frac{q(Y) n^2 \pi}{v}\right) &= \lvert Y \rvert^h \exp\left(-\frac{q(Y) n^2 \pi}{4v}\right) \exp\left(-\frac{3q(Y) n^2 \pi}{4v}\right) \\
&\ll v^{\frac{h}{2}} \exp\left(-\frac{3q(Y) n^2 \pi}{4v}\right).
\end{align*}
We obtain
\begin{align*}
& g_{h, i, j, l}(v, Y, n) \\
&\ll v^{- i - j + \frac{h}{2}} \exp\left(-\frac{3q(Y) n^2 \pi}{4v}\right) \sum_{\lambda \in K'} q(\lambda)^l \exp\left(- \pi \varepsilon v\left(y_2 / y_1 \lambda_1^2 + y_1 / y_2 \lambda_2^2 + q(\lambda_D) \right) \right),
\end{align*}
where we have used Lemma \ref{lem:ElementaryLemma} again. Now use $y_1 > t^{-1}, y_2 / y_1 > t^{-2}, q(Y) > \frac{y_1 y_2}{1 + t^4}$ and $q(Y) > t^{-4}$ on every Siegel domain $\calS_t$ to obtain
\begin{align*}
& g_{h, i, j, l}(v, Y, n) \\
&\ll v^{- i - j + \frac{h}{2}} \exp\left(-\frac{q(Y) n^2 \pi}{4 v}\right) \\
&\sum_{\lambda \in K'} q(\lambda)^l \exp\left(- \pi \varepsilon v\left(\frac{y_2 \lambda_1^2}{y_1} + q(\lambda_D) \right) - \frac{q(Y) n^2 \pi}{4 v} - 2 \pi \varepsilon v \frac{y_1 \lambda_2^2}{y_2} - \frac{q(Y) n^2 \pi}{4 v} \right) \\
&\leq v^{- i - j + \frac{h}{2}} \exp\left(-\frac{q(Y) n^2 \pi}{4 v}\right) \\
&\sum_{\lambda \in K'} q(\lambda)^l \exp\left(- \pi \varepsilon v\left(\frac{\lambda_1^2}{t^2} + q(\lambda_D) \right) - \frac{\pi}{4 t^4 v} - 2 \pi \varepsilon v \frac{y_1 \lambda_2^2}{y_2} - \frac{y_1 y_2 \pi}{4 v (1 + t^4)} \right) \\
&\ll v^{- i - j + \frac{h}{2}} \exp\left(-\frac{q(Y) n^2 \pi}{4 v}\right) \sum_{\lambda \in K'} q(\lambda)^l \exp\left(- \pi \sqrt{\varepsilon} \left(\lvert \lambda_1 \rvert + t \lvert \lambda_D \rvert + \frac{\sqrt{2} \lvert \lambda_2 \rvert}{t \sqrt{1 + t^4}}\right) \right) \\
&\ll v^{- i - j + \frac{h}{2}} \exp\left(-\frac{q(Y) n^2 \pi}{4 v}\right).
\end{align*}
\end{proof}

\begin{prop}\label{prop:ThetaSiegelDomainGrowth}
Let $C \in \IR_{>0}$. The theta function $\Theta_L(\tau, Z)$ is bounded by a constant times
$$v^s \lvert Y \rvert^{1 - \kappa} \left(1 + \delta_{\kappa, 0} \frac{\lvert Y \rvert}{y_1}\right)$$
for all $Z \in \calS_t$ and $\tau \in \IH$ with $\Im(\tau) > C$ and some $s \in \IR$. The constant only depends on the constant $C$. Similarly, the function $\Omega_\kappa \overline{\Theta_L} = \overline{\Delta_k \Theta_L}$ is bounded by
$$q(Y)^{\frac{1 - \kappa}{2}}.$$
\end{prop}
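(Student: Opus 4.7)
The plan is to combine the Poincaré expansion of $\Theta_L(\tau, Z)$ displayed just above with the pointwise bounds on $f, g_h, \Delta_k f, \Delta_k g_h$ from the preceding two lemmas. That expansion exhibits $\Theta_L(\tau, Z)$ as the $\delta_{\kappa, 0}$-term $\frac{q(Y)}{2y_1}\Theta_D(\tau) \cdot (\sum_{m_d, m_z} \frake_{\cdots})$ plus two Poincaré sums of the form $\sum_{M \in \Gamma_\infty \bs \Gamma} \sum_{n \geq 1}(\cdots)\vert_{k, L} M$, so I would bound each summand individually and verify absolute convergence of the Poincaré sums uniformly on the region $\{(\tau, Z) \mid Z \in \calS_t,\, \Im\tau > C\}$.

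The $\delta_{\kappa, 0}$-term is the easiest to handle: $\Theta_D(\tau)$ is a theta series attached to the negative-definite lattice $D$ and is uniformly bounded in $\tau$ for $\Im\tau > C$. On $\calS_t$ one has $q(Y) < y_1 y_2 \leq y_1 \lvert Y \rvert$, so this term is bounded by a constant times $\delta_{\kappa, 0}\lvert Y \rvert / y_1$, which matches the extra factor in the claim (noting $\lvert Y \rvert^{1 - \kappa} = 1$ when $\kappa = 0$).

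For the two Poincaré series the prefactor $\lvert Y \rvert^{1 - \kappa}$ already sits outside the sum, so the task reduces to showing that what remains is bounded by a power $v^s$. The slash action $\vert_{k, L} M$ replaces $\tau$ by $M\tau$, multiplies by a function of absolute value $\lvert c\tau + d\rvert^{-k}$, and sends $v$ to $\Im(M\tau) = v/\lvert c\tau+d\rvert^2$. Substituting the bound on $f$ yields a termwise estimate of the form
$$\lvert c\tau + d\rvert^{-k} \left(\frac{v}{\lvert c\tau+d\rvert^2}\right)^{s_1} \exp\!\left(-\frac{\pi n^2 \lvert c\tau+d\rvert^2}{3 v t^2}\right),$$
and analogously for $g_h$ with $\exp(-q(Y)\pi n^2 \lvert c\tau+d\rvert^2/(4v))$ in place of the exponential. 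I would then sum over $n \geq 1$ with weight $n^{\kappa-1}$ (respectively $n^h$, $0 \leq h \leq \kappa$), obtaining a Gaussian-type tail bounded by a fixed power of $v/\lvert c\tau+d\rvert^2$, and then sum over coprime $(c, d) \in \IZ^2$: the trivial coset $(c, d) = (0, \pm 1)$ contributes a bounded power of $v$, and the rest is controlled by majorizing the Gaussian-in-$\lvert c\tau+d\rvert$ factor by an integral over the $(c, d)$-plane. This yields the required $v^s \lvert Y \rvert^{1-\kappa}$ bound.

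For the Laplacian statement I would use $\Omega_\kappa \overline{\Theta_L(\tau, Z)} = \overline{\Delta_k \Theta_L(\tau, Z)}$ from Remark \ref{rem:Shaul} together with the fact that $\Delta_k$ commutes with $\vert_{k, L}$, so $\Delta_k$ can be pushed inside the Poincaré sum. The bounds on $\Delta_k f$ and $\Delta_k g_h$ in the preceding two lemmas differ from those on $f, g_h$ only in the power of $v$, so the same summation argument goes through and produces a bound of the form $q(Y)^{(1-\kappa)/2}$ after using $q(Y) \asymp y_1 y_2$ on $\calS_t$. The main obstacle throughout will be the bookkeeping of exponents: the factor $\lvert c\tau+d\rvert^{-k}$ from the slash operator is only of polynomial decay and would not, by itself, force absolute convergence of the sum over $M$, so I have to split the Gaussian in $n\lvert c\tau+d\rvert/\sqrt{v}$ in such a way that enough decay in $\lvert c\tau+d\rvert$ survives the $n$-sum. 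Once that exponential splitting is carried out (exactly as in the proofs of the preceding two lemmas), the remainder is a routine Gaussian comparison.
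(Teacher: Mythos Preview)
Your proposal is correct and follows the same approach as the paper. The paper's own proof is a one-line appeal to the two preceding lemmas together with the commutation of $\Delta_k$ with the slash operator; you have simply spelled out the bookkeeping of the Poincar\'e summation that the paper leaves implicit, including the standard trick of splitting the Gaussian $\exp(-A n^2 \lvert c\tau+d\rvert^2/v)$ so that part of the decay survives the $n$-sum and forces convergence of the $(c,d)$-sum.
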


\begin{proof}
This is now a direct consequence of the previous two Lemmas together with the fact that the Laplace operator $\Delta_k$ commutes with the slash operator $\vert_{k, L}$.
\end{proof}

\begin{cor}\label{cor:ThetaSquareIntegrable}
Both, the theta function $\Theta_L$ and $\Omega_\kappa \overline{\Theta_L}$ are square-integrable for $l \geq 3$ and $\kappa = \frac{l}{2} - 1 + k > 0$.
\end{cor}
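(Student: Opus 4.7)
The plan is to reduce the global square-integrability of $\Theta_L$ and $\Omega_\kappa \overline{\Theta_L}$ over the quotient $\Gamma(L) \bs \IH_l$ to a local estimate on a single Siegel domain, and then combine Proposition \ref{prop:ThetaSiegelDomainGrowth} with the integrability criterion of Lemma \ref{lem:SiegelDomainIntegrable}.

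First, since $\Theta_L(\tau, \cdot)$ is a weight $\kappa$ modular form in $Z$ (and likewise $\Omega_\kappa \overline{\Theta_L}$, because $\Omega_\kappa$ commutes with the slash operator, see Remark \ref{rem:Shaul}), the squared Petersson norm in question takes the form
$$\int_{\Gamma(L) \bs \IH_l} \lvert \Theta_L(\tau, Z) \rvert^2 \, q(Y)^\kappa \, \frac{\mathrm{d}X \mathrm{d}Y}{q(Y)^l},$$
and similarly with $\Omega_\kappa \overline{\Theta_L}$. By Proposition 4.10 of \cite{BruinierChern} (quoted in Section 3), a fundamental domain for $\Gamma(L)$ is covered by finitely many $O^+(V)$-translates of a single Siegel domain $\calS_t$; since both the Petersson fiber metric and the invariant volume element are $O^+(V(\IR))$-invariant, the problem reduces to bounding the analogous integral over $\calS_t$ alone.

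Since $\kappa = \frac{l}{2} - 1 + k > 0$, the Kronecker symbol $\delta_{\kappa, 0}$ appearing in Proposition \ref{prop:ThetaSiegelDomainGrowth} vanishes, so for any fixed $\tau$ with $\Im(\tau)$ bounded below that proposition yields
$$\lvert \Theta_L(\tau, Z) \rvert^2 \ll q(Y)^{1 - \kappa}, \qquad \lvert \Omega_\kappa \overline{\Theta_L}(\tau, Z) \rvert^2 \ll q(Y)^{1 - \kappa}$$
uniformly in $Z \in \calS_t$, where I used $\lvert Y \rvert = q(Y)^{1/2}$ (which follows from $q(X_L) = q(Y_L)$ and $(X_L, Y_L) = 0$ for $[Z_L] \in \calK^+$). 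Substituting either bound into the Petersson integral, the integrand is dominated by a constant times
$$q(Y)^{1 - \kappa} \cdot q(Y)^\kappa \cdot \frac{1}{q(Y)^l} = \frac{1}{q(Y)^{l-1}}.$$

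Finally, writing $q(Y)^{1-l} = y_1^0 \, y_2^0 \, q(Y)^1 \cdot q(Y)^{-l}$, Lemma \ref{lem:SiegelDomainIntegrable} applied with $p_1 = p_2 = 0$ and $p = 1$ guarantees the convergence of $\int_{\calS_t} q(Y)^{1-l} \, \mathrm{d}X \mathrm{d}Y$ exactly when $p_2 + p = 1 < l - 1$ and $p_1 + p_2 + 2p = 2 < l$, both of which hold precisely for $l \geq 3$. This gives the stated square-integrability. The only mildly delicate point is the reduction to a single $\calS_t$, which rests on the $O^+(V(\IR))$-invariance of the Petersson metric together with the cocycle identity $q(\Im(\sigma Z)) = q(\Im(Z))/\lvert j(\sigma, Z) \rvert^2$; once this reduction and the pointwise bounds of Proposition \ref{prop:ThetaSiegelDomainGrowth} are in hand, the conclusion is a direct application of Lemma \ref{lem:SiegelDomainIntegrable}.
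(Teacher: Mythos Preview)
Your proof is correct and follows essentially the same approach as the paper's: bound $\lvert \Theta_L \rvert^2$ and $\lvert \Omega_\kappa \overline{\Theta_L} \rvert^2$ on $\calS_t$ by $q(Y)^{1-\kappa}$ via Proposition \ref{prop:ThetaSiegelDomainGrowth} (using $\kappa>0$ to kill the $\delta_{\kappa,0}$ term), then apply Lemma \ref{lem:SiegelDomainIntegrable} with $p_1=p_2=0,\ p=1$ to conclude for $l\geq 3$. The only remark is that your justification of $\lvert Y\rvert \asymp q(Y)^{1/2}$ via properties of $X_L,Y_L$ is slightly roundabout---here $\lvert Y\rvert$ simply denotes $\sqrt{(Y,Y)}=\sqrt{2q(Y)}$ for $Y$ in the positive cone of $K\otimes\IR$---but this does not affect the argument.
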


\begin{proof}
The square is bounded by
$$q(Y)^{1 - \kappa}$$
and hence we have to show that
$$\int_{\calS_t} q(Y) \frac{\mathrm{d}X \mathrm{d}Y}{q(Y)^l} < \infty.$$
By Lemma \ref{lem:SiegelDomainIntegrable} this is the case if $l \geq 3$.
\end{proof}

\section{Orthogonal Eisenstein Series}

Let $L$ be an even lattice of signature $(2,l)$ and let $\kappa \in \IZ$. Let $\lambda \in \Iso_0(L)$ and recall the tube domain representation $\IH_l$ corresponding to a fixed $0$-dimensional cusp $z$. Denote by $\Gamma(L)_\lambda \subseteq \Gamma(L)$ the stabilizer of $\lambda$ in $\Gamma(L)$ and write $\sigma_{\lambda} \in O^+(V)$ for an element satisfying $\sigma_{\lambda} \lambda = z$. Then $q(Y)^s \vert_\kappa \sigma_{\lambda}$ has weight $\kappa$ with respect to $\Gamma(L)_\lambda$. Hence we define the non-holomorphic Eisenstein series
\begin{align*}
\calE_{\kappa, \lambda}(Z, s)
&:= \sum_{\sigma \in \Gamma(L)_\lambda \bs \Gamma(L)} q(Y)^s \vert_\kappa \sigma_{\lambda}\sigma \\
&= \sum_{\sigma \in \Gamma(L)_\lambda \bs \Gamma(L)} j(\sigma_{\lambda} \sigma, Z)^{-\kappa} \left(\frac{q(Y)}{\lvert j(\sigma_{\lambda} \sigma, Z) \rvert^2}\right)^s \\
&= \sum_{\sigma \in \Gamma(L)_\lambda \bs \Gamma(L)} (\lambda, \sigma(Z_L))^{-\kappa} \left(\frac{q(Y)}{\lvert (\lambda, \sigma(Z_L)) \rvert^2}\right)^s
\end{align*}
for $Z \in \IH_l$ and $\Re(s) \gg 0$. The Eisenstein series does not depend on the choice of $\sigma_\lambda$. We have $\Gamma(L)_{-\lambda} = \Gamma(L)_\lambda$ and $\calE_{\kappa, -\lambda}(Z, s) = (-1)^\kappa \calE_{\kappa, \lambda}$. We have
$$\Omega_\kappa \calE_{\kappa, \lambda}(Z, s) = s\left(s + \kappa - \frac{l}{2}\right) \calE_{\kappa, \lambda}(Z, s),$$
i.e. the harmonic points of $\calE_{\kappa, \lambda}$ are $s = 0$ and $s = \frac{l}{2} - \kappa$. Recall the map
$$\pi_L : \Gamma(L) \bs \Iso_0(L) \to \Gamma(L) \bs \Iso_0(L') \to \Iso(L' / L).$$
For $\delta \in \Iso(L' / L)$ we let
$$\calG_{\kappa, \delta} := \sum_{\lambda \in \pi_L^{-1}(\delta)} \calE_{\kappa, \lambda},$$
in particular, $\calG_{\kappa, \delta} = 0$ if the preimage is empty and if $\delta = -\delta$ for odd $\kappa$.

For a vector-valued modular form $f : \IH \to \IC[L ' / L]$ of weight $k$ consider the additive Borcherds lift
$$\Phi(Z, f) := \int_{\SL_2(\IZ) \bs \IH}^{\text{reg}} \langle f(\tau), \Theta_L(\tau, Z) \rangle \mathrm{d}\tau,$$
if it exists. Here, the regularization is defined by the constant term at $t = 0$ of the Laurent expansion of
$$\lim_{T \to \infty} \int_{\calF_T} \langle f(\tau), \Theta_L(\tau, Z) \rangle v^{-t} \mathrm{d}\tau,$$
where $\calF_T$ is a truncated fundamental domain. For the lift of the vector-valued non-holomorphic Eisenstein series $E_{k, \beta}(\tau, s)$ we write $\Phi_{k, \beta}(Z, s)$. We have the following

\begin{thm}[{\cite[Theorem 8.1]{Kiefer}}]\label{thm:ThetaLiftIsEisensteinSeries}
The theta lift is equal to
\begin{align*}
\Phi_{k, \beta}(Z, s) &= \frac{\Gamma(s + \kappa)}{(-2\pi i)^{\kappa}\pi^{s}} \sum_{\lambda \in \Gamma(L) \bs \Iso_0(L)} N_\lambda^{2s + \kappa} \zeta_+^{k_{\lambda\beta}}(2s + \kappa) \calE_{\kappa, \lambda}(Z, s) \\
&= \frac{\Gamma(s + \kappa)}{(-2\pi i)^{\kappa}\pi^{s}} \sum_{\delta \in \Iso(L' / L)} N_\delta^{2s + \kappa} \zeta_+^{k_{\delta\beta}}(2s + \kappa) \calG_{\kappa, \delta}(Z, s),
\end{align*}
where $k_{\delta\beta} \in \IZ / N_\delta \IZ$ with $\beta = k_{\delta \beta} \delta$ (and the corresponding summands vanish if such a $k_{\delta \beta}$ does not exist), $N_\lambda$ is the level of $\lambda$ and $N_\delta$ is the order of $\delta$.
\end{thm}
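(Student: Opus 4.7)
The plan is to compute the regularised theta integral by Rankin--Selberg unfolding, combined with the explicit Poincar\'e-series expansion of the Siegel theta function established at the end of Section~\ref{sec:SiegelthetaFunction}. Substituting the definition of $E_{k,\beta}(\tau,s)$ as a Poincar\'e sum over $\tilde{\Gamma}_\infty \bs \Mp_2(\IZ)$ and using that $\Theta_L(\tau,Z)$ is modular of weight $k$ in $\tau$ for $\rho_L$, the slash operator on $\frake_\beta$ is absorbed by $\tau \mapsto M^{-1}\tau$, and I obtain (in the regularised sense, with absolute convergence for $\Re(s) \gg 0$ justifying all manipulations)
$$\Phi_{k,\beta}(Z,s) = \int^{\reg}_{\Gamma_\infty \bs \IH} v^{s+k-2}\, \langle \frake_\beta, \Theta_L(\tau, Z)\rangle\, du\, dv.$$

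Next, I insert the Poincar\'e expansion of $\Theta_L(\tau, Z)$ associated with a primitive isotropic vector $z \in \Iso_0(L)$, as derived at the end of Section~\ref{sec:SiegelthetaFunction}, writing $\Theta_L$ as a boundary term plus a sum over $M \in \Gamma_\infty \bs \SL_2(\IZ)$, $n \geq 1$, and $m_z \in \IZ/N_z\IZ$. Pairing with $\frake_\beta$ and integrating over $u \in [0,1]$, the inner character sum $\sum_{m_z} \delta_{\beta,\, m_z z/N_z - n c z'}\, e(-m_z n/N_z)$ forces $n$ to lie in a fixed residue class $k_{z\beta} \pmod{N_z}$ determined by $\beta \equiv k_{z\beta}\, \pi_L(z) \pmod L$, and discards every cusp $z$ not satisfying this compatibility. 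The $u$-integral of the boundary term vanishes for $\beta \neq 0$ and is absorbed by the regularisation for $\beta = 0$.

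After these reductions, the $v$-integral of each surviving summand has, up to explicit powers of $v^{1/2}$ and $-2 \pi i$ coming from the theta expansion, the shape
$$\int_0^\infty v^{s+\kappa-1} \exp\!\left(-\frac{\pi q(Y) n^2}{v\, z_{\nu^+}^2}\right) \frac{dv}{v}\, \cdot\, j(\sigma_z M, Z)^{-\kappa}\, \lvert j(\sigma_z M, Z)\rvert^{-2s},$$
whose Mellin transform evaluates to
$$\Gamma(s+\kappa)\, \pi^{-(s+\kappa)}\, n^{-(2s+\kappa)}\, z_{\nu^+}^{2(s+\kappa)} \cdot j(\sigma_z M, Z)^{-\kappa}\left(\frac{q(Y)}{\lvert j(\sigma_z M, Z)\rvert^2}\right)^{s}.$$
Summing $n$ over the residue class $k_{z\beta} \pmod{N_z}$ produces the Hurwitz-type zeta factor $N_z^{2s+\kappa}\, \zeta_+^{k_{z\beta}}(2s+\kappa)$, while the outer sum over $M \in \Gamma_\infty \bs \SL_2(\IZ)$ matches the orbit sum $\sum_{\sigma \in \Gamma(L)_z \bs \Gamma(L)}$ in the definition of $\calE_{\kappa, z}(Z, s)$, since the Levi $\SL_2$-factor of the Jacobi stabiliser $\Gamma(L)_z$ is conjugated onto the $\SL_2(\IZ)$ acting on $\tau$ by $\sigma_z^{-1}$. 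Summing over $z \in \Gamma(L)\bs\Iso_0(L)$ yields the first formula; the second follows by partitioning along the fibres of $\pi_L$ and using $\calG_{\kappa,\delta} = \sum_{\lambda \in \pi_L^{-1}(\delta)}\calE_{\kappa,\lambda}$, since $N_\lambda$ and $k_{\lambda\beta}$ depend only on $\delta = \pi_L(\lambda)$.

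The main obstacle is not the strategy but the bookkeeping of every factor of $v$, $z_{\nu^+}$, $(-2\pi i)^\kappa$, and $N_z$ through the two nested Poincar\'e expansions, together with the verification that the coset space $\Gamma_\infty \bs \SL_2(\IZ)$ indexing the theta expansion at the cusp $z$ is really identified with $\Gamma(L)_z \bs \Gamma(L)$ in the definition of the orthogonal Eisenstein series. All convergence issues are taken care of by the regularisation, since each individual orbit integral converges absolutely for $\Re(s) \gg 0$ thanks to the rapid decay in $v$ exhibited by the Poincar\'e expansion of the Siegel theta function.
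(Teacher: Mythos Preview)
The paper does not actually prove this statement; it is quoted verbatim from \cite[Theorem 8.1]{Kiefer}. Nevertheless, your proposed argument contains a genuine error that would prevent it from going through.

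The Poincar\'e expansion of $\Theta_L$ derived in Section~\ref{sec:SiegelthetaFunction} is attached to a \emph{single fixed} primitive isotropic vector $z \in \Iso_0(L)$: it expresses $\Theta_L$ as a constant term plus a Poincar\'e sum over $M \in \Gamma_\infty \bs \SL_2(\IZ)$ whose summands involve the theta function $\Theta_K$ of the \emph{smaller} lattice $K = L \cap z^\perp \cap z'^\perp$. Two of your assertions about this expansion are incorrect. First, the coset space $\Gamma_\infty \bs \SL_2(\IZ)$ indexing this expansion lives entirely in the $\tau$-variable and has no relation whatsoever to the orthogonal coset space $\Gamma(L)_z \bs \Gamma(L)$ appearing in the definition of $\calE_{\kappa,z}(Z,s)$; the ``Levi $\SL_2$'' you invoke does not account for the full orthogonal orbit. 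Second, there is no sum over $z \in \Gamma(L) \bs \Iso_0(L)$ anywhere in the expansion, so the final ``Summing over $z$'' step has nothing to sum.

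The argument that does work bypasses the Poincar\'e expansion entirely. After the Rankin--Selberg unfolding you describe correctly, insert the \emph{raw} definition
\[
\langle \frake_\beta, \Theta_L(\tau,Z) \rangle
= \frac{v^{l/2}}{2(-2i)^\kappa q(Y)^\kappa} \sum_{\lambda \in \beta + L} \overline{(\lambda, Z_L)}^{\,\kappa}\, e\bigl(\overline{\tau}\, q(\lambda_{Z_L}) + \tau\, q(\lambda_{Z_L^\perp})\bigr).
\]
The $u$-integral over $[0,1]$ kills all terms with $q(\lambda) \neq 0$; for the surviving isotropic $\lambda$ one has $q_{Z_L}(\lambda) = \lvert(\lambda,Z_L)\rvert^2 / (4 q(Y))$, and the $v$-integral is a Gamma integral producing the factor $\Gamma(s+\kappa)\pi^{-s-\kappa}$ together with $q(Y)^s (\lambda, Z_L)^{-\kappa} \lvert(\lambda, Z_L)\rvert^{-2s}$. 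Now parametrise the nonzero isotropic $\lambda \in \beta + L$: each such $\lambda$ is uniquely $\frac{m}{N_\mu}\mu$ for some $\mu \in \Iso_0(L)$ and $m \geq 1$ with $m \equiv k_{\mu\beta} \pmod{N_\mu}$. The sum over $m$ gives $N_\mu^{2s+\kappa}\zeta_+^{k_{\mu\beta}}(2s+\kappa)$, and the remaining sum over \emph{all} $\mu \in \Iso_0(L)$ is broken into $\Gamma(L)$-orbits using $(\sigma^{-1}\mu, Z_L) = (\mu, \sigma Z_L)$, which is precisely what produces $\calE_{\kappa,\mu}(Z,s)$ for each orbit representative.
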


We also have the following

\begin{thm}[{\cite[Theorem 8.2]{Kiefer}}]
The theta lifts $\Phi_{k,\beta}(Z, s)$ generate the space of Eisenstein series $\calG_{\kappa, \delta}(Z, s)$ for $\Gamma(L)$. In particular, if $\pi_L$ is injective (resp. surjective), then the theta lift is surjective (resp. injective) onto (resp. on) non-holomorphic Eisenstein series.
\end{thm}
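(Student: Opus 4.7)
The plan is to deduce both assertions from Theorem \ref{thm:ThetaLiftIsEisensteinSeries} by analyzing the coefficient matrix
$$A(s)_{\beta, \delta} := N_\delta^{2s + \kappa}\, \zeta_+^{k_{\delta \beta}}(2s + \kappa), \quad \beta, \delta \in \Iso(L'/L),$$
where we set $A(s)_{\beta,\delta} := 0$ whenever $\beta \notin \langle \delta \rangle$ (so that $k_{\delta\beta}$ is undefined). The inclusion $\mathrm{span}\{\Phi_{k,\beta}\} \subseteq \mathrm{span}\{\calG_{\kappa,\delta}\}$ is immediate from Theorem \ref{thm:ThetaLiftIsEisensteinSeries}, so the theorem reduces to showing that $A(s)$, considered as a matrix of meromorphic functions of $s$, has maximal rank on the columns $\delta$ for which $\calG_{\kappa,\delta} \not\equiv 0$.

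The first step is the structural observation that $A(s)_{\beta,\delta} \neq 0$ forces $\langle \beta \rangle \subseteq \langle \delta \rangle$. Ordering the cyclic subgroups of $L'/L$ that are generated by isotropic elements so that smaller subgroups come first, and refining to a total order on $\Iso(L'/L)$, this renders $A(s)$ block upper triangular. The diagonal block for a cyclic subgroup $C = \langle \delta_0 \rangle$ of order $N$ is indexed by the generators $u\delta_0$ of $C$ with $u \in (\IZ/N\IZ)^\times$, and has entries $N^{2s+\kappa}\zeta_+^{u_1 u_2^{-1}}(2s+\kappa)$.

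Up to the scalar $N^{2s+\kappa}$ this diagonal block is the matrix of the convolution operator on the group algebra $\IC[(\IZ/N\IZ)^\times]$ attached to the function $u \mapsto \zeta_+^u(2s+\kappa)$. By Fourier analysis on the finite abelian group $(\IZ/N\IZ)^\times$, this operator is diagonalized by Dirichlet characters mod $N$, with the eigenvalue at $\chi$ being (up to standard normalisation) the Dirichlet $L$-function $L(2s+\kappa,\overline\chi)$, since $\zeta_+^u(s) = \sum_{n>0,\,n\equiv u\bmod N} n^{-s}$ sums to $\sum_{\gcd(n,N)=1} \overline\chi(n) n^{-s}$ when paired with $\overline\chi$. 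Each $L(s,\chi)$ is a non-vanishing meromorphic function of $s$, so every diagonal block is invertible as a matrix of meromorphic functions, and hence so is $A(s)$. Restricting to the columns $\delta$ with $\calG_{\kappa,\delta}\not\equiv 0$ preserves surjectivity of the associated linear map, which establishes the spanning assertion.

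For the consequences: if $\pi_L$ is injective, each $\calG_{\kappa,\delta}$ is either an $\calE_{\kappa,\lambda}$ for the unique preimage $\lambda$ or zero, so $\mathrm{span}\{\calG_{\kappa,\delta}\}$ exhausts the space of non-holomorphic Eisenstein series and the theta lift is surjective onto it. If $\pi_L$ is surjective, every $\calG_{\kappa,\delta}$ is non-zero, and after quotienting out the sign symmetry $\calG_{\kappa,-\delta} = (-1)^\kappa\calG_{\kappa,\delta}$ and $\Phi_{k,-\beta} = (-1)^\kappa \Phi_{k,\beta}$, source and target have matching dimensions, so the established surjectivity forces injectivity. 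I expect the main obstacle to be the clean identification of the Fourier eigenvalues of the diagonal blocks with Dirichlet $L$-functions (including the bookkeeping of the sign symmetry and the precise definition of $\zeta_+^u$); once this is in place, the classical non-vanishing of $L(s,\chi)$ finishes the argument.
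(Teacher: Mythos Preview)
The paper does not itself prove this theorem; it is quoted from \cite{Kiefer} without argument. However, the paper later reproves the injectivity and surjectivity statements at the harmonic point $s=0$ (the two unnamed Propositions immediately following Proposition \ref{prop:HolomorphicBoundaryPart}), and those proofs use exactly your core mechanism: organise by cyclic isotropic subgroups $\langle\delta\rangle$, pass to Dirichlet characters on $(\IZ/N_\delta\IZ)^\times$, and invoke the non-vanishing of $L(\chi,\cdot)$ to invert the resulting system. Your presentation via the block upper triangular matrix $A(s)$ is a cleaner packaging of the same idea; the paper instead argues by choosing $\delta$ of minimal (for injectivity) or maximal (for surjectivity) order and writes out the character sums explicitly rather than identifying them as eigenvalues of a convolution operator on $\IC[(\IZ/N\IZ)^\times]$.

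One point you should tighten: your injectivity argument when $\pi_L$ is surjective appeals to a dimension count between source and target, but this presupposes that the $\calG_{\kappa,\delta}$ (modulo the $\pm$ identification) are linearly independent as functions of $(Z,s)$. That is true for generic $s$ by inspecting constant terms at the various $0$-dimensional cusps, but it is an input beyond the invertibility of $A(s)$ and deserves a sentence. Also, as the paper notes, $\calG_{\kappa,\delta}\equiv 0$ when $\kappa$ is odd and $\delta=-\delta$, so the claim ``every $\calG_{\kappa,\delta}$ is non-zero'' needs the sign-symmetry caveat applied before, not after, you make it.
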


Hence, instead of evaluating the Eisenstein series $\calE_{\kappa, \lambda}$ at $s = 0$, we will instead consider the theta lifts $\Phi_{k, \beta}$. Their Fourier expansion is given by

\begin{thm}[{\cite[Theorem 8.6]{Kiefer}}]\label{thm:ThetaLiftFourierExpansion}
Let $z \in \Iso_0(L)$ of level $N_z$ and let $z' \in L'$ with $(z, z') = 1$. The theta lift $\Phi_{k,\beta}(Z, s)$ has the Fourier expansion in the $0$-dimensional cusp $z$ given by
\begin{align*}
&\frac{i^\kappa }{2 \sqrt{2}\lvert Y \rvert^{\kappa-1}} \Phi^K_{k,\beta}\left(\frac{Y}{\lvert Y \rvert}, s\right) + \sum_{\lambda \in K'} b_{k,\beta}(\lambda, Y, s) e(\lambda, X),
\end{align*}
where $\Phi^K_{k,\beta}\left(\frac{Y}{\lvert Y \rvert}, s\right)$ is a theta lift corresponding to the sublattice $K$. The Fourier coefficient $b_{k,\beta}(0, Y, s)$ is given by
\begin{align*}
& \sum_{b \in \IZ / N_z \IZ} \bigg(\frac{\Gamma(s + \kappa) N_z^{2s + \kappa}}{(-2 \pi i)^\kappa \pi^s} q(Y)^s (\delta_{\beta, \frac{bz}{N_z}} + (-1)^\kappa \delta_{- \beta, \frac{bz}{N_z}}) \zeta_+^b(2s + \kappa) \\
&+ \frac{\Gamma(1 - s - k + \kappa) N_z^{2 - 2s - 2k + \kappa}}{(-2 \pi i)^\kappa \pi^{1 - s - k}} q(Y)^{1 - s - k} c_{k,\beta}\left(\frac{bz}{N_z}, 0, s\right) \zeta_+^b(2 - 2s - 2k + \kappa)\bigg).
\end{align*}
For $q(\lambda) = 0, \lambda \neq 0$ the coefficient $b_{k,\beta}(\lambda, Y, s)$ is given by
\begin{align*}
&\frac{2 \lvert(\lambda, Y) \rvert^{\frac{1}{2}}}{2^\kappa} \sum_{b \in \IZ / N_z \IZ} \bigg(\frac{q(Y)^s}{\lvert (\lambda, Y) \rvert^{s}} e\left(- \frac{(\lambda, \zeta)}{N_z}\right) \\
&\times \sum_{n \mid \lambda} n^{2s - 1 + \kappa} (\delta_{\beta, \frac{\lambda}{n} - \frac{(\lambda, \zeta)}{nN_z} z + \frac{bz}{N_z}} + (-1)^\kappa \delta_{-\beta, \frac{\lambda}{n} - \frac{(\lambda, \zeta)}{nN_z} z + \frac{bz}{N_z}}) e\left(\frac{nb}{N_z}\right) \\
&\times \sum_{h = 0}^\infty \sum_{j = 0}^\infty \frac{(-1)^{j}}{(4 \pi \lvert(\lambda, Y)\rvert)^j j!} \binom{\kappa}{h} \frac{(\kappa - h)!}{(\kappa-h-2j)!} \\
&\times \left(\frac{(\lambda, Y)}{\lvert (\lambda, Y) \rvert}\right)^{\kappa-h} K_{s - \frac{1}{2} + \kappa - h - j}(2 \pi \lvert (\lambda, Y) \rvert)\\
&+ \frac{q(Y)^{1 - s - k}}{\lvert (\lambda, Y) \rvert^{1 - s - k}} e\left(- \frac{(\lambda, \zeta)}{N_z}\right) \\
&\times \sum_{n \mid \lambda} n^{1 - 2s - 2k + \kappa} c_{k,\beta}\left(\frac{\lambda}{n} - \frac{(\lambda, \zeta)}{nN_z} z + \frac{bz}{N_z}, 0, s\right) e\left(\frac{nb}{N_z}\right) \\
&\times \sum_{h = 0}^\infty \sum_{j = 0}^\infty \frac{(-1)^{j}}{(4 \pi \lvert(\lambda, Y)\rvert)^j j!} \binom{\kappa}{h} \frac{(\kappa - h)!}{(\kappa-h-2j)!} \\
&\times \left(\frac{(\lambda, Y)}{\lvert (\lambda, Y) \rvert}\right)^{\kappa-h} K_{\frac{1}{2} - s - k + \kappa - h - j}(2 \pi \lvert (\lambda, Y ) \rvert)\bigg).
\end{align*}
For $q(\lambda) \neq 0$ the coefficient $b_{k,\beta}(\lambda, Y, s)$ is given by
\begin{align*}
& \frac{1}{\sqrt{2}\lvert Y \rvert^{\kappa-1}} \sum_{b \in \IZ / N_z \IZ} \sum_{h = 0}^\infty (2i)^{-h} \sum_{j = 0}^\infty \frac{(-1)^j i^{h}}{(8 \pi)^j j!} \binom{\kappa}{h} \frac{(\kappa-h)!}{(\kappa-h-2j)!} \lvert Y \rvert^h \left(\frac{(\lambda, Y)}{\lvert Y \rvert}\right)^{\kappa-h - 2j} \\
&\times e\left(- \frac{(\lambda, \zeta)}{N_z}\right) \sum_{n \mid \lambda} n^{2j - \kappa + 2h} e\left(\frac{nb}{N_z}\right) c_{k,\beta}\left(\frac{\lambda}{n} - \frac{(\lambda, \zeta)}{nN_z} z + \frac{bz}{N_z}, \frac{q(\lambda)}{n^2}, s\right) \\
&\times \int_0^\infty \exp\left(-\frac{\pi n^2}{2vz_Z^2} - \frac{2 \pi v q_\omega(\lambda)}{n^2}\right) \calW_s\left(4 \pi \frac{q(\lambda)}{n^2}v\right) v^{-\frac{3}{2} + \kappa - h - j} \mathrm{d}v.
\end{align*}
\end{thm}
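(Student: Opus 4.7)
The plan is to compute $\Phi_{k,\beta}(Z,s)$ by the standard unfolding of the Poincar\'e series $E_{k,\beta}$ against the Siegel theta function. Because $\Theta_L(\tau,Z)$ transforms with weight $k$ in $\tau$ under $\rho_L$, the pairing $\langle E_{k,\beta}(\tau,s),\Theta_L(\tau,Z)\rangle v^k\tfrac{\mathrm{d}u\,\mathrm{d}v}{v^2}$ is $\SL_2(\IZ)$-invariant. Unfolding $E_{k,\beta}=\tfrac12\sum_{M\in\tilde\Gamma_\infty\bs\Mp_2(\IZ)} v^s\,\frake_\beta\vert_{k,L}M$ against the regularized fundamental domain collapses the quotient to $\tilde\Gamma_\infty\bs\IH$, and the $u$-integration then extracts the constant-in-$u$ Fourier coefficient of the $\beta$-component $\theta_\beta(\tau,Z)$ of $\Theta_L$, organised by $\lambda\in L'$ according to the value of $q(\lambda)$.

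Next, I would split the analysis according to the two Poincar\'e-series descriptions of $\Theta_L(\tau,Z)$ proved in Section \ref{sec:SiegelthetaFunction}. The ``main'' piece, supported on $\frake_{m_z z/N_z}$ and containing $\Theta_K(\tau,\omega,p_{\omega,0})$, integrates to produce the theta lift $\Phi^K_{k,\beta}(Y/\lvert Y\rvert,s)$ associated to $K$, giving the first summand of the claimed expansion. The remaining Poincar\'e terms yield $\sum_{\lambda\in K'} b_{k,\beta}(\lambda,Y,s)\,e(\lambda,X)$: one expands $(\lambda,Z_L)^\kappa$ binomially via $\binom{\kappa}{h}$ to separate $X$- from $Y$-dependence, and invokes the Weil-representation lemma proved earlier on vectors of the form $\sum_m\frake_{\gamma+mz/N_z}e(-mn/N_z)$ to rewrite the matrix coefficients $\rho_{\beta,\gamma}^{-1}(M,\phi)$ as Kronecker deltas times phases. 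This produces the divisor sum $\sum_{n\mid\lambda}$, the factor $e(-(\lambda,\zeta)/N_z)$, and the sum over $b\in\IZ/N_z\IZ$ that are visible in the statement.

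It remains to perform the $v$-integrals case by case. For $\lambda=0$ the integrand reduces to two pure powers of $v$ corresponding to the $v^s\frake_\beta$ and constant-coefficient $v^{1-s-k}$ parts of $E_{k,\beta}$; after regularization these Mellin integrals evaluate to the $\Gamma(s+\kappa),\Gamma(1-s-k+\kappa)$ prefactors together with the $\zeta_+^b(2s+\kappa),\zeta_+^b(2-2s-2k+\kappa)$ factors of the statement. For $q(\lambda)=0,\lambda\ne 0$ the integrand is of Gauss--Bessel shape and the classical identity $\int_0^\infty v^{\nu-1}\exp(-Av-B/v)\,\mathrm{d}v=2(B/A)^{\nu/2}K_{\nu}(2\sqrt{AB})$ produces the double sum over $h,j$ with the Bessel factors $K_{s-1/2+\kappa-h-j}$ and $K_{1/2-s-k+\kappa-h-j}$. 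For $q(\lambda)\ne 0$ one keeps the Whittaker factor $\calW_s(4\pi q(\lambda)v/n^2)$ inside the integral and records the displayed integral representation as is. The principal obstacle is the regularization: since $E_{k,\beta}(\tau,s)$ grows like $v^s+v^{1-s-k}$ at $v=\infty$, the $\lambda=0$ Mellin integrals genuinely diverge in any half-plane of $s$ and only make sense after the $\int v^{-t}$ regularization, so one must carefully justify the interchange of regularization with the $u$-integration and with the sum over $\lambda$, and identify the regularized integrals with the correct values of $\zeta_+^b$; a secondary but substantial task is the constant bookkeeping (powers of $i,\pi,\sqrt{2},N_z$ and $\Gamma$-factors) needed to match the precise normalization of $E_{k,\beta}(\tau,s)$ used here.
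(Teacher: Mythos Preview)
The paper does not contain a proof of this theorem: it is quoted verbatim from \cite[Theorem~8.6]{Kiefer} and used as a black box, so there is no argument here to compare your proposal against. That said, your outline follows exactly the standard route one would expect (and the one implicit in the surrounding material of the paper): unfold $E_{k,\beta}$ against $\Theta_L$, use the Borcherds-type Poincar\'e expansion of $\Theta_L$ with respect to $z$ derived in Section~\ref{sec:SiegelthetaFunction} to split off the $\Phi^K$ term, insert the Fourier expansion of $E_{k,\beta}$, and evaluate the $v$-integrals case by case via the Gauss--Bessel identity. Your identification of the regularization of the $\lambda=0$ Mellin integrals and the constant bookkeeping as the two genuine labor-intensive points is accurate; neither is a conceptual obstacle, only a careful computation.
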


\section{Theta Lifts at Harmonic Points}

We will now consider the theta lifts at their harmonic points $s = 0$ and for $k = 0$ at $s = 1$. We set $\Phi_\frakv(Z) = \Phi_\frakv(Z, 0)$ and $b_{k,\beta}(\lambda, Y) = b_{k,\beta}(\lambda, Y, 0)$. We need the following

\begin{lem}\label{lem:SumOfKBesselFunctions}
We have
\begin{align*}
&\sum_{h = 0}^\infty \sum_{j = 0}^\infty \frac{(-1)^{j}}{(4 \pi \lvert y \rvert)^j j!} \binom{\kappa}{h} \frac{(\kappa-h)!}{(\kappa-h-2j)!} \left(\frac{y}{\lvert y \rvert}\right)^{\kappa-h} K_{-\frac{1}{2} + \kappa - h - j}\left(2 \pi \lvert y \rvert \right) \\
&= 2^{\kappa - 1} y^{-\frac{1}{2}} e^{-2 \pi y}.
\end{align*}
if $y > 0$ and the left hand side vanishes otherwise.
\end{lem}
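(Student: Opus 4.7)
The plan is to substitute the closed-form expression for the half-integer-order modified Bessel function of the second kind,
\begin{align*}
K_{n+\tfrac{1}{2}}(x) = \sqrt{\tfrac{\pi}{2x}}\, e^{-x} \sum_{k=0}^{n} \frac{(n+k)!}{k!\,(n-k)!\,(2x)^k} \quad (n \geq 0),
\end{align*}
together with the symmetry $K_{-\nu} = K_{\nu}$, into the double sum on the left-hand side, and reduce the claim to a polynomial identity in $1/\lvert y \rvert$. The sum is in fact finite in both indices because $\binom{\kappa}{h}$ vanishes for $h > \kappa$ and the coefficient $(\kappa-h)!/(\kappa-h-2j)!$ vanishes unless $2j \leq \kappa - h$.

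First I would apply the formula above with $x = 2\pi\lvert y \rvert$ and $n = \kappa - h - j - 1$, treating the boundary case $\kappa - h - j = 0$ separately via $K_{-1/2}(x) = K_{1/2}(x)$; one checks that this boundary only arises for $h = \kappa$, $j = 0$. Factoring out the common prefactor $e^{-2\pi\lvert y\rvert}/(2\sqrt{\lvert y\rvert})$ and writing $\epsilon := y/\lvert y \rvert \in \{\pm 1\}$, the left-hand side takes the form
\begin{align*}
\frac{e^{-2\pi\lvert y\rvert}}{2\sqrt{\lvert y\rvert}} \sum_{m=0}^{\kappa-1} \frac{C_m(\kappa, \epsilon)}{(4\pi\lvert y\rvert)^m},
\end{align*}
where $C_m(\kappa, \epsilon)$ is a finite combinatorial sum collecting all triples $(h,j,k)$ with $j + k = m$. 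The target identity then reduces to two claims: $C_0(\kappa, \epsilon) = (1+\epsilon)^\kappa$ (which is $2^\kappa$ for $y > 0$ and $0$ for $y < 0$, provided $\kappa \geq 1$), and $C_m(\kappa, \epsilon) = 0$ for all $m \geq 1$ and both signs of $\epsilon$.

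The $m = 0$ claim is immediate: only $(j,k) = (0,0)$ survives and one obtains $\sum_{h=0}^{\kappa} \binom{\kappa}{h} \epsilon^{\kappa - h} = (1+\epsilon)^\kappa$ by the binomial theorem. For $m \geq 1$ I would carry out the inner sum over $j$ for fixed $h$ (with $k = m - j$); after cancellation of factorials this inner sum becomes a terminating hypergeometric expression that can be evaluated in closed form via a Chu--Vandermonde-type identity, and the remaining outer sum over $h$ then collapses to zero by a standard alternating binomial identity, independently of the sign of $\epsilon$. Putting these pieces together yields $\text{LHS} = \frac{e^{-2\pi y}}{2\sqrt{y}} \cdot 2^\kappa = 2^{\kappa-1} y^{-1/2} e^{-2\pi y}$ for $y > 0$ and $\text{LHS} = 0$ for $y < 0$.

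The main obstacle is the verification of the vanishing of $C_m(\kappa, \epsilon)$ for $m \geq 1$; while elementary, this requires careful bookkeeping of the indices and of the boundary contributions from the $K_{-1/2}$ terms. A possibly cleaner alternative I would consider is induction on $\kappa$: the base case $\kappa = 1$ follows directly from $K_{\pm 1/2}(x) = \sqrt{\pi/(2x)}\, e^{-x}$, and the inductive step from $\kappa$ to $\kappa+1$ should go through via the binomial recursion $\binom{\kappa+1}{h} = \binom{\kappa}{h} + \binom{\kappa}{h-1}$ combined with the three-term recurrence $K_{\nu+1}(x) = K_{\nu-1}(x) + \tfrac{2\nu}{x} K_\nu(x)$, which together ought to telescope the difference between consecutive values of $\kappa$ to zero.
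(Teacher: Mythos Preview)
Your approach is essentially the same as the paper's: substitute the explicit half-integer Bessel formula, observe that only the $(j,k)=(0,0)$ contributions survive, and then evaluate the remaining binomial sum $\sum_h \binom{\kappa}{h}\epsilon^{\kappa-h}=(1+\epsilon)^\kappa$. The paper does not carry out the combinatorial cancellation (your $C_m(\kappa,\epsilon)=0$ for $m\geq 1$) directly but instead invokes \cite[Corollary 14.2]{Borcherds} for it; your proposed hypergeometric/Chu--Vandermonde evaluation or the induction on $\kappa$ via the three-term recurrence are reasonable ways to make this self-contained, though neither is fully worked out in the proposal.
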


\begin{proof}
This is done in \cite[Theorem 14.3]{Borcherds}. We use the formula
\begin{align*}
K_{n + \frac{1}{2}}(z) = \sqrt{\frac{\pi}{2z}} e^{-z} \sum_{0 \leq m} (2z)^{-m} \frac{(n + m)!}{m! (n - m)!}
\end{align*}
and observe that the terms with $m \neq 0, j \neq 0$ cancel out according to \cite[Corollary 14.2]{Borcherds}. Hence we obtain
\begin{align*}
&\sum_{h = 0}^\infty \sum_{j = 0}^\infty \frac{(-1)^{j}}{(4 \pi \lvert y \rvert)^j j!} \binom{\kappa}{h} \frac{(\kappa-h)!}{(\kappa-h-2j)!} \left(\frac{y}{\lvert y \rvert}\right)^{\kappa-h} K_{-\frac{1}{2} + \kappa - h - j}\left(2 \pi \lvert y \rvert \right) \\
&= \frac{1}{2} \sum_{h = 0}^\infty \binom{\kappa}{h} \left(\frac{y}{\lvert y \rvert}\right)^{\kappa-h} \lvert y \rvert^{-\frac{1}{2}} e^{-2 \pi \lvert y \rvert}.
\end{align*}
Now the sum over $h$ vanishes for $y < 0$ and is equal to $2^\kappa$ for $y > 0$, which shows the result.
\end{proof}

Now we can calculate the Fourier coefficient for $q(\lambda) \geq 0$ at $s = 0$.

\begin{lem}\label{lem:ThetaLiftFourierExpansions=0}
For $q(\lambda) > 0$ the coefficient $b_{k,\beta}(\lambda, Y)$ is equal to
\begin{align*}
\sum_{b \in \IZ / N_z \IZ} e\left(- \frac{(\lambda, \zeta)}{N_z}\right) \sum_{n \mid \lambda} n^{\kappa - 1} e\left(\frac{nb}{N_z}\right) c_{k,\beta}\left(\frac{\lambda}{n} - \frac{(\lambda, \zeta)}{nN_z} z + \frac{bz}{N_z}, \frac{q(\lambda)}{n^2}\right) e(\lambda, iY),
\end{align*}
if $(\lambda, Y) > 0$ and $b_{k,\beta}(\lambda, Y, 0) = 0$ if $(\lambda, Y) < 0$. For $q(\lambda) = 0$ with $(\lambda, Y) > 0$ the Fourier coefficient $b_{k,\beta}(\lambda, Y)$ is given by
\begin{align*}
& \sum_{b \in \IZ / N_z \IZ} \bigg(e\left(- \frac{(\lambda, \zeta)}{N_z}\right) \sum_{n \mid \lambda} n^{\kappa - 1} e\left(\frac{nb}{N_z}\right) \\
&\times (\delta_{\beta, \frac{\lambda}{n} - \frac{(\lambda, \zeta)}{nN_z} z + \frac{bz}{N_z}} + (-1)^\kappa \delta_{-\beta, \frac{\lambda}{n} - \frac{(\lambda, \zeta)}{nN_z} z + \frac{bz}{N_z}}) e(\lambda, iY) \\
&+ \frac{2 q(Y)^{1 - k}}{\lvert (\lambda, Y) \rvert^{\frac{1}{2} - k}} e\left(- \frac{(\lambda, \zeta)}{N_z}\right) \sum_{n \mid \lambda} n^{1 - 2k + \kappa} c_{k,\beta}\left(\frac{\lambda}{n} - \frac{(\lambda, \zeta)}{nN_z} z + \frac{bz}{N_z}, 0\right) e\left(\frac{nb}{N_z}\right) \\
&\times \sum_{h = 0}^\infty \sum_{j = 0}^\infty \frac{(-1)^{j}}{(4 \pi \lvert(\lambda, Y)\rvert)^j j!} \binom{\kappa}{h} \frac{(\kappa - h)!}{(\kappa-h-2j)!} \\
&\times \left(\frac{(\lambda, Y)}{\lvert (\lambda, Y) \rvert}\right)^{\kappa-h} K_{\frac{1}{2} - k + \kappa - h - j}(2 \pi (\lambda, Y ))\bigg).
\end{align*}
For $q(\lambda) = 0$ with $(\lambda, Y) < 0$, $b_{k,\beta}(\lambda, Y)$ is given by
\begin{align*}
& \frac{2 q(Y)^{1 - k}}{\lvert (\lambda, Y) \rvert^{\frac{1}{2} - k}} \sum_{b \in \IZ / N_z \IZ} e\left(- \frac{(\lambda, \zeta)}{N_z}\right) \sum_{n \mid \lambda} n^{1 - 2k + \kappa} c_{k,\beta}\left(\frac{\lambda}{n} - \frac{(\lambda, \zeta)}{nN_z} z + \frac{bz}{N_z}, 0\right) e\left(\frac{nb}{N_z}\right) \\
&\times \sum_{h = 0}^\infty \sum_{j = 0}^\infty \frac{(-1)^{j}}{(4 \pi \lvert(\lambda, Y)\rvert)^j j!} \binom{\kappa}{h} \frac{(\kappa - h)!}{(\kappa-h-2j)!} \left(\frac{(\lambda, Y)}{\lvert (\lambda, Y) \rvert}\right)^{\kappa-h} K_{\frac{1}{2} - k + \kappa - h - j}(2 \pi \lvert (\lambda, Y ) \rvert).
\end{align*}
\end{lem}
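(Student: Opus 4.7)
My plan is to specialize the general Fourier expansion in Theorem \ref{thm:ThetaLiftFourierExpansion} to the harmonic point $s = 0$ and then simplify the resulting $K$-Bessel sums using Lemma \ref{lem:SumOfKBesselFunctions}. The three cases listed in the statement correspond directly to the three formulas in Theorem \ref{thm:ThetaLiftFourierExpansion}, so structurally the proof is a case-by-case substitution followed by a closed-form evaluation.

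For the isotropic case $q(\lambda) = 0$, I would substitute $s = 0$ into the formula, observing that the first sum of Bessel functions acquires the index $K_{-\frac{1}{2}+\kappa-h-j}(2\pi|(\lambda,Y)|)$, which is exactly the shape handled by Lemma \ref{lem:SumOfKBesselFunctions} with $y = (\lambda, Y)$. Applying the lemma yields $2^{\kappa-1}(\lambda,Y)^{-1/2}e^{-2\pi(\lambda,Y)}$ when $(\lambda,Y) > 0$ and zero when $(\lambda,Y) < 0$. Multiplying by the outer prefactor $\tfrac{2|(\lambda,Y)|^{1/2}}{2^\kappa}$, the powers of $(\lambda,Y)$ cancel and the exponential combines with the Kronecker delta and divisor data to give the holomorphic main term $e(\lambda, iY)$ displayed in the statement. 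The second Bessel sum (coming from the $v^{1-s-k}$ piece of $E_{k,\beta}$) has index $\tfrac{1}{2}-k+\kappa-h-j$, which does \emph{not} fall into the range of Lemma \ref{lem:SumOfKBesselFunctions} and therefore must be left untouched; this explains why the stated formula retains the full Bessel expansion in the non-holomorphic term, and also why for $(\lambda,Y) < 0$ only this second block survives.

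For the positive norm case $q(\lambda) > 0$, I would first exploit the explicit value of the Whittaker function at $s = 0$: normalized as in \cite{BruinierKuehn}, one has $\calW_0(y) = e^{-y/2}$ for $y > 0$. Substituting this into the integral of Theorem \ref{thm:ThetaLiftFourierExpansion} yields
\begin{align*}
\int_0^\infty \exp\!\left(-\frac{\pi n^2}{2 v z_Z^2} - \frac{2\pi v (q_\omega(\lambda) + q(\lambda))}{n^2}\right) v^{-\frac{3}{2}+\kappa-h-j}\,\mathrm{d}v,
\end{align*}
and since $q_\omega(\lambda) + q(\lambda) = 2 q(\lambda_{\omega^+}) = (\lambda, Y)^2/Y^2$, the exponent is of the standard form $-A/v - Bv$, so the integral evaluates to a $K$-Bessel function of index $-\tfrac{1}{2}+\kappa - h - j$. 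After collecting the resulting powers of $|Y|$, $|(\lambda,Y)|$ and $n$, the double sum over $h, j$ is again in precisely the shape of Lemma \ref{lem:SumOfKBesselFunctions}, which collapses it to $e^{-2\pi(\lambda,Y)} = e(\lambda, iY)$ when $(\lambda, Y) > 0$ and to $0$ when $(\lambda, Y) < 0$. Combined with the divisor sum arising from $n \mid \lambda$ and the Fourier coefficient $c_{k,\beta}$ evaluated at $s = 0$ (where the non-holomorphic $v^{1-s-k}$ contribution disappears because $\calW_0$ already packaged it), this produces exactly the stated expression.

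The main obstacle will be the positive-norm case: one must verify that the algebraic book-keeping (powers of $n$, $|Y|$, $q(Y)$, and $(\lambda,Y)$) collapses cleanly after substituting the closed form for the Bessel integral and applying Lemma \ref{lem:SumOfKBesselFunctions}. In particular the identification $q_\omega(\lambda) + q(\lambda) = (\lambda,Y)^2/Y^2$ and the bookkeeping of the factor $z_Z^2 = 2 q(Y)/|Y|^2$ (coming from the expansion of $\Theta_L$ in Section \ref{sec:SiegelthetaFunction}) must be carried out carefully so that the argument of the Bessel function becomes $2\pi|(\lambda,Y)|$ and the prefactors match. The isotropic case is routine once the lemma is applied, and for $(\lambda,Y) < 0$ the vanishing of the holomorphic part is immediate from the lemma; together these give the three formulas.
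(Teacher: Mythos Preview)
Your proposal is correct and follows essentially the same route as the paper: specialize Theorem \ref{thm:ThetaLiftFourierExpansion} at $s=0$, use $\calW_0(y)=e^{-y/2}$ for $y>0$ together with $q_\omega(\lambda)+q(\lambda)=(\lambda,Y)^2/Y^2$ to turn the $v$-integral into a $K$-Bessel function via the standard formula, and then collapse the $(h,j)$-sum with Lemma \ref{lem:SumOfKBesselFunctions}; for $q(\lambda)=0$ one simply sets $s=0$ and applies the same lemma to the first Bessel block. One small correction: your stated value $z_Z^2 = 2q(Y)/\lvert Y\rvert^2$ is not right (with $\lvert Y\rvert^2 = 2q(Y)$ this would give $1$); the correct identification is $z_Z^2 = 1/(2q(Y))$, so that $\pi n^2/(2v z_Z^2) = \pi n^2 q(Y)/v$, which is exactly the $\beta/v$ term needed in the Bessel integral formula---but you already flagged this as a point requiring care, and it does not affect the structure of the argument.
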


\begin{proof}
Recall that for $q(\lambda) > 0$ the coefficient $b_{k,\beta}(\lambda, Y, s)$ is given by
\begin{align*}
& \sum_{b \in \IZ / N_z \IZ} \bigg(\frac{1}{\sqrt{2}\lvert Y \rvert^{\kappa-1}} \sum_{h = 0}^\infty (2i)^{-h} \sum_{j = 0}^\infty \frac{(-1)^j i^{h}}{(8 \pi)^j j!} \binom{\kappa}{h} \frac{(\kappa-h)!}{(\kappa-h-2j)!} \lvert Y \rvert^h \left(\frac{(\lambda, Y)}{\lvert Y \rvert}\right)^{\kappa-h - 2j} \\
&\times e\left(- \frac{(\lambda, \zeta)}{N_z}\right) \sum_{n \mid \lambda} n^{2j - \kappa + 2h} e\left(\frac{nb}{N_z}\right) c_{k,\beta}\left(\frac{\lambda}{n} - \frac{(\lambda, \zeta)}{nN_z} z + \frac{bz}{N_z}, \frac{q(\lambda)}{n^2}, s\right) \\
&\times \int_0^\infty \exp\left(-\frac{\pi n^2}{2vz_Z^2} - \frac{2 \pi v q_\omega(\lambda)}{n^2}\right) \calW_s\left(4 \pi \frac{q(\lambda)}{n^2}v\right) v^{-\frac{3}{2} + \kappa - h - j} \mathrm{d}v \bigg).
\end{align*}
We now plug in $s = 0$. According to \cite{BruinierKuehn} we have
\begin{align*}
\calW_0(v) = e^{-\frac{v}{2}} \cdot \begin{cases} 1 &\mbox{ if } v > 0, \\ \Gamma(1 - k, -v) &\mbox{ if } v < 0.
\end{cases}
\end{align*}
Now observe that
$$q_\omega(\lambda) + q(\lambda) = 2 q(\lambda_{\omega^+}) = \frac{(\lambda, Y)^2}{2 q(Y)}$$
and use the formula \cite[p. 313, 6.3(17)]{Erdelyi}
\begin{align*}
\int_{v = 0}^\infty \exp\left(- \alpha v - \frac{\beta}{v}\right) v^{\gamma} \mathrm{d}v = 2 \left(\frac{\beta}{\alpha}\right)^{\frac{1}{2} (\gamma + 1)} K_{\gamma + 1}(2\sqrt{\alpha \beta})
\end{align*}
with $\alpha = \frac{\pi (\lambda, Y)^2}{n^2 q(Y)}, \beta = \pi n^2 q(Y), \gamma = -\frac{3}{2} + \kappa - h - j$ to obtain
\begin{align*}
& \int_0^\infty \exp\left(-\frac{\pi n^2 q(Y)}{v} - \frac{2 \pi v q_\omega(\lambda)}{n^2}\right) \calW_0\left(4 \pi \frac{q(\lambda)}{n^2}v\right) v^{-\frac{3}{2} + \kappa - h - j} \mathrm{d}v \\
&= 2 \left(\frac{n^2 q(Y)}{\lvert (\lambda, Y) \rvert}\right)^{-\frac{1}{2} + \kappa - h - j} K_{-\frac{1}{2} + \kappa - h - j}\left(2 \pi \lvert (\lambda, Y) \rvert \right).
\end{align*}
This yields for the Fourier coefficient $b_{k,\beta}(\lambda, Y)$
\begin{align*}
& 2^{1 - \kappa} \lvert (\lambda, Y) \rvert^{\frac{1}{2}} \sum_{b \in \IZ / N_z \IZ} e\left(- \frac{(\lambda, \zeta)}{N_z}\right) \sum_{n \mid \lambda} n^{\kappa - 1} e\left(\frac{nb}{N_z}\right) c_{k,\beta}\left(\frac{\lambda}{n} - \frac{(\lambda, \zeta)}{nN_z} z + \frac{bz}{N_z}, \frac{q(\lambda)}{n^2}\right) \\
&\times \sum_{h = 0}^\infty \sum_{j = 0}^\infty \frac{(-1)^{j}}{(4 \pi \lvert (\lambda, Y) \rvert)^j j!} \binom{\kappa}{h} \frac{(\kappa-h)!}{(\kappa-h-2j)!} \left(\frac{(\lambda, Y)}{\lvert (\lambda, Y) \rvert}\right)^{\kappa-h} K_{-\frac{1}{2} + \kappa - h - j}\left(2 \pi \lvert (\lambda, Y) \rvert \right).
\end{align*}
Now use Lemma \ref{lem:SumOfKBesselFunctions}. For $q(\lambda) = 0$ plug in $s = 0$ and use Lemma \ref{lem:SumOfKBesselFunctions}.
\end{proof}

\begin{defn}
Define the holomorphic part $\Phi_{k,\beta}^+(Z)$ of the theta lift by
\begin{align*}
&\frac{\Gamma(\kappa) N_z^\kappa}{(-2 \pi i)^\kappa}\sum_{b \in \IZ / N_z \IZ} \delta_{\beta, \frac{bz}{N_z}}\zeta^b(\kappa) \\
&+ \sum_{\substack{\lambda \in K' \\ q(\lambda) = 0 \\ (\lambda, Y) > 0}} \sum_{b \in \IZ / N_z \IZ} \sum_{n \mid \lambda} n^{\kappa - 1} e\left(\frac{nb}{N_z}\right) \\
&\times (\delta_{\beta, \frac{\lambda}{n} - \frac{(\lambda, \zeta)}{nN_z} z + \frac{bz}{N_z}} + (-1)^\kappa \delta_{-\beta, \frac{\lambda}{n} - \frac{(\lambda, \zeta)}{nN_z} z + \frac{bz}{N_z}}) e\left(\lambda, Z - \frac{\zeta}{N_z}\right) \\
&+ \sum_{\substack{\lambda \in K' \\ q(\lambda) > 0 \\ (\lambda, Y) > 0}} \sum_{b \in \IZ / N_z \IZ} \sum_{n \mid \lambda} n^{\kappa - 1} e\left(\frac{nb}{N_z}\right) c_{k,\beta}\left(\frac{\lambda}{n} - \frac{(\lambda, \zeta)}{nN_z} z + \frac{bz}{N_z}, \frac{q(\lambda)}{n^2}\right) e\left(\lambda, Z - \frac{\zeta}{N_z}\right)
\end{align*}
and the non-holomorphic part $\Phi_{k,\beta}^-(Z) = \Phi_{k,\beta}(Z) - \Phi_{k,\beta}^+(Z)$.
\end{defn}

\begin{rem}
Assume that $E_{k, \frakv}(\tau) := E_{k, \frakv}(\tau, 0)$ is holomorphic. Then $\Phi_\frakv^-(Z)$ vanishes identically and hence $\Phi_\frakv(Z) = \Phi_\frakv^+(Z)$ is a holomorphic modular form. See also \cite[Theorem 14.3]{Borcherds}. Write $\M_\kappa^\Phi(\Gamma(L))$ for the space of holomorphic modular forms of weight $\kappa$ that are given as a theta lift $\Phi_\frakv(Z)$ for some $\frakv \in \Iso(\IC[L' / L])$.
\end{rem}

\begin{defn}
Define the holomorphic boundary part of the theta lift $\Phi_{k,\beta}^{\partial +}(Z)$ to be
\begin{align*}
&\frac{\Gamma(\kappa) N_z^\kappa}{(-2 \pi i)^\kappa} \sum_{b \in \IZ / N_z \IZ} \delta_{\beta, \frac{bz}{N_z}} \zeta^b(\kappa) \\
&+ \sum_{\substack{\lambda \in K' \\ q(\lambda) = 0 \\ (\lambda, Y) > 0}} \sum_{b \in \IZ / N_z \IZ} \sum_{n \mid \lambda} n^{\kappa - 1} e\left(\frac{nb}{N_z}\right) \\
&\times (\delta_{\beta, \frac{\lambda}{n} - \frac{(\lambda, \zeta)}{nN_z} z + \frac{bz}{N_z}} + (-1)^\kappa \delta_{-\beta, \frac{\lambda}{n} - \frac{(\lambda, \zeta)}{nN_z} z + \frac{bz}{N_z}}) e\left(\lambda, Z - \frac{\zeta}{N_z}\right).
\end{align*}
\end{defn}

\begin{prop}\label{prop:HolomorphicBoundaryPart}
We have
\begin{align*}
\Phi_{k,\beta}^{\partial +}(Z)
&= \frac{\Gamma(\kappa) N_z^\kappa}{(-2 \pi i)^\kappa} \sum_{b \in \IZ / N_z \IZ} \delta_{\beta, \frac{bz}{N_z}} \zeta^b(\kappa) \\
&+ \sum_{\substack{\lambda \in K \\ q(\lambda) = 0 \\ (\lambda, Y) > 0 \\ \lambda \text{ primitive}}} \sum_{\substack{b\in \IZ / N_z \IZ \\ c \in \IZ / N_\lambda \IZ}} \delta_{\beta, \frac{c \lambda}{N_\lambda} - \frac{c (\lambda, \zeta)}{N_\lambda N_z} z + \frac{bz}{N}} \sum_{m = 1}^\infty \tilde{\sigma}_{\kappa - 1}^{c, b}(m) e\left(\frac{m (\lambda, Z - \frac{\zeta_K}{N_z})}{N_\lambda}\right).
\end{align*}
In particular, for an isotropic plane $I \subseteq L \otimes \IQ$ with $I = \langle z, d \rangle$, we have, writing $\beta = \frac{c_\beta d}{N_d} - \frac{c_\beta (d, \zeta)z}{N_d N_z} + \frac{b_\beta z}{N_z}$, (if such a decomposition exists it is unique and if it does not exist then $\Phi_{k,\beta}^{\partial +}\vert_I$ vanishes identically)
\begin{align*}
\Phi_{k,\beta}^{\partial +} \vert_I (\tau)
&= \frac{\Gamma(\kappa) N_z^\kappa}{(-2 \pi i)^\kappa} \delta_{c_\beta} \zeta^{b_\beta}(\kappa) + \sum_{m = 1}^\infty \tilde{\sigma}_{\kappa - 1}^{c_\beta, b_\beta}(m) e\left(\frac{m (\tau - (d, \frac{\zeta_K}{N_z}))}{N_d}\right),
\end{align*}
which is the holomorphic part of an Eisenstein series on the boundary component $I$ (see \cite{DiamondShurman}, \cite{Miyake}). Here for $c \in \IZ / N_\lambda \IZ, b \in \IZ / N_z \IZ$ we have the divisor sum
$$\tilde{\sigma}_{\kappa - 1}^{c, b}(m) = \sum_{\substack{n \mid m \\ \frac{m}{n} \equiv c \bmod{N_\lambda}}} \sgn(n) n^{\kappa - 1} e\left(\frac{nb}{N_z}\right),$$
where the sum is over positive and negative divisors. Observe that the constant term only depends on the image of $\frac{z}{N_z}$ in $L' / L$.
\end{prop}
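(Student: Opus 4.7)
The plan is to regard the statement as an algebraic reorganization of the Fourier expansion defining $\Phi^{\partial+}_{k,\beta}$, followed by an application of the Siegel operator formula developed earlier in the paper; no analysis is required.

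First, I would reparametrize the isotropic Fourier sum. Every nonzero isotropic $\lambda \in K'$ with $(\lambda, Y) > 0$ admits a unique expression $\lambda = (m/N_\mu)\mu$ with $\mu \in K$ primitive isotropic satisfying $(\mu, Y) > 0$, and $m \in \IZ_{\geq 1}$. Since $\mu$ lies in $K$ and is therefore orthogonal to $z$ and $z'$, one has $(\lambda, \zeta) = (m/N_\mu)(\mu, \zeta_K)$, and the exponent collapses to $e(m(\mu, Z - \zeta_K/N_z)/N_\mu)$. The condition $n \mid \lambda$ with $n > 0$, $\lambda/n \in K'$ becomes $n \mid m$, and writing $k := m/n$ the delta argument equals
\[ Y(c, b) := \frac{c\mu}{N_\mu} - \frac{c(\mu, \zeta_K)}{N_\mu N_z}\, z + \frac{b z}{N_z} \in L'/L, \]
with $c = k \bmod N_\mu$; this class in $L'/L$ depends only on $c \bmod N_\mu$ and $b \bmod N_z$.

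The key identity is the symmetry $Y(-c, -b) \equiv -Y(c, b) \pmod L$. In the summand $(-1)^\kappa n^{\kappa-1} e(nb/N_z)\, \delta_{-\beta, Y(k, b)}$ with $n > 0$, the substitution $(n, b) \mapsto (-n, -b)$ fixes $m$ (so $k \mapsto -k$), preserves divisibility $n \mid m$, transforms $\delta_{-\beta, Y(k,b)}$ into $\delta_{\beta, Y(-k,-b)} = \delta_{\beta, -Y(k,b)\bmod L}$ reinterpreted on the new index, and multiplies the coefficient by $(-1)^{\kappa-1}$; the total prefactor becomes $(-1)^\kappa (-1)^{\kappa-1} = -1 = \sgn(n')$ for the new variable $n' < 0$. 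Combining this reindexed contribution with the untouched $\delta_\beta$ term (where $\sgn(n) = +1$) yields a single signed divisor sum
\[ \sum_{\substack{n \mid m \\ n \neq 0}} \sgn(n)\, n^{\kappa-1} e(nb/N_z)\, \delta_{\beta, Y(m/n, b)}. \]
Splitting the inner sum by $c \equiv m/n \pmod{N_\mu}$ produces exactly $\sum_c \delta_{\beta, Y(c, b)}\, \tilde\sigma^{c, b}_{\kappa-1}(m)$, and summing over $m \geq 1$ and over primitive $\mu$ gives the claimed formula for $\Phi^{\partial+}_{k,\beta}(Z)$.

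For the restriction to $I = \langle z, d \rangle$, I would invoke the Siegel operator formula from the earlier section: substituting $Z = \tau\tilde d + itd$ and letting $t \to \infty$ kills every Fourier term with $(\mu, d) > 0$; among primitive isotropic $\mu \in K$ with $(\mu, Y) > 0$, the vanishing $(\mu, d) = 0$ forces $\mu = d$, since in the Lorentzian lattice $K$ of signature $(1, l-1)$ two isotropic vectors with zero inner product are proportional, and primitivity together with the positivity $(\mu, Y), (d, Y) > 0$ fixes the sign. Only $\mu = d$ therefore survives; the exponent reduces to $e(m(\tau - (d, \zeta_K)/N_z)/N_d)$, and the delta forces the unique decomposition $\beta = (c_\beta/N_d)d - (c_\beta(d,\zeta_K)/(N_d N_z))z + (b_\beta/N_z)z$ (if none exists, $\Phi^{\partial+}_{k,\beta}\vert_I$ vanishes). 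The constant term survives only when $c_\beta = 0$, and the resulting expression is precisely the Fourier expansion of the holomorphic part of an Eisenstein series for the corresponding subgroup of $\SL_2(\IQ)$, as recorded in \cite{DiamondShurman, Miyake}. The main subtlety is the sign bookkeeping in the middle step, namely verifying $(-1)^\kappa(-1)^{\kappa-1} = \sgn(n')$ and that $(n, b) \mapsto (-n, -b)$ is a genuine bijection preserving $n \mid m$; the rest is formal reorganization.
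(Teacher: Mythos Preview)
Your proposal is correct and follows essentially the same route as the paper's proof: reparametrize the isotropic sum by primitive $\mu\in K$ and $m\geq 1$, absorb the $(-1)^\kappa\delta_{-\beta}$ term into negative divisors via the substitution $(n,b)\mapsto(-n,-b)$ to produce $\tilde\sigma^{c,b}_{\kappa-1}$, and then apply the Siegel operator. Your explicit Lorentzian argument (two orthogonal isotropic vectors in signature $(1,l-1)$ are proportional) is a clean way to justify that only $\mu=d$ survives the limit, which the paper handles by directly reading off the surviving Fourier term from the coordinate description of $F\vert_I$; otherwise the two arguments coincide.
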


\begin{proof}
For $\lambda \in K$ primitive let $N_\lambda$ be its level. Then we can rewrite the second summand as
\begin{align*}
&\sum_{\substack{\lambda \in K' \\ q(\lambda) = 0 \\ (\lambda, Y) > 0}} \sum_{b \in \IZ / N_z \IZ} \sum_{n \mid \lambda} n^{\kappa - 1} e\left(\frac{nb}{N_z}\right) \\
&\times (\delta_{\beta, \frac{\lambda}{n} - \frac{(\lambda, \zeta)}{nN_z} z + \frac{bz}{N_z}} + (-1)^\kappa \delta_{-\beta, \frac{\lambda}{n} - \frac{(\lambda, \zeta)}{nN_z} z + \frac{bz}{N_z}}) e\left(\lambda, Z - \frac{\zeta}{N_z}\right) \\
&= \sum_{\substack{\lambda \in K \\ q(\lambda) = 0 \\ (\lambda, Y) > 0 \\ \lambda \text{ primitive}}} \sum_{m = 1}^\infty \sum_{b \in \IZ / N_z \IZ} \sum_{n \mid m} n^{\kappa - 1} e\left(\frac{nb}{N_z}\right)  \\
&\times (\delta_{\beta, \frac{m \lambda}{n N_\lambda} - \frac{(m \lambda, \zeta)}{nN_\lambda N_z} z + \frac{bz}{N_z}} + (-1)^\kappa \delta_{-\beta, \frac{m \lambda}{n N_\lambda} - \frac{(m \lambda, \zeta)}{nN_\lambda N_z} z + \frac{bz}{N_z}}) e\left(\frac{m(\lambda, Z - \frac{\zeta_K}{N_z})}{N_\lambda}\right) \\
&= \sum_{\substack{\lambda \in K \\ q(\lambda) = 0 \\ (\lambda, Y) > 0 \\ \lambda \text{ primitive}}} \sum_{\substack{b\in \IZ / N_z \IZ \\ c \in \IZ / N_\lambda \IZ}} (\delta_{\beta, \frac{c \lambda}{N_\lambda} - \frac{c (\lambda, \zeta)}{N_\lambda N} z + \frac{bz}{N_z}} + (-1)^\kappa \delta_{-\beta, \frac{c \lambda}{N_\lambda} - \frac{c (\lambda, \zeta)}{N_\lambda N_z} z + \frac{bz}{N_z}}) \\
&\times \sum_{m = 1}^\infty \sum_{\substack{n \mid m \\ \frac{m}{n} \equiv c \bmod{N_\lambda}}} n^{\kappa - 1} e\left(\frac{nb}{N_z}\right) e\left(\frac{m(\lambda, Z - \frac{\zeta_K}{N_z})}{N_\lambda}\right) \\
\end{align*}
Summing over positive and negative divisors in the divisor sum we can rewrite the holomorphic boundary part as
\begin{align*}
\Phi_{k,\beta}^{\partial +}(Z)
&= \frac{\Gamma(\kappa) N_z^\kappa}{(-2 \pi i)^\kappa} \sum_{b \in \IZ / N_z \IZ} \delta_{\beta, \frac{bz}{N_z}} \zeta^b(\kappa) \\
&+ \sum_{\substack{\lambda \in K \\ q(\lambda) = 0 \\ (\lambda, Y) > 0 \\ \lambda \text{ primitive}}} \sum_{\substack{b\in \IZ / N_z \IZ \\ c \in \IZ / N_\lambda \IZ}} \delta_{\beta, \frac{c \lambda}{N_\lambda} - \frac{c (\lambda, \zeta)}{N_\lambda N_z} z + \frac{bz}{N_z}} \sum_{m = 1}^\infty \tilde{\sigma}_{\kappa - 1}^{c, b}(m) e\left(\frac{m (\lambda, Z - \frac{\zeta_K}{N_z})}{N_\lambda}\right).
\end{align*}
Let now $I \subseteq L \otimes \IQ$ be an isotropic plane with $I = \langle z, d \rangle, d \in \Iso_0(K)$ and consider the Siegel operator corresponding to this plane. Then
\begin{align*}
\Phi_{k,\beta}^{\partial +} \vert_I (\tau)
&= \frac{\Gamma(\kappa) N_z^\kappa}{(-2 \pi i)^\kappa} \sum_{b \in \IZ / N_z \IZ} \delta_{\beta, \frac{bz}{N_z}} \zeta^b(\kappa) \\
&+ \sum_{\substack{b\in \IZ / N_z \IZ \\ c \in \IZ / N_d \IZ}} \delta_{\beta, \frac{c d}{N_d} - \frac{c (d, \zeta)}{N_d N_z} z + \frac{bz}{N_z}} \sum_{m = 1}^\infty \tilde{\sigma}_{\kappa - 1}^{c, b}(m) e\left(\frac{m (\tau - (d, \frac{\zeta_K}{N_z}))}{N_d}\right).
\end{align*}
Writing $\beta = \frac{c_\beta d}{N_d} - \frac{c_\beta (d, \zeta)z}{N_d N_z} + \frac{b_\beta z}{N_z}$ (if such a decomposition exists it is unique and if it does not exist then $\Phi_{k,\beta}^{\partial +}\vert_I$ vanishes identically), we obtain
\begin{align*}
\Phi_{k,\beta}^{\partial +} \vert_I (\tau)
&= \frac{\Gamma(\kappa) N_z^\kappa}{(-2 \pi i)^\kappa} \delta_{c_\beta} \zeta^{b_\beta}(\kappa) + \sum_{m = 1}^\infty \tilde{\sigma}_{\kappa - 1}^{c_\beta, b_\beta}(m) e\left(\frac{m (\tau - (d, \frac{\zeta_K}{N_z}))}{N_d}\right),
\end{align*}
which shows the result.
\end{proof}

\begin{prop}
Assume that the map $\pi$ is surjective. Then the theta lift is injective.
\end{prop}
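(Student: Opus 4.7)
The plan is to combine the explicit formula of Theorem~\ref{thm:ThetaLiftIsEisensteinSeries} with the linear independence of the non-holomorphic Eisenstein series attached to distinct $0$-dimensional cusps, using surjectivity of $\pi$ to ensure every isotropic class in $L'/L$ is actually represented. Suppose $\Phi_\frakv(Z) = \Phi_\frakv(Z, 0) = 0$ for some $\frakv \in \Iso(\IC[L' / L])$. Theorem~\ref{thm:ThetaLiftIsEisensteinSeries} specialised at $s = 0$ reads
\[
\Phi_\frakv(Z, 0) = \frac{\Gamma(\kappa)}{(-2 \pi i)^{\kappa}} \sum_{\delta \in \Iso(L' / L)} N_\delta^{\kappa} \Bigl(\sum_{\beta \in \langle \delta \rangle} \frakv_\beta \, \zeta_+^{k_{\delta \beta}}(\kappa) \Bigr) \calG_{\kappa, \delta}(Z, 0).
\]
Under the surjectivity of $\pi$ every preimage $\pi^{-1}(\delta)$ is non-empty, so no $\calG_{\kappa, \delta}(Z, 0)$ vanishes identically, and the $\calG_{\kappa, \delta}(Z, 0)$ for different $\delta$ (up to the symmetry $\calG_{\kappa, -\delta} = (-1)^\kappa \calG_{\kappa, \delta}$) are linearly independent as orthogonal automorphic forms, being distinguished by their leading Fourier coefficients at the various $0$-dimensional cusps.

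Consequently the vanishing of $\Phi_\frakv(Z, 0)$ forces the system
\[
\sum_{\beta \in \langle \delta \rangle} \frakv_\beta \, \zeta_+^{k_{\delta \beta}}(\kappa) = 0 \qquad \text{for every } \delta \in \Iso(L' / L).
\]
The next step is to show that this linear system has only the trivial solution in the natural image of the theta lift, namely the $(-1)^\kappa$-eigenspace of the involution $\frakv \mapsto \frakv^*$. The system has a block-triangular structure indexed by the cyclic subgroups of $\Iso(L' / L)$: the equation for $\delta$ involves only $\frakv_\beta$ with $\beta \in \langle \delta \rangle$. Starting from $\delta = 0$ (which yields $\frakv_0 \, \zeta(\kappa) = 0$ and hence $\frakv_0 = 0$) and proceeding by increasing order of $\delta$, within each cyclic subgroup $C$ of order $N$ the equations indexed by the generators of $C$ form a square linear system whose matrix $(\zeta_+^{k_{\delta \beta}}(\kappa))_{\delta, \beta}$ is invertible by the linear independence of the partial zeta values $\zeta_+^b(\kappa)$ for $b \in (\IZ / N\IZ)^\times$. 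This triangulation forces $\frakv_\beta = 0$ for every generator, and an easy induction on the lattice of cyclic subgroups then yields $\frakv = 0$ on the $(-1)^\kappa$-eigenspace.

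The main obstacle will be the linear-independence step at the harmonic value $s = 0$: a priori the $\calG_{\kappa, \delta}(Z, s)$ are only known to be linearly independent at generic $s$, and specialisation at $s = 0$ could in principle produce degenerations. I would handle this by analysing the leading Fourier behaviour of each $\calG_{\kappa, \delta}(Z, s)$ at every $0$-dimensional cusp (via Lemma~\ref{lem:ThetaLiftFourierExpansions=0}) and checking that the cusp-by-cusp distinction of the different Eisenstein series persists at $s = 0$. As an alternative to the block-triangular linear-algebra argument, one can instead extract the components of $\frakv$ directly from the Fourier coefficients of $\Phi_\frakv(Z)$ at each $0$-dimensional cusp given by Lemma~\ref{lem:ThetaLiftFourierExpansions=0}, reading off one cusp class at a time and using surjectivity of $\pi$ to cover all of $\Iso(L' / L)$.
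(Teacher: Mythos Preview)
Your alternative at the end --- reading off the constant Fourier coefficients of $\Phi_\frakv$ at each $0$-dimensional cusp via Proposition~\ref{prop:HolomorphicBoundaryPart} --- is exactly what the paper does, and your linear-algebra reduction via Dirichlet characters and the non-vanishing of $L(\chi,\kappa)$ matches the paper's argument as well (the paper phrases the induction as a minimal-order counterexample rather than increasing order, but this is the same thing).

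Your \emph{primary} route, however, goes through the decomposition of Theorem~\ref{thm:ThetaLiftIsEisensteinSeries} and then invokes linear independence of the $\calG_{\kappa,\delta}(Z,0)$. You correctly flag this as the weak point: at the harmonic value $s=0$ (and especially for small $k$, where $E_{k,\frakv}(\tau,0)$ need not be holomorphic) nothing in the paper guarantees that the $\calG_{\kappa,\delta}(Z,0)$ remain linearly independent, and your proposed remedy --- checking their leading Fourier behaviour cusp by cusp --- amounts to redoing the paper's direct argument anyway. The paper sidesteps the issue entirely: it never asserts independence of the $\calG_{\kappa,\delta}$ at $s=0$, but instead uses only that the \emph{constant term} of the holomorphic boundary part $\Phi_\frakv^{\partial+}$ at the cusp $z$ equals $\frac{\Gamma(\kappa)N_z^\kappa}{(-2\pi i)^\kappa}\sum_{b}\frakv_{bz/N_z}\,\zeta^b(\kappa)$, a formula valid unconditionally. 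Surjectivity of $\pi$ then lets one choose a cusp $z$ with $z/N_z=\delta$ for any prescribed $\delta\in\Iso(L'/L)$, giving exactly the system you wrote down without ever touching the full Eisenstein series. So your plan is salvageable, but the clean path is your ``alternative'', and that is the paper's proof.
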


\begin{proof}
Assume that there is some $\frakv \in \Iso(\IC[L' / L])$ with $\Phi_\frakv(Z) = 0$. Then in particular, $\Phi_\frakv^{\partial +}(Z) = 0$ for every $0$-dimensional cusp $z$, i.e.
$$\sum_{b \in \IZ / N_z \IZ} \frakv_{\frac{bz}{N_z}} \zeta^b(\kappa) = 0$$
for all $0$-dimensional cusps $z$. We want to show that $E_{k, \frakv}(\tau, 0) = 0$, which is equivalent to $\frakv + (-1)^\kappa \frakv^* = 0$. Therefore, assume there is some $\delta \in \Iso(L' / L)$ with $\frakv_\delta \neq -(-1)^\kappa \frakv_{-\delta}$. By surjectivity of $\pi$ there is a $0$-dimensional cusp $z$ corresponding to $\delta$. Choose such $\delta$ with minimal order. Then by assumption the value in the $0$-dimensional cusp $z$ is
$$\sum_{b \in (\IZ / N_z \IZ)^\times} \frakv_{b \delta} \zeta^b(\kappa) = 0.$$
But of course this is also true for the $0$-dimensional cusps corresponding to $c \delta$ for $c \in (\IZ / N_z \IZ)^\times$, i.e. we have
$$\sum_{b \in (\IZ / N_z \IZ)^\times} \frakv_{b c \delta} \zeta^b(\kappa) = \sum_{b \in (\IZ / N_z \IZ)^\times} \frakv_{b \delta} \zeta^{bc^*}(\kappa) = 0.$$
Rewrite this using $\zeta^{bc^*}(\kappa) = \zeta_+^{bc^*}(\kappa) + (-1)^\kappa \zeta_+^{-bc^*}(\kappa)$ to obtain
$$\sum_{b \in (\IZ / N_z \IZ)^\times} (\frakv_{b \delta} + (-1)^\kappa \frakv_{-b \delta}) \zeta_+^{bc^*}(\kappa) = 0.$$
For a character $\chi : (\IZ / N_z \IZ)^\times \to \IC^\times$ consider now
\begin{align*}
0 &= \sum_{c \in (\IZ / N_z \IZ)^\times} \chi(c) \sum_{b \in (\IZ / N_z \IZ)^\times} (\frakv_{b \delta} + (-1)^\kappa \frakv_{-b \delta}) \zeta_+^{bc^*}(\kappa) \\
&= \sum_{b \in (\IZ / N_z \IZ)^\times} \chi(b) (\frakv_{b \delta} + (-1)^\kappa \frakv_{-b \delta}) \sum_{c \in (\IZ / N_z \IZ)^\times} \chi(c) \zeta_+^{c^*}(\kappa) \\
&= L(\overline{\chi}, \kappa) \sum_{b \in (\IZ / N_z \IZ)^\times} \chi(b) (\frakv_{b \delta} + (-1)^\kappa \frakv_{-b \delta}).
\end{align*}
Since $L(\overline{\chi}, \kappa) \neq 0$ we have
$$\sum_{b \in (\IZ / N_z \IZ)^\times} \chi(b) (\frakv_{b \delta} + (-1)^\kappa \frakv_{-b \delta}) = 0$$
for all Dirichlet characters $\chi$. But that means $\frakv_{b \delta} + (-1)^\kappa \frakv_{-b\delta} = 0$ for all $b \in (\IZ / N_z \IZ)^\times$ contradicting the assumption $\frakv_{\delta} \neq -(-1)^\kappa \frakv_{-\delta}$.
\end{proof}

Of course, if $\pi$ is not surjective, then the theta lift is not injective, i.e. the converse of the previous theorem is also true.

\begin{prop}
For $\kappa$ even and every $\delta \in L' / L$ isotropic there is a theta lift $\Phi_{k, \frakv}$ for some $\frakv \in \Iso(\IC[L' / L])$ such that the holomorphic part vanishes in all $0$-dimensional cusps except for the $0$-dimensional cusps corresponding to $\pm \delta$. For $\kappa$ odd this is true if $\delta \neq - \delta$. Moreover, in this case we have $\Phi_{k, \frakv} = \calG_{\kappa, \delta}$.
\end{prop}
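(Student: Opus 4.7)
The plan is to reduce the problem to a linear algebra statement on the subspace $W := \{\frakv \in \Iso(\IC[L'/L]) : \frakv = (-1)^\kappa \frakv^*\}$; throughout I tacitly assume that $\pi_L$ is surjective, which is exactly the hypothesis under which the preceding proposition applies. By Proposition \ref{prop:HolomorphicBoundaryPart}, the value of $\Phi_{k,\frakv}^+$ in the $0$-dimensional cusp corresponding to $z \in \Iso_0(L)$ depends only on $\gamma := \pi_L(z) \in \Iso(L'/L)$ and equals $\frac{\Gamma(\kappa) N_\gamma^\kappa}{(-2 \pi i)^\kappa} V_\gamma(\frakv)$, where
\begin{align*}
V_\gamma(\frakv) := \sum_{b \in \IZ / N_\gamma \IZ} \frakv_{b \gamma}\, \zeta^b(\kappa), \qquad \zeta^b(\kappa) = \zeta_+^b(\kappa) + (-1)^\kappa \zeta_+^{-b}(\kappa).
\end{align*}
Since $\Phi_{k,\frakv^*} = (-1)^\kappa \Phi_{k,\frakv}$ we may restrict to $\frakv \in W$, and the identity $\zeta^{-b}(\kappa) = (-1)^\kappa \zeta^b(\kappa)$ yields the symmetry $V_{-\gamma}(\frakv) = (-1)^\kappa V_\gamma(\frakv)$. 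Consequently the map $\Psi \colon \frakv \mapsto (V_\gamma(\frakv))_\gamma$ sends $W$ into $T := \{v \in \IC^{\Iso(L'/L)} : v_{-\gamma} = (-1)^\kappa v_\gamma\}$.

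The key step is to show that $\Psi : W \to T$ is an isomorphism. Injectivity is precisely the content of the preceding proposition: if every $V_\gamma(\frakv)$ vanishes then $\frakv + (-1)^\kappa \frakv^* = 0$, and on $W$ this becomes $2 \frakv = 0$. For surjectivity I let $S$ denote the number of fixed points of $\gamma \mapsto -\gamma$ on $\Iso(L'/L)$ and $F$ the number of free $\{\pm 1\}$-orbits, and verify directly that
\begin{align*}
\dim W \;=\; \dim T \;=\; \begin{cases} S + F & \text{if } \kappa \text{ is even}, \\ F & \text{if } \kappa \text{ is odd}, \end{cases}
\end{align*}
so an injective linear map between spaces of equal dimension is an isomorphism. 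For any $\delta \in \Iso(L'/L)$, and for $\delta \neq -\delta$ in the odd case, the characteristic vector of $\{\pm \delta\}$ belongs to $T$, and its preimage $\frakv \in W$ under $\Psi$ gives the desired vanishing of $\Phi_{k,\frakv}^+$ at all $0$-dimensional cusps except those corresponding to $\pm \delta$.

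For the \emph{moreover} assertion I would expand $\Phi_{k,\frakv}(Z)$ via Theorem \ref{thm:ThetaLiftIsEisensteinSeries} at $s = 0$ as $\frac{\Gamma(\kappa)}{(-2 \pi i)^\kappa}\sum_{\delta'} N_{\delta'}^\kappa a_{\delta'}\, \calG_{\kappa,\delta'}(Z)$, where $a_{\delta'} := \sum_{b} \frakv_{b \delta'} \zeta_+^b(\kappa)$. Grouping $\pm \delta'$ pairs using $\calG_{\kappa,-\delta'} = (-1)^\kappa \calG_{\kappa,\delta'}$, together with the identity $a_{\delta'} + (-1)^\kappa a_{-\delta'} = V_{\delta'}(\frakv)$, which follows from $\frakv \in W$ and the definition of $\zeta^b(\kappa)$, shows that the coefficient of each $\calG_{\kappa,\delta'}$ in this expansion is a nonzero scalar multiple of $V_{\delta'}(\frakv)$. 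Hence for the $\frakv$ constructed above only the $\calG_{\kappa,\delta}$-term survives, and after rescaling $\frakv$ in $W$ we obtain the equality $\Phi_{k,\frakv} = \calG_{\kappa,\delta}$. The principal obstacle is the bookkeeping around the involution $\gamma \mapsto -\gamma$ and its interplay with the parity of $\kappa$; once the dimension count and the identity $a_{\delta'} + (-1)^\kappa a_{-\delta'} = V_{\delta'}(\frakv)$ are pinned down the rest is formal.
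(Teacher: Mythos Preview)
Your approach is correct and takes a genuinely different route from the paper. The paper constructs the required $\frakv$ \emph{explicitly}: for $\delta$ generating a maximal cyclic isotropic subgroup it writes down the linear combination
\[
\frac{(-2\pi i)^\kappa}{2\,\varphi(N_\delta)\,\Gamma(\kappa)\,N_\delta^\kappa}\sum_{\chi}\frac{1}{L(\chi,\kappa)}\sum_{c\in(\IZ/N_\delta\IZ)^\times}\chi(c)\,\Phi_{c\delta},
\]
checks by hand that its value in each $0$-dimensional cusp is as required, identifies it with $\calG_{\kappa,\delta}$ via Theorem~\ref{thm:ThetaLiftIsEisensteinSeries}, and then proceeds by induction on the maximal length of chains of cyclic isotropic subgroups containing $\delta$. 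You instead package everything into the linear map $\Psi\colon W\to T$, cite the injectivity from the preceding proposition, and obtain surjectivity by the dimension count $\dim W=\dim T$. Your argument is shorter and avoids the induction entirely; the paper's argument, on the other hand, produces an explicit formula for $\frakv$ in terms of Dirichlet $L$-values, which is of independent interest.

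Two small points of bookkeeping are worth tightening. First, what you actually need for injectivity is the purely algebraic statement that $V_\gamma(\frakv)=0$ for all $\gamma\in\Iso(L'/L)$ forces $\frakv+(-1)^\kappa\frakv^*=0$; this is contained in the \emph{proof} of the preceding proposition (the $L$-function argument there), and does not itself require surjectivity of $\pi_L$, so your tacit hypothesis can in fact be dropped. Second, in the ``moreover'' step, when you group the $\pm\delta'$ terms the coefficient of $\calG_{\kappa,\delta'}$ comes out to $V_{\delta'}(\frakv)$ on free orbits but $\tfrac12 V_{\delta'}(\frakv)$ at fixed points $\delta'=-\delta'$; this does not affect your conclusion (the coefficient is still a nonzero multiple of $V_{\delta'}(\frakv)$), but your phrasing ``a nonzero scalar multiple'' should be read as allowing the scalar to depend on the orbit type.
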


\begin{proof}
If $\delta$ generates a maximal cyclic isotropic subgroup, then the holomorphic part of $\Phi_{c\delta}, c \in (\IZ / N_\delta \IZ)^\times$ vanishes in every $0$-dimensional cusp except for the $0$-dimensional cusps corresponding to the generators of $\langle \delta \rangle$. In the $0$-dimensional cusps corresponding to $b \delta$ for $b \in (\IZ / N_\delta \IZ)^\times$ the value is given by
$$\frac{\Gamma(\kappa) N_\delta^\kappa}{(-2 \pi i)^\kappa} \zeta^{c b^*}(\kappa).$$
Now consider the linear combination
$$\frac{(-2 \pi i)^\kappa}{2 \varphi(N_\beta) \Gamma(\kappa) N_\delta^\kappa}\sum_{\chi} \frac{1}{L(\chi, \kappa)} \sum_{c \in (\IZ / N_\delta \IZ)^\times} \chi(c) \Phi_{c \delta},$$
whose value in the $0$-dimensional cusp $b \delta$ for $b \in (\IZ / N_\delta \IZ)^\times$ is given by
\begin{align*}
&\frac{1}{2 \varphi(N_\beta)} \sum_{\chi} \frac{1}{L(\chi, \kappa)} \sum_{c \in (\IZ / N_\delta \IZ)^\times} \chi(c) \zeta^{cb^*}(\kappa) \\
&= \frac{1}{2 \varphi(N_\beta)} \sum_{\chi} \frac{\chi(b)}{L(\chi, \kappa)} \sum_{c \in (\IZ / N_\delta \IZ)^\times} \chi(c) \zeta^{c}(\kappa)
= \frac{1}{2 \varphi(N_\beta)} \sum_{\chi} \left(\chi(b) + (-1)^\kappa \chi(-b) \right),
\end{align*}
i.e. the value in the $0$-dimensional cusps vanishes except for the $0$-dimensional cusps corresponding to $\delta$, where the value is $1$. Moreover, using Theorem \ref{thm:ThetaLiftIsEisensteinSeries} shows that this linear combination is in fact given by $\calG_{\kappa, \delta}$. Now do induction over the maximal length of chains of cyclic isotropic subgroups containing $\delta$.
\end{proof}

This means in particular that for $F \in \M_\kappa^\pi(\Gamma(L))$ there is a theta lift $\Phi_\frakv$ for some $\frakv \in \Iso(\IC[L' / L])$ whose holomorphic boundary part is given by the boundary part of $F$ (observe that for $\kappa$ odd the values in $0$-dimensional cusps corresponding to $\delta \in \Iso(L' / L)$ with $\delta = - \delta$ must be zero).

\begin{thm}\label{thm:HolomorphicEisensteinSeriesk>2}
Let $k > 2$ and thus $\kappa > \frac{l}{2} + 1$. Then $\Phi_{k,\beta}(Z) = \Phi_{k,\beta}^+(Z)$ is a holomorphic modular form of weight $\kappa$ which is an Eisenstein series on the boundary. In particular we have $\M_\kappa^\pi(\Gamma(L)) = \S_\kappa(\Gamma(L)) + \M_\kappa^\Phi(\Gamma(L))$ in this case.
\end{thm}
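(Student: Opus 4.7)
The plan is to derive holomorphy from the Fourier expansion in Theorem \ref{thm:ThetaLiftFourierExpansion} and Lemma \ref{lem:ThetaLiftFourierExpansions=0}, using that the input Eisenstein series is already holomorphic when $k > 2$. First I would record what holomorphy of $E_{k,\beta}(\tau) := E_{k,\beta}(\tau,0)$ buys for $k > 2$: in the Fourier expansion of $E_{k,\frakv}$ recalled in Section 2 the $v^{1-k}$-term is absent, hence $c_{k,\beta}(\gamma,0,0) = 0$ for every $\gamma \in \Iso(L'/L)$, and vanishing of the negative-index Whittaker terms forces $c_{k,\beta}(\gamma,n,0) = 0$ for all $n < 0$.

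Next I would go case-by-case through Lemma \ref{lem:ThetaLiftFourierExpansions=0} and Theorem \ref{thm:ThetaLiftFourierExpansion} at $s = 0$ and check that only the holomorphic terms survive. For $\lambda = 0$ the non-holomorphic $q(Y)^{1-k}$-piece is proportional to $c_{k,\beta}(bz/N_z, 0) = 0$ (isotropic argument), leaving exactly the constant term appearing in $\Phi_{k,\beta}^+$. For $q(\lambda) = 0$ the argument $\frac{\lambda}{n} - \frac{(\lambda,\zeta)}{nN_z}z + \frac{bz}{N_z}$ is again isotropic (since $\lambda$ is isotropic in $K$ and $z$ is isotropic orthogonal to $\lambda$), so the Bessel tail in the Lemma vanishes, leaving only the holomorphic contribution when $(\lambda, Y) > 0$ and nothing when $(\lambda, Y) < 0$; using $e(-(\lambda,\zeta)/N_z)\,e(\lambda, Z) = e(\lambda, Z - \zeta/N_z)$ this matches the $q(\lambda)=0$ summand of $\Phi_{k,\beta}^+$. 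For $q(\lambda) > 0$ the Lemma produces exactly the desired holomorphic summand when $(\lambda, Y) > 0$ and zero when $(\lambda, Y) < 0$. Finally for $q(\lambda) < 0$ the integrand of Theorem \ref{thm:ThetaLiftFourierExpansion} contains $c_{k,\beta}(\cdot, q(\lambda)/n^2, 0)$ with negative second argument, so the coefficient vanishes.

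The leftover piece $\frac{i^\kappa}{2\sqrt{2}\lvert Y \rvert^{\kappa-1}}\Phi^K_{k,\beta}(Y/\lvert Y \rvert,0)$ coming from the sublattice $K$ of signature $(1, l-1)$ is handled by the same holomorphy argument applied to $K$, or, more cleanly, by invoking \cite[Theorem 14.3]{Borcherds}, which says that the regularised theta lift of a holomorphic modular form is holomorphic: this forces $\Phi_{k,\beta}^- = 0$ directly. Either way $\Phi_{k,\beta}(Z) = \Phi_{k,\beta}^+(Z)$ is a holomorphic modular form, and Proposition \ref{prop:HolomorphicBoundaryPart} then identifies each restriction $\Phi_{k,\beta}\vert_I$ with an Eisenstein series, so $\Phi_{k,\beta} \in \M_\kappa^{\partial \Eis}(\Gamma(L))$. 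The main obstacle is precisely the bookkeeping for the $\Phi^K$ contribution; appealing to Borcherds' general holomorphy result is the quickest way around it.

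For the surjectivity claim $\M_\kappa^\pi(\Gamma(L)) = \S_\kappa(\Gamma(L)) + \M_\kappa^\Phi(\Gamma(L))$, let $F \in \M_\kappa^\pi(\Gamma(L))$. The proposition preceding the theorem constructs, for each isotropic $\delta \in L'/L$, a theta lift whose holomorphic boundary part is concentrated at the $0$-dimensional cusps corresponding to $\pm \delta$. Taking a suitable linear combination I obtain $\frakv \in \Iso(\IC[L'/L])$ such that $\Phi_{k,\frakv}$ agrees with $F$ at every $0$-dimensional cusp (using $F \in \M_\kappa^\pi$ so that its cusp values factor through $L'/L$). Set $G := F - \Phi_{k,\frakv}$. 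Then $G$ is a holomorphic modular form, vanishes in every $0$-dimensional cusp, and its restriction to each $1$-dimensional boundary component $I$ is a difference of two Eisenstein series for the stabilising arithmetic subgroup of $\SL_2(\IQ)$. Such an Eisenstein series is determined by its constants at the $\SL_2$-cusps, which are precisely the values at the $0$-dimensional cusps adjacent to $I$; since these agree for $F$ and $\Phi_{k,\frakv}$, one has $G\vert_I = 0$ for every $I$, so $G \in \S_\kappa(\Gamma(L))$, proving the decomposition.
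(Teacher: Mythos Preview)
Your proof is correct and follows essentially the same approach as the paper: use that $c_{k,\beta}(\gamma,n,0)=0$ for $n\leq 0$ when $k>2$, feed this into Theorem~\ref{thm:ThetaLiftFourierExpansion} and Lemma~\ref{lem:ThetaLiftFourierExpansions=0} to kill the non-holomorphic terms, and invoke Borcherds for the residual $\Phi^K$ piece. The only minor difference is that the paper cites \cite[Theorem~10.3]{Borcherds} (the signature $(1,l-1)$ computation) specifically for the $\Phi^K_{k,\beta}(Y/\lvert Y\rvert)$ term rather than Theorem~14.3, and leaves the surjectivity statement $\M_\kappa^\pi=\S_\kappa+\M_\kappa^\Phi$ implicit in the remark preceding the theorem; your explicit cusp-matching argument for this part is exactly what that remark is pointing at.
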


\begin{proof}
Using that the coefficients $c_{k,\beta}(\gamma, n, 0)$ vanish for $n \leq 0$ one obtains the result using Theorem \ref{thm:ThetaLiftFourierExpansion} and Lemma \ref{lem:ThetaLiftFourierExpansions=0} together with \cite[Theorem 10.3]{Borcherds} for the term $\Phi_{k, \beta}^K(Y / \lvert Y \rvert)$. Of course, this reproduces the result of \cite[Theorem 14.3]{Borcherds}.
\end{proof}

We obtain the Fourier expansion of the Eisenstein series $\calE_{\kappa, \frakv}(Z)$ and that the Eisenstein series $\calE_{\kappa, \frakv}(Z) = \calE_{\kappa, \frakv}(Z, 0)$ for $\frakv \in \Iso(\IC[L' / L])$ are holomorphic if $\kappa > \frac{l}{2} + 1$. Moreover, if
$$\pi_L : \Gamma(L) \bs \Iso_0(L') \to \Iso(L' / L)$$
is injective, then we obtain all holomorphic orthogonal Eisenstein series as a lift of vector-valued Eisenstein series $E_{k, \frakv}(\tau)$.

If $k = 0$, the Eisenstein series $E_{k, \frakv}(\tau) = E_{k, \frakv}(\tau, 0)$ are usually not holomorphic in $\tau$. Hence we can not expect that $\calE_{\kappa, \frakv}(Z) = \calE_{\kappa,\frakv}(Z, 0)$ is holomorphic. Since $\res_{s = 1} E_{k, \frakv}(\tau, s)$ is always an invariant vector we have

\begin{thm}\label{thm:ResidueAts=1}
Let $k = 0, \kappa = \frac{l}{2} - 1 > 1$, i.e. $l > 4$. In the $0$-dimensional cusp $z$ we have the expansion
\begin{align*}
&\res_{s = 1} \Phi_{0,\beta}(Z, s) = \Phi(Z, \res_{s = 1} E_{0, \beta}(\cdot, s)) \\
&= \frac{\Gamma(\kappa) N_z^\kappa}{(-2 \pi i)^\kappa}\sum_{b \in \IZ / N_z \IZ} \res_{s = 1} c_{0,\beta}\left(\frac{bz}{N_z}, 0, s\right) \zeta^b(\kappa) \\
&+ \sum_{\substack{\lambda \in K' \\ q(\lambda) = 0 \\ (\lambda, Y) > 0}} e\left(- \frac{(\lambda, \zeta)}{N_z}\right) \sum_{b \in \IZ / N_z \IZ} \sum_{n \mid \lambda} n^{\kappa - 1} e\left(\frac{nb}{N_z}\right) \\
&\times \res_{s = 1} c_{0,\beta}\left(\frac{\lambda}{n} - \frac{(\lambda, \zeta)}{nN_z} z + \frac{bz}{N_z}, 0, s\right) e(\lambda, Z).
\end{align*}
In particular, every invariant vector yields a holomorphic modular form of singular weight which is an Eisenstein series on the boundary. If $\kappa = 1, l = 4$ we obtain the additional summand $\res_{s = 1} \Phi_{k,\beta}^K(Y / \lvert Y \rvert, s)$, which can be shown to be a constant.
\end{thm}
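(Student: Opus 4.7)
The plan is to combine the explicit Fourier expansion of $\Phi_{0,\beta}(Z,s)$ from Theorem \ref{thm:ThetaLiftFourierExpansion} with a term-by-term residue computation at $s=1$, then identify the outcome with the theta lift $\Phi(Z,\res_{s=1}E_{0,\beta}(\cdot,s))$ of the invariant vector obtained as the residue of the input Eisenstein series. First I would justify interchanging the residue with the Fourier expansion: each coefficient $b_{0,\beta}(\lambda,Y,s)$ and the sublattice piece $\Phi^K_{0,\beta}(Y/|Y|,s)$ is meromorphic in $s$ and, for fixed $Z\in\calS_t$, the expansion converges locally uniformly in a punctured neighbourhood of $s=1$, so the residue commutes with the sum. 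Then I would localise the source of singularities: since $\kappa>1$, the partial zeta values $\zeta_+^b(2s+\kappa)$ and $\zeta_+^b(2-2s+\kappa)$ are holomorphic at $s=1$, the Whittaker function $\calW_s(x)$ at $x\neq 0$ is holomorphic there, and by the discussion of $E_{0,\frakv}$ the only Fourier coefficient of $E_{0,\beta}(\tau,s)$ with a pole at $s=1$ is the constant term $c_{0,\beta}(\gamma,0,s)$, whose residues assemble into an invariant vector.

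The case analysis then runs as follows. For $\lambda=0$, only the ``dual'' summand in Theorem \ref{thm:ThetaLiftFourierExpansion} contributes, and evaluating the prefactors at $s=1$ gives $\sum_b \frac{\Gamma(\kappa)N_z^\kappa}{(-2\pi i)^\kappa}\res_{s=1}c_{0,\beta}(\tfrac{bz}{N_z},0,s)\,\zeta_+^b(\kappa)$. Because the residue defines an invariant vector, it is symmetric under $\tfrac{bz}{N_z}\mapsto -\tfrac{bz}{N_z}$ up to the sign $(-1)^\kappa$, and reindexing $b\mapsto -b$ combines $\zeta_+^b(\kappa)$ and $(-1)^\kappa\zeta_+^{-b}(\kappa)$ into the full $\zeta^b(\kappa)$ appearing in the statement. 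For $q(\lambda)=0$, $\lambda\neq 0$, the first summand is regular at $s=1$; in the second, the sum over $h,j$ of modified Bessel functions with index $\tfrac12-s+\kappa-h-j$ collapses at $s=1$ to $2^{\kappa-1}(\lambda,Y)^{-1/2}e^{-2\pi(\lambda,Y)}$ when $(\lambda,Y)>0$ and vanishes when $(\lambda,Y)<0$, by Lemma \ref{lem:SumOfKBesselFunctions}. Combining with the remaining prefactors and with the $e(\lambda,X)$ of the Fourier expansion assembles the displayed $e(\lambda,Z)$-term. For $q(\lambda)\neq 0$ all ingredients of $b_{0,\beta}(\lambda,Y,s)$ are holomorphic at $s=1$, so those coefficients contribute nothing.

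It remains to analyse the sublattice piece $\Phi^K_{0,\beta}(Y/|Y|,s)$. Since $K$ has signature $(1,l-1)$, the corresponding lift admits an analogous Fourier/Poincar\'e description, and the same mechanism identifies its possible pole at $s=1$ with the constant term of a lower-rank Eisenstein series. For $l>4$ this contribution is regular at $s=1$ and drops out, while for $l=4$ it produces exactly the extra constant summand mentioned at the end of the theorem. Finally, the three surviving pieces are recognised as the output of the holomorphic theta lift applied to the constant input $\res_{s=1}E_{0,\beta}(\cdot,s)\in\Inv(\IC[L'/L])$, viewed as a weight $0$ ``holomorphic modular form'' (cf.\ \cite[Theorem 14.3]{Borcherds} or Theorem \ref{thm:HolomorphicEisensteinSeriesk>2} restricted to the constant Fourier coefficient), giving the identification $\res_{s=1}\Phi_{0,\beta}(Z,s)=\Phi(Z,\res_{s=1}E_{0,\beta}(\cdot,s))$ and at the same time showing that the result is a holomorphic orthogonal modular form of singular weight which is an Eisenstein series on the boundary.

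The main technical obstacle is controlling the sublattice lift $\Phi^K_{0,\beta}(Y/|Y|,s)$: because $K$ has signature $(1,l-1)$ rather than $(2,l-2)$, it does not fit directly into the framework of the rest of the proof, and its analytic behaviour at $s=1$ must be extracted either by unwinding its definition and re-applying the Poincar\'e-series description of the Siegel theta function, or by a short induction on $l$. The same analysis pinpoints why the case $l=4$ (equivalently $\kappa=1$) is exceptional and produces the additional constant summand.
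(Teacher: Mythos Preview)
Your proposal is correct and follows essentially the same route as the paper's (very terse) proof: a term-by-term residue analysis of the Fourier expansion of Theorem~\ref{thm:ThetaLiftFourierExpansion}, using that the coefficients $c_{0,\beta}(\gamma,n,s)$ with $n\neq 0$ are holomorphic at $s=1$, that the Bessel sums collapse via Lemma~\ref{lem:SumOfKBesselFunctions} exactly as in the $s=0$ computation, and that the sublattice piece $\Phi^K_{0,\beta}$ is handled separately (the paper simply cites \cite[Theorem~10.3]{Borcherds} for this, which is the same mechanism you describe). Your $\zeta_+^b\to\zeta^b$ step via the invariance symmetry $\res c_{0,\beta}(\gamma,0,s)=(-1)^\kappa\res c_{0,\beta}(-\gamma,0,s)$ is precisely the analogue of the $(\delta_{\beta,\cdot}+(-1)^\kappa\delta_{-\beta,\cdot})$ symmetrization implicit in the $s=0$ case.
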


\begin{proof}
One observes that the terms with $q(\lambda) \neq 0$ are holomorphic in $s = 1$ and hence their residue vanishes (this follows from the corresponding result for vector-valued Eisenstein series). The calculation for the other Fourier coefficients is analogous to the case for $s = 0$. For the term $\res_{s = 1} \Phi_{k, \beta}^K(Y / \lvert Y \rvert, s)$ see \cite[Theorem 10.3]{Borcherds}. See also \cite[Theorem 14.3]{Borcherds}.
\end{proof}

The question if this yields all holomorphic modular forms of singular weight which are Eisenstein series on the boundary will be answered in the next section. We want to mention that we can also construct these in a different way. For an invariant vector $\frakv \in \IC[L' / L]$ we have
$$E_k(\tau, s) \frakv = \sum_{\substack{\beta \in L' / L \\ q(\beta) = 0}} \frakv_\beta E_{k, \beta}(\tau, s),$$
where $E_k(\tau, s)$ is the usual suitably normalized scalar-valued Eisenstein series for $\SL_2(\IZ)$. Now the left-hand-side is holomorphic in $s = 0$ and equal to a multiple of $\frakv$ and the lift of $E_k(\tau, s) \frakv$ is holomorphic in $s = 0$ and yields a holomorphic modular form for $s = 0$ as in the case $k > 2$.

\section{Lifting Holomorphic Orthogonal Modular Forms}\label{sec:LiftingOrthogonal}

Let $L$ be an even lattice of signature $(2, l), l \geq 3$. For $z \in \Iso_0(L)$ and $z' \in L'$ with $(z, z') = 1$ write $K = K_z = L \cap z^\perp \cap z'^\perp$. Let $b_1, \ldots, b_l$ be a basis of $K \otimes \IR$ with $b_1 \perp \langle b_2, \ldots, b_l \rangle$ and $q(b_1) > 0$. If $Z = z_1 b_1 + z_2 b_2 + z_3 b_3 + \ldots + z_l b_l \in K \otimes \IC$, we write $Z = (z_1, \ldots, z_l)$ and similarly $X = (x_1, \ldots, x_l), Y = (y_1, \ldots, y_l)$ if $Z = X + i Y$ with $X, Y \in K \otimes \IR$. Denote by $\IH_l = K \otimes \IR + i C$ the corresponding tube domain model, where
$$C = \{Y = (y_1, \ldots, y_l) \in K \otimes \IR \mid y_1 > 0, q(Y) > 0\}.$$
For $\lambda \in \Iso_0(L')$ let $\sigma_\lambda \in O^+(V)$ with $\sigma_\lambda \lambda = z$ and set $\lambda' = \sigma_\lambda^{-1} z'$. Define
$$K_\lambda = \lambda^\perp \cap \lambda'^\perp \cap L = L \cap \sigma_\lambda^{-1}(K \otimes \IR).$$
Then
$$\sigma_\lambda \Gamma(L)_\lambda \sigma_\lambda^{-1} = \Gamma(\sigma_\lambda L)_z \supseteq (\sigma_\lambda K_\lambda) \rtimes \Gamma(\sigma_\lambda K_\lambda),$$
where $\sigma_\lambda K_\lambda$ acts via translation and $\Gamma(\sigma_\lambda K_\lambda)$ via multiplication on $\IH_l = K_z \otimes \IR + iC$. A fundamental domain is given by $\calF = \calF_1 + i \calF_2$, where $\calF_1$ is a fundamental domain of the action $\sigma_\lambda K_\lambda$ on $K_z \otimes \IR$ and $\calF_2$ is a fundamental domain of the action $\Gamma(\sigma_\lambda K_\lambda)$ on $C$. By abuse of notation we will write $\sigma_\lambda K_\lambda \bs K_z \otimes \IR + i \Gamma(\sigma_\lambda K_\lambda) \bs C$. Moreover, recall the map
$$\pi_L : \Gamma(L) \bs \Iso_0(L') \to \Iso(L' / L).$$

Let $F : \IH_l \to \IC$ be a modular form of weight $\kappa$. We define its theta lift to be (if it exists)
$$\Phi^*(\tau, F) := \int_{\Gamma(L) \bs \IH_l} F(Z) \Theta_L(\tau, Z) q(Y)^\kappa \frac{\mathrm{d}X \mathrm{d}Y}{q(Y)^l}.$$
Proposition \ref{prop:ThetaSiegelDomainGrowth} and Lemma \ref{lem:SiegelDomainIntegrable} show that the theta lift exists for holomorphic modular forms of singular weight (in fact the lift exists for weight $\kappa < l - 1$ and for arbitrary weight if $F$ is a cusp form). A straight forward calculation yields

\begin{lem}\label{lem:ThetaLiftsAdjoint}
Let $F : \IH_l \to \IC$ be a holomorphic modular form of singular weight $\kappa = \frac{l}{2} - 1$ and $\frakv \in \Inv(\IC[L' / L])$ an invariant vector. Then
\begin{align*}
\langle \Phi^*(\tau, F), \frakv \rangle = \langle F, \Phi(Z, \frakv) \rangle,
\end{align*}
where the left hand side denotes the Petersson inner product on invariant vectors and the right hand side denotes the Petersson inner product on holomorphic modular forms of singular weight. In particular, the theta lifts are adjoint to eachother.
\end{lem}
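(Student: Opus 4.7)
The plan is to write out both sides as iterated integrals against the theta kernel $\Theta_L(\tau, Z)$, and then invoke Fubini to see that they agree. First I would unfold the Petersson pairing on the modular curve side,
\[
\langle \Phi^*(\tau, F), \frakv \rangle
= \int_{\SL_2(\IZ) \bs \IH} \langle \Phi^*(\tau, F), \frakv \rangle \, \frac{\mathrm{d}u \, \mathrm{d}v}{v^2},
\]
where the outer pairing is the Petersson inner product of weight-$0$ vector-valued forms (note $k = 1 - l/2 + \kappa = 0$ in singular weight) and the integrand pairing is the standard Hermitian product on $\IC[L' / L]$. Substituting the definition of $\Phi^*(\tau, F)$ and pulling the pairing, which is linear in its first slot, inside the $Z$-integral yields
\[
\int_{\SL_2(\IZ) \bs \IH} \int_{\Gamma(L) \bs \IH_l} F(Z) \, \langle \Theta_L(\tau, Z), \frakv \rangle \, q(Y)^\kappa \, \frac{\mathrm{d}X \, \mathrm{d}Y}{q(Y)^l} \, \frac{\mathrm{d}u \, \mathrm{d}v}{v^2}.
\]

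Next I would interchange the order of integration. Because $\Phi(Z, \frakv)$ is defined only as a regularised integral, a direct application of Fubini is not licit; instead I would insert the regularising factor $v^{-t}$ into the $\tau$-integrand. Proposition~\ref{prop:ThetaSiegelDomainGrowth} gives polynomial control on $\Theta_L(\tau, Z)$ jointly in $\tau$ and $Z$ on Siegel domains, and in singular weight $F$ is bounded at each cusp by its constant term, so for $\Re(t)$ sufficiently large the triple integrand is absolutely integrable. Fubini then swaps the order of integration, and the anti-linearity of $\langle \cdot, \cdot \rangle$ in the second slot gives
\[
\langle \Theta_L(\tau, Z), \frakv \rangle = \overline{\langle \frakv, \Theta_L(\tau, Z) \rangle},
\]
so that the inner $\tau$-integral equals $\overline{\Phi(Z, \frakv)}$ once the constant term at $t = 0$ is extracted (conjugation commutes with the constant-term-at-$t=0$ regularisation). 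Plugging this back in yields
\[
\int_{\Gamma(L) \bs \IH_l} F(Z) \, \overline{\Phi(Z, \frakv)} \, q(Y)^\kappa \, \frac{\mathrm{d}X \, \mathrm{d}Y}{q(Y)^l} = \langle F, \Phi(Z, \frakv) \rangle,
\]
which is the right-hand side of the claim.

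The main obstacle is justifying the interchange of the $\tau$- and $Z$-integrations in the presence of the regularisation: the Borcherds lift of an invariant vector in singular weight genuinely needs regularisation, because $\Theta_L$ only has polynomial, not exponential, decay as $v \to \infty$ (as can be read off from the expansion used in the proof of Proposition~\ref{prop:ThetaSiegelDomainGrowth}). Introducing the auxiliary $v^{-t}$, applying Fubini for $\Re(t) \gg 0$ where everything is absolutely convergent, and then extracting the constant term of the meromorphic continuation at $t = 0$ on both sides simultaneously is the standard device that makes this rigorous. Once it is in place, the remaining manipulation with the Hermitian pairing is purely formal, which is why the author calls this a \emph{straight forward} calculation.
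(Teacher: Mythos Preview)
Your proposal is correct and is precisely the ``straight forward calculation'' the paper leaves to the reader: unfold both Petersson pairings as integrals against the common kernel $\Theta_L(\tau,Z)$, use the sesquilinearity $\langle \Theta_L, \frakv\rangle = \overline{\langle \frakv, \Theta_L\rangle}$, and swap the order of integration. Your care with the regularising factor $v^{-t}$ to justify Fubini is appropriate and is exactly the standard device one would use here; the paper gives no further detail beyond calling the computation straightforward.
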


\begin{lem}\label{lem:AdjointThetaLiftHarmonic}
Let $l \geq 3, \kappa = \frac{l}{2} - 1$. If $F$ and $\Omega_\kappa F$ are square-integrable, then the theta lift exists and we have $\Phi^*(\tau, \Omega_\kappa F) = \Delta_0 \Phi^*(\tau, F)$. In particular, if $F$ is holomorphic, then $\Phi^*(\tau, F)$ exists and is harmonic.
\end{lem}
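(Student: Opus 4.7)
The plan is to treat $\Phi^*(\tau, F)$ as a Petersson pairing in the $Z$-variable against $\overline{\Theta_L(\tau, \cdot)}$, to transfer the weight-$\kappa$ Laplacian $\Omega_\kappa$ from $F$ onto the theta kernel by self-adjointness, to convert $\Omega_\kappa$ in $Z$ into $\Delta_0$ in $\tau$ via the identity recorded in Remark \ref{rem:Shaul}, and finally to pull $\Delta_0$ outside the $Z$-integral.

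First I would establish existence. Writing the invariant volume form as $\omega_g = (2^l q(Y)^l)^{-1}\,\mathrm{d}X\,\mathrm{d}Y$, the integrand of $\Phi^*(\tau, F)$ is the pointwise Petersson product of the weight-$\kappa$ sections $F$ and $\overline{\Theta_L(\tau, \cdot)}$. By Corollary \ref{cor:ThetaSquareIntegrable} both $\Theta_L(\tau, \cdot)$ and $\Omega_\kappa\overline{\Theta_L(\tau, \cdot)}$ are square-integrable on $\Gamma(L)\bs\IH_l$ for $l \geq 3$ and $\kappa > 0$, so the assumption that $F$ and $\Omega_\kappa F$ are $L^2$ combined with Cauchy--Schwarz immediately yields absolute convergence of both $\Phi^*(\tau, F)$ and $\Phi^*(\tau, \Omega_\kappa F)$.

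For the identity, since $\IH_l$ is a complete hermitian manifold and $\Gamma(L)$ acts properly discontinuously, Theorem \ref{thm:LaplaceSelfAdjoint} applies to sections of $\calL_\kappa$ on the quotient and gives
\[ (\Omega_\kappa F, \overline{\Theta_L(\tau, \cdot)})_Z = (F, \Omega_\kappa \overline{\Theta_L(\tau, \cdot)})_Z. \]
Because the weight of $\Theta_L$ in $\tau$ for singular weight $\kappa = l/2 - 1$ is $k = 0$, Remark \ref{rem:Shaul} gives $\Omega_\kappa\overline{\Theta_L(\tau, Z)} = \overline{\Delta_0\Theta_L(\tau, Z)}$. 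Unwinding the Petersson pairing and cancelling the double conjugate on the theta kernel turns the right-hand side into $2^{-l}\int F(Z)\,\Delta_0\Theta_L(\tau, Z)\,q(Y)^\kappa\omega_g$, and the bounds of Proposition \ref{prop:ThetaSiegelDomainGrowth}, uniform in $\tau$ over compact subsets of $\IH$, allow differentiation under the integral sign, yielding $\Phi^*(\tau, \Omega_\kappa F) = \Delta_0 \Phi^*(\tau, F)$. For the \emph{in particular} clause, if $F$ is holomorphic then $\overline{\partial} F = 0$, hence $\Omega_\kappa F = \overline{*}_{-\kappa}\overline{\partial}\,\overline{*}_\kappa\overline{\partial} F = 0$, which is trivially $L^2$; moreover the singular-weight Fourier expansion is supported on the boundary of the positive cone, so $F$ is bounded on every Siegel domain by the exponential decay from Lemma \ref{lem:SiegelDomainInequality}, giving $|F|^2 q(Y)^\kappa \ll q(Y)^{l/2 - 1}$, and $F \in L^2$ follows from Lemma \ref{lem:SiegelDomainIntegrable} with $p_1 = p_2 = 0, p = l/2 - 1$. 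Then the formula forces $\Delta_0 \Phi^*(\tau, F) = \Phi^*(\tau, 0) = 0$.

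The main obstacle is the analytic justification of the two interchanges of limits and integration: verifying that the Chernoff-type completeness hypothesis of Theorem \ref{thm:LaplaceSelfAdjoint} passes cleanly to the non-compact orbifold quotient $\Gamma(L) \bs \IH_l$, and controlling $\Delta_0 \Theta_L$ uniformly in $\tau$ on compact sets in order to move $\Delta_0$ past the $Z$-integral. Both ultimately rest on the Siegel-domain estimates of Proposition \ref{prop:ThetaSiegelDomainGrowth} and Corollary \ref{cor:ThetaSquareIntegrable}; once these are in hand, the remainder of the argument is formal.
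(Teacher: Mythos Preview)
Your approach is essentially the paper's: pair $F$ with $\overline{\Theta_L}$ in the Petersson inner product, use square-integrability of $\Theta_L$ and $\Omega_\kappa\overline{\Theta_L}$ from Corollary~\ref{cor:ThetaSquareIntegrable}, invoke Theorem~\ref{thm:LaplaceSelfAdjoint} to shift $\Omega_\kappa$ onto the kernel, and convert $\Omega_\kappa$ in $Z$ to $\Delta_0$ in $\tau$ via $\Omega_\kappa\overline{\Theta_L}=\overline{\Delta_0\Theta_L}$ (this identity is recorded in Section~\ref{sec:SiegelthetaFunction} citing \cite{ShaulGrossKohnen}, not in Remark~\ref{rem:Shaul}, which only gives the eigenvalue formula for $q(Y)^s$).

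The one point you flag as an obstacle but do not resolve---applying Theorem~\ref{thm:LaplaceSelfAdjoint} on the possibly singular quotient $\Gamma(L)\bs\IH_l$---is exactly what the paper handles with a one-line trick: choose a finite-index subgroup $\Gamma\subseteq\Gamma(L)$ that acts \emph{freely}, so that $\Gamma\bs\IH_l$ is a genuine complete connected hermitian manifold and $\calL_\kappa$ descends to an honest hermitian line bundle. Then Theorem~\ref{thm:LaplaceSelfAdjoint} applies directly, and the factor $[\Gamma(L):\Gamma]$ introduced by working on the finite cover cancels at the end. This removes the orbifold issue without any additional analysis. Once you insert this step, your argument and the paper's coincide.
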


\begin{proof}
Let $\Gamma \subseteq \Gamma(L)$ be a finite index subgroup which acts freely. Then $\Gamma \bs \IH_l$ is a complete connected hermitian manifold. By Corollary \ref{cor:ThetaSquareIntegrable}, $\Theta_L$ and $\Omega_\kappa \Theta_L$ are square-integrable. Using the assumption on $F$ and $\Omega_\kappa F$, we can apply Theorem \ref{thm:LaplaceSelfAdjoint} to square-integrable sections of the hermitian line bundle of modular forms of weight $\kappa$. This yields, using $\overline{\Omega_\kappa \overline{\Theta_L}} = \Delta_0 \Theta_L$ and Theorem \ref{thm:LaplaceSelfAdjoint},
\begin{align*}
\Delta_0 \Phi^*(\tau, F) &= \Delta_0 \int_{\Gamma(L) \bs \IH_l} F(Z) \Theta_L(\tau, Z) q(Y)^\kappa \frac{\mathrm{d}X \mathrm{d}Y}{q(Y)^l} \\
&= [\Gamma(L) : \Gamma] \Delta_0 \int_{\Gamma \bs \IH_l} F(Z) \Theta_L(\tau, Z) q(Y)^\kappa \frac{\mathrm{d}X \mathrm{d}Y}{q(Y)^l} \\
&= [\Gamma(L) : \Gamma] \int_{\Gamma \bs \IH_l} F(Z) \Delta_0 \Theta_L(\tau, Z) q(Y)^\kappa \frac{\mathrm{d}X \mathrm{d}Y}{q(Y)^l} \\
&= [\Gamma(L) : \Gamma] \int_{\Gamma \bs \IH_l} F(Z) \overline{\Omega_\kappa \overline{\Theta_L(\tau, Z)}} q(Y)^\kappa \frac{\mathrm{d}X \mathrm{d}Y}{q(Y)^l} \\
&= [\Gamma(L) : \Gamma] \int_{\Gamma \bs \IH_l} \Omega_\kappa F(Z) \Theta_L(\tau, Z) q(Y)^\kappa \frac{\mathrm{d}X \mathrm{d}Y}{q(Y)^l} \\
&= \int_{\Gamma(L) \bs \IH_l} \Omega_\kappa F(Z) \Theta_L(\tau, Z) q(Y)^\kappa \frac{\mathrm{d}X \mathrm{d}Y}{q(Y)^l} = \Phi^*(\tau, \Omega_\kappa F).
\end{align*}
The other assertions follow immediately.
\end{proof}

\begin{thm}\label{thm:AdjointThetaLiftInvariantVector}
Let $F$ be a holomorphic modular form of singular weight $\kappa = \frac{l}{2} - 1 > 0$. Then its theta lift
$$\Phi^*(\tau, F) = \int_{\Gamma(L) \bs \IH_l} F(Z) \Theta_L(\tau, Z) q(Y)^\kappa \frac{\mathrm{d}X \mathrm{d}Y}{q(Y)^l}$$
is an invariant vector given by
$$\frac{\Gamma(l/2)}{2(2 \pi)^{l / 2}} \sum_{\gamma \in \Iso(L' / L)} \sum_{\substack{\delta \in \Iso(L' / L) \\ \gamma = k_\delta \delta}} \zeta_+^{k_\delta}(l - \kappa) \sum_{\lambda \in \pi_L^{-1}(\delta)} a_{F, \lambda}(0) C(\lambda) \frake_\gamma,$$
where
\begin{align*}
C(\lambda)
&= \vol(\sigma_\lambda K_\lambda) [\Gamma(\sigma_\lambda L)_z : \sigma_\lambda K_\lambda \rtimes \Gamma(\sigma_\lambda K_\lambda)] C(\Gamma(\sigma_\lambda K_\lambda)) \\
&= \vol(K_\lambda) [\Gamma(L)_\lambda : K_\lambda \rtimes \Gamma(K_\lambda)] C(\Gamma(K_\lambda))
\end{align*}
for $\sigma_\lambda \in O^+(V)$ with $\sigma_\lambda \lambda = z$ and $C(\Gamma(K_\lambda))$ is a positive constant.
\end{thm}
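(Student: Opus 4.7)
The plan is first to argue that $\Phi^*(\tau, F)$ is forced to be an invariant vector, and then to compute it by Rankin-Selberg unfolding the $\IH_l$-integral against the lattice sum in $\Theta_L(\tau, Z)$, keeping only the contributions of primitive isotropic $\lambda \in L'$ and their positive integer multiples.

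Convergence on each Siegel domain follows from Proposition \ref{prop:ThetaSiegelDomainGrowth}, Lemma \ref{lem:SiegelDomainIntegrable} and the Siegel-domain-boundedness of $F$. Since $F$ is holomorphic of singular weight, $\Omega_\kappa F = 0$, so by Lemma \ref{lem:AdjointThetaLiftHarmonic} we have $\Delta_0 \Phi^*(\tau, F) = 0$, i.e.\ the lift is a harmonic weight-$0$ modular form for $\Mp_2(\IZ)$ with values in $\IC[L'/L]$. Together with the polynomial growth in $v$ from the same estimates, the standard Fourier analysis of harmonic vector-valued weight-$0$ $\Mp_2(\IZ)$-modular forms then forces $\Phi^*(\tau, F) \in \Inv(\IC[L'/L])$.

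To evaluate this constant vector, extract the constant $u$-Fourier coefficient of the integral. In the expansion
\begin{align*}
\Theta_L(\tau, Z) = \frac{v^{l/2}}{2(-2i)^\kappa} \sum_{\lambda \in L'} \frac{(\lambda, Z_L)^\kappa}{q(Y_L)^\kappa} e\bigl(\tau q(\lambda_{Z_L}) + \bar\tau q(\lambda_{Z_L^\perp})\bigr)\,\frake_\lambda,
\end{align*}
the $\lambda = 0$ term vanishes for $\kappa > 0$, and for non-isotropic $\lambda \neq 0$ the majorant satisfies $q_{Z_L}(\lambda) \geq \lvert q(\lambda)\rvert > 0$ uniformly, so the corresponding $Z$-integral decays exponentially in $v$ and, being $v$-independent by the constancy of $\Phi^*$, must vanish. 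The remaining isotropic $\lambda \in L' \setminus \{0\}$ are parametrised as $\lambda = n\lambda_0$ with primitive $\lambda_0 \in \pi_L^{-1}(\delta)$, $\delta \in \Iso(L'/L)$ and $n \in \IZ \setminus \{0\}$; grouping by $\Gamma(L)$-orbit, Rankin-Selberg unfolding against each orbit followed by the change of variables $Z \mapsto \sigma_{\lambda_0}^{-1} Z$ (which sends $\lambda_0$ to $z$) turns the summand for $nz$ into the $X$-independent expression
\begin{align*}
\frac{v^{l/2} n^\kappa}{2(-2i)^\kappa q(Y)^\kappa}\, e^{-\pi v n^2/q(Y)}\,\frake_{nz \bmod L},
\end{align*}
using $(z, Z_L) = 1$ and $z_{Z_L} = X_L/(2q(Y))$. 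The $X$-integration over $\sigma_{\lambda_0} K_{\lambda_0} \bs K_z \otimes \IR$ then extracts $a_{F, \lambda_0}(0)$ and produces $\vol(K_{\lambda_0})$ and the finite-index correction $[\Gamma(\sigma_{\lambda_0} L)_z : \sigma_{\lambda_0} K_{\lambda_0} \rtimes \Gamma(\sigma_{\lambda_0} K_{\lambda_0})]$, while the $Y$-integration $\int_{\Gamma(\sigma_{\lambda_0} K_{\lambda_0}) \bs C} e^{-\pi v n^2/q(Y)}\, q(Y)^{-l}\,dY$ becomes, via the substitution $s = q(Y)^{-1}$, a Gamma integral yielding $\Gamma(l/2)\, C(\Gamma(K_{\lambda_0}))/(2\pi^{l/2} v^{l/2} n^l)$ (so the $v^{l/2}$-factors cancel, confirming $\tau$-independence). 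The $n$-summation, constrained by $n\lambda_0 \equiv \gamma \bmod L$ (equivalent to $n \equiv k_\delta \bmod N_\delta$ when $\gamma = k_\delta \delta$), produces the partial zeta $\zeta_+^{k_\delta}(l - \kappa)$, with the $n < 0$ contributions absorbed via the symmetry $\lambda_0 \leftrightarrow -\lambda_0$. Reassembling the constants gives the stated formula.

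The hard part is the unfolding bookkeeping: the precise identification of the finite-index correction $[\Gamma(L)_{\lambda_0} : K_{\lambda_0} \rtimes \Gamma(K_{\lambda_0})]$ entering $C(\lambda_0)$, the verification that the singular-weight Fourier expansion of $F$ at the cusp $\lambda_0$ orthogonalises against the $X$-independent $nz$-summand so that only $a_{F, \lambda_0}(0)$ survives the $X$-integration, and the coupling of the $n$-summation with the Gamma integral to produce the gamma-zeta product $\Gamma(l/2)/(2(2\pi)^{l/2}) \cdot \zeta_+^{k_\delta}(l - \kappa)$ cleanly.
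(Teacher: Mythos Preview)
Your proposal is correct and follows essentially the same unfolding argument as the paper: harmonicity from Lemma \ref{lem:AdjointThetaLiftHarmonic}, restriction to isotropic $\lambda$, change of variables via $\sigma_\lambda$, $X$-integration picking out $a_{F,\lambda}(0)$, and a radial $Y$-integral producing the Gamma factor and the partial zeta from the $n$-sum. The only notable difference is the order of the logic: you first invoke ``harmonic of weight $0$ with polynomial growth $\Rightarrow$ invariant'' and then compute, whereas the paper computes the constant $u$-Fourier coefficient directly (where the non-isotropic $\lambda$ drop out by orthogonality of $e(uq(\lambda))$, making your exponential-decay-plus-constancy argument unnecessary), observes it is $v$-independent, and only then concludes boundedness plus harmonicity forces the lift to equal this constant.
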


\begin{proof}
First observe that the integral converges since holomorphic modular forms of singular weight are square integrable. We have
\begin{align*}
&\Phi^*(\tau, F) = \int_{\Gamma(L) \bs \IH_l} F(Z) \Theta_L(\tau, Z) q(Y)^\kappa \frac{\mathrm{d}X \mathrm{d}Y}{q(Y)^l} \\
&= \frac{v^{\frac{l}{2}}}{2} \int_{\Gamma(L) \bs \IH_l} F(Z) \sum_{\lambda \in L'} \frac{(\lambda, Z_L)^{\kappa}}{q(Y)^{\kappa}} \exp(-2 \pi v q_{Z_L}(\lambda)) \frake_\lambda(u q(\lambda)) q(Y)^\kappa \frac{\mathrm{d}X \mathrm{d}Y}{q(Y)^l} \\
&= \frac{v^{\frac{l}{2}}}{2} \sum_{\lambda \in L'} \int_{\Gamma(L) \bs \IH_l} F(Z) (\lambda, Z_L)^{\kappa} \exp(-2 \pi v q_{Z_L}(\lambda)) \frac{\mathrm{d}X \mathrm{d}Y}{q(Y)^l} \frake_\lambda(u q(\lambda))
\end{align*}
and remark that the integral is bounded for $v \to \infty$ and hence the lift grows polynomially. Moreover, by Lemma \ref{lem:AdjointThetaLiftHarmonic} it is harmonic of weight $0$ and thus its growth comes from the constant Fourier coefficient. The constant Fourier coefficients are then given by (the $\lambda = 0$ term vanishes since $\kappa > 0$)
\begin{align*}
&\frac{v^{\frac{l}{2}}}{2} \sum_{\substack{\lambda \in \Gamma(L) \bs L' \\ q(\lambda) = 0}} \int_{\Gamma(L)_\lambda \bs \IH_l} F(Z) (\lambda, Z_L)^{\kappa} \exp(-2 \pi v q_{Z_L}(\lambda))\frac{\mathrm{d}X \mathrm{d}Y}{q(Y)^l} \frake_\lambda \\
&= \frac{v^{\frac{l}{2}}}{2} \sum_{\lambda \in \Gamma(L) \bs \Iso_0(L')} \sum_{m = 1}^\infty m^\kappa \int_{\Gamma(L)_\lambda \bs \IH_l} F(Z) (\lambda, Z_L)^{\kappa} \exp(-2 \pi v m^2 q_{Z_L}(\lambda)) \frac{\mathrm{d}X \mathrm{d}Y}{q(Y)^l} \frake_{m\lambda}.
\end{align*}
As above let $\sigma_{\lambda} \in O^+(V)$ such that $\sigma_{\lambda} \lambda = z$ and write $\lambda' = \sigma_{\lambda}^{-1}(z')$. Then we can rewrite the integral to (observe that $(z, Z_L) = 1$ and $q_{Z_L}(z) = 1 / q(Y)$)
\begin{align*}
\int_{\Gamma(\sigma_\lambda L)_z \bs \IH_l} (F \mid_\kappa \sigma_\lambda)(Z) \exp(-2 \pi v m^2 / q(Y)) \frac{\mathrm{d}X \mathrm{d}Y}{q(Y)^l}.
\end{align*}
and hence, using the Fourier expansion of $F \mid_k \sigma_\lambda$
\begin{align*}
&\sum_{\delta \in \sigma_\lambda K_\lambda'} a_{F, \lambda}(\delta) \int_{\Gamma(\sigma_\lambda L)_z \bs \IH_l} e(\delta, Z) \exp(-2 \pi v m^2 / q(Y)) \frac{\mathrm{d}X \mathrm{d}Y}{q(Y)^l} \\
&= [\Gamma(L)_\lambda : K_\lambda \rtimes \Gamma(K_\lambda)] \sum_{\delta \in \sigma_\lambda K_\lambda'} a_{F, \lambda}(\delta) \\
&\times \int_{\sigma_\lambda K_\lambda \rtimes \Gamma(\sigma_\lambda K_\lambda) \bs \IH_l} e(\delta, Z) \exp(-2 \pi v m^2 / q(Y)) \frac{\mathrm{d}X \mathrm{d}Y}{q(Y)^l} \\
&= [\Gamma(L)_\lambda : K_\lambda \rtimes \Gamma(K_\lambda)] \sum_{\delta \in \sigma_\lambda K_\lambda'} a_{F, \lambda}(\delta) \\
&\times \int_{\sigma_\lambda K_\lambda \bs K_z \otimes \IR} e(\delta, X) \mathrm{d}X \int_{\Gamma(\sigma_\lambda K_\lambda) \bs C} e(\delta, iY)\exp(-2 \pi v m^2 / q(Y)) \frac{ \mathrm{d}Y}{q(Y)^l} \\
&= a_{F, \lambda}(0) \vol(K_\lambda) [\Gamma(L)_\lambda : K_\lambda \rtimes \Gamma(K_\lambda)] \int_{\Gamma(\sigma_\lambda K_\lambda) \bs C} \exp(-2 \pi v m^2 / q(Y)) \frac{\mathrm{d}X \mathrm{d}Y}{q(Y)^l}.
\end{align*}
Consider the diffeomorphism
\begin{align*}
\varphi : [0, \infty) \times \IR^{l - 1} \to C, \quad (r, y_2, \ldots, y_l) \mapsto \sqrt{r} (\sqrt{1 - q(0, y_2, \ldots, y_l)}, y_2, \ldots, y_l)
\end{align*}
with inverse
\begin{align*}
\varphi^{-1} : C \to [0, \infty) \times \IR^{l - 1}, \quad Y \mapsto (q(Y), y_2 / \sqrt{q(Y)}, \ldots, y_l / \sqrt{q(Y)}).
\end{align*}
Then we have for integrable $f : C \to \IC$
\begin{align*}
\int_{C} f(Y) \mathrm{d}Y &= \int_0^\infty \int_{\IR^{l-1}} f(\varphi(r, y_2, \ldots, y_l)) \lvert \det(\varphi'(r, y_1, \ldots, y_l)) \rvert \mathrm{d}y_2 \ldots \mathrm{d}y_l \mathrm{d}r \\
&= \int_{\IR^{l-1}} \int_0^\infty r^{\frac{l}{2}} f(\varphi(r, y_2, \ldots, y_l)) \frac{\mathrm{d}r}{r} \lvert \det(\varphi'(1, y_1, \ldots, y_l)) \rvert \mathrm{d}y_2 \ldots \mathrm{d}y_l.
\end{align*}
Here $f(Y) = g(q(Y))$ for $g(r) = \exp(2 \pi v n^2 / r) r^{-l}$ and thus
$$f(\varphi(r, y_2, \ldots, y_l)) = g(r) = \exp(2 \pi v n^2 / r) r^{-l}.$$
Hence we obtain
\begin{align*}
& a_{F, \lambda}(0) \vol(K_\lambda) [\Gamma(L)_\lambda : K_\lambda \rtimes \Gamma(K_\lambda)]\\
&\times \int_0^\infty \exp(-2 \pi v m^2 / r) r^{-\frac{l}{2}} \frac{\mathrm{d}r}{r} \int_{\Gamma(\sigma_\lambda K_\lambda) \bs \IR^{l-1}} \lvert \det(\varphi'(1, y_2, \ldots, y_l)) \rvert \mathrm{d}y_2, \ldots, y_l \\
&= a_{F, \lambda}(0) C(\lambda) (2 \pi v)^{- \frac{l}{2}} m^{-l} \Gamma(l/2).
\end{align*}
Hence the constant Fourier coefficient is given by
\begin{align*}
&\frac{\Gamma(l/2)}{2(2 \pi)^{l / 2}} \sum_{\lambda \in \Gamma(L) \bs \Iso_0(L')} a_{F, \lambda}(0) C(\lambda) \sum_{m = 1}^\infty m^{\kappa - l} \frake_{m \lambda} \\
&= \frac{\Gamma(l/2)}{2(2 \pi)^{l / 2}} \sum_{\gamma \in \Iso(L' / L)} \sum_{\substack{\delta \in \Iso(L' / L) \\ \gamma = k_\delta \delta}} \zeta_+^{k_\delta}(l - \kappa) \sum_{\lambda \in \pi_L^{-1}(\delta)} a_{F, \lambda}(0) C(\lambda) \frake_\gamma.
\end{align*}
In particular, this is independent of $v$. Hence $\Phi^*(\tau, F)$ is bounded and since it is harmonic of weight $0$ it is an invariant vector.
\end{proof}

\begin{rem}
Observe that $\Phi^*(\tau, F)$ only depends on the values in the $0$-dimensional cusps. In particular it vanishes on functions that are zero in every $0$-dimensional cusp.
\end{rem}

\begin{rem}
Let $F : \IH_l \to \IC$ be a cusp form of arbitrary weight. Then $\Phi^*(\tau, F)$ exists and one can show as above that it is harmonic. Moreover, the same calculation as in the previous proof shows that the constant Fourier coefficient vanishes. Hence, $\Phi^*(\tau, F)$ decays exponentially and hence is square-integrable and harmonic of weight $k$. Thus, $\Phi^*(\tau, F)$ is a holomorphic cusp form. This reproduces the result of \cite{Oda}.
\end{rem}

\begin{cor}\label{cor:AdjointThetaLiftKernel}
For $\delta \in L' / L$ isotropic let $a_{F, \delta}(0) := \sum_{\lambda \in \pi_L^{-1}(\delta)} a_{F, \lambda}(0) C(\lambda)$. Then $\Phi^*(\tau, F)$ vanishes if and only if $a_{F, \delta}(0)$ vanishes for all isotropic $\delta \in L' / L$. In particular, if $\pi_L$ is injective, then the theta lift $\Phi^*(\tau, F)$ vanishes if and only if $F$ vanishes in every $0$-dimensional cusp.
\end{cor}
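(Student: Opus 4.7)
The plan is to read off the claim directly from the explicit formula for $\Phi^*(\tau, F)$ in Theorem \ref{thm:AdjointThetaLiftInvariantVector} and reduce the converse to a character-theoretic manipulation entirely parallel to the injectivity argument for $\Phi$ given earlier in the paper. The direction $a_{F,\delta}(0) \equiv 0 \implies \Phi^*(\tau, F) = 0$ is immediate from the formula, so all the work lies in the converse.

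I would assume $\Phi^*(\tau, F) = 0$ and, for contradiction, pick an isotropic $\delta_0 \in L'/L$ with $a_{F, \delta_0}(0) \neq 0$ of \emph{maximal} order $N_0$. For each $j \in (\IZ/N_0\IZ)^\times$, vanishing of the coefficient of $\frake_{j\delta_0}$ in the theorem's formula reads
$$\sum_{\substack{\delta \in \Iso(L'/L) \\ j\delta_0 \in \langle \delta \rangle}} \zeta_+^{k_\delta}(l-\kappa)\, a_{F,\delta}(0) = 0, \qquad j\delta_0 = k_\delta \delta.$$
Any $\delta$ contributing nontrivially has $a_{F,\delta}(0) \neq 0$, hence order $\leq N_0$ by maximality; but $\langle \delta \rangle$ must contain the element $j\delta_0$ of order $N_0$, hence order $\geq N_0$. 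So the order equals $N_0$ and $\langle \delta \rangle = \langle \delta_0 \rangle$, forcing $\delta = c\delta_0$ for some $c \in (\IZ/N_0\IZ)^\times$ with $k_\delta = jc^{-1}$. The system simplifies to
$$\sum_{c \in (\IZ/N_0\IZ)^\times} \zeta_+^{jc^{-1}}(l-\kappa)\, a_{F, c\delta_0}(0) = 0 \quad \text{for all } j \in (\IZ/N_0\IZ)^\times.$$

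Now I would run the character-sum argument verbatim from the earlier injectivity proposition: for each Dirichlet character $\chi$ modulo $N_0$, multiplying by $\chi(j)$, summing over $j$, and substituting $m = jc^{-1}$ produces
$$L(\chi, l-\kappa) \sum_{c \in (\IZ/N_0\IZ)^\times} \chi(c)\, a_{F,c\delta_0}(0) = 0.$$
Since $l-\kappa = l/2 + 1 > 1$, the Dirichlet $L$-values $L(\chi, l-\kappa)$ are all nonzero (as absolutely convergent Euler products), and orthogonality of characters yields $a_{F,c\delta_0}(0) = 0$ for every $c$, contradicting the choice of $\delta_0$. The only non-routine step is the maximality reduction to $\delta \in \langle \delta_0 \rangle$; the rest is formal. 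The "in particular" claim then follows at once: if $\pi_L$ is injective, each fibre $\pi_L^{-1}(\delta)$ is either empty or a single cusp $\lambda_\delta$ with positive constant $C(\lambda_\delta)$, so $a_{F,\delta}(0) = 0$ if and only if $F$ vanishes in that cusp, and combining this with the equivalence above gives the stated characterization.
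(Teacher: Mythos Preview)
Your proof is correct and is essentially identical to the paper's own argument: both select an isotropic element of maximal order with nonvanishing $a_{F,\cdot}(0)$, reduce the $\frake_\gamma$-coefficient to a sum over units in $(\IZ/N\IZ)^\times$ via that maximality, and then kill the sum by pairing against all Dirichlet characters and invoking $L(\chi, l-\kappa) \neq 0$. You spell out the maximality reduction and the ``in particular'' clause a bit more explicitly than the paper does, but there is no substantive difference.
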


\begin{proof}
If $\Phi^*(\tau, F)$ vanishes, then the coefficient of $\frake_\gamma$, given by
\begin{align*}
\sum_{\substack{\delta \in L' / L \\ q(\delta) = 0 \\ \gamma = k_\delta \delta}} \zeta_+^{k_\delta}(l - \kappa) a_{F, \delta}(0),
\end{align*}
vanishes for all isotropic $\gamma \in L' / L$. In particular, if $\gamma \in L' / L$ is isotropic with maximal order such that $a_{F, \gamma}(0) \neq 0$, then for $k' \in (\IZ / N_\gamma \IZ)^\times$ the coefficient of $\frake_{k' \gamma}$
\begin{align*}
\sum_{k \in (\IZ / N_\gamma \IZ)^\times} \zeta_+^{k^*}(l - \kappa) a_{F, k k'\gamma}(0) = \sum_{k \in (\IZ / N_\gamma \IZ)^\times} \zeta_+^{k'k^*}(l - \kappa) a_{F, k \gamma}(0)
\end{align*}
vanishes. Thus, for all Dirichlet characters $\chi : (\IZ / N_\gamma \IZ)^\times \to \IC$, the sum
\begin{align*}
&\sum_{k' \in (\IZ / N_\gamma \IZ)^\times} \chi(k') \sum_{k \in (\IZ / N_\gamma \IZ)^\times} \zeta_+^{k'k^*}(l - \kappa) a_{F, k \gamma}(0) \\
&= \sum_{k' \in (\IZ / N_\gamma \IZ)^\times} \chi(k')\zeta_+^{k'}(l - \kappa) \sum_{k \in (\IZ / N_\gamma \IZ)^\times} \chi(k) a_{F, k \gamma}(0) \\
&= L(l - \kappa, \chi) \sum_{k \in (\IZ / N_\gamma \IZ)^\times} \chi(k) a_{F, k \gamma}(0)
\end{align*}
vanishes. Now $L(l - \kappa, \chi) \neq 0$ and hence
\begin{align*}
\sum_{k \in (\IZ / N_\gamma \IZ)^\times} \chi(k) a_{F, k \gamma}(0) = 0
\end{align*}
for all Dirichlet characters $\chi$. But then we must have $a_{F, k\gamma}(0) = 0$ for all $k$, since Dirichlet characters form an orthogonal basis of $(\IZ / N_\gamma \IZ)^\times$.
\end{proof}

\begin{cor}\label{cor:ThetaLiftImage}
The theta lift $\Phi$ surjects onto the space of holomorphic modular forms of singular weight which are Eisenstein series on the boundary and whose value in a $0$-dimensional cusp only depends on its image in $L' / L$, i.e. we have $\M_\kappa^\Phi(\Gamma(L)) = \M_\kappa^\pi(\Gamma(L))$.
\end{cor}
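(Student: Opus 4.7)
The inclusion $\M_\kappa^\Phi(\Gamma(L)) \subseteq \M_\kappa^\pi(\Gamma(L))$ is essentially handed to us by the preceding sections. I would dispose of it first by reading off the constant Fourier coefficient in a $0$-dimensional cusp $z$ of the lift $\Phi(Z,\frakv)$ of an invariant vector $\frakv \in \Inv(\IC[L'/L])$: from Theorem~\ref{thm:ResidueAts=1} it equals
$$\frac{\Gamma(\kappa) N_z^\kappa}{(-2 \pi i)^\kappa}\sum_{b \in \IZ/N_z\IZ} \frakv_{bz/N_z}\,\zeta^b(\kappa),$$
which depends only on the image of $z/N_z$ in $L'/L$ (both through $N_z$ and through the components $\frakv_{bz/N_z}$); hence $\Phi(Z,\frakv) \in \M_\kappa^\pi(\Gamma(L))$.

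For the reverse inclusion I would run an orthogonality argument inside the finite-dimensional space $\M_\kappa^\pi(\Gamma(L))$, equipped with the Petersson inner product (available since singular-weight holomorphic forms are square integrable, as already used to define $\Phi^*$ in Theorem~\ref{thm:AdjointThetaLiftInvariantVector}). Suppose $F \in \M_\kappa^\pi(\Gamma(L))$ is orthogonal to every $\Phi(Z,\frakv)$ with $\frakv \in \Inv(\IC[L'/L])$. By the adjointness relation of Lemma~\ref{lem:ThetaLiftsAdjoint},
$$\langle \Phi^*(\tau,F), \frakv \rangle = \langle F, \Phi(Z,\frakv) \rangle = 0$$
for every invariant $\frakv$. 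Since $\Phi^*(\tau,F)$ is itself an invariant vector by Theorem~\ref{thm:AdjointThetaLiftInvariantVector}, taking $\frakv = \Phi^*(\tau,F)$ forces $\Phi^*(\tau,F) = 0$.

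Now Corollary~\ref{cor:AdjointThetaLiftKernel} translates this vanishing into $\sum_{\lambda \in \pi_L^{-1}(\delta)} a_{F,\lambda}(0) C(\lambda) = 0$ for every isotropic $\delta \in L'/L$. The hypothesis $F \in \M_\kappa^\pi$ enters decisively here: the numbers $a_{F,\lambda}(0)$ are constant on each fibre $\pi_L^{-1}(\delta)$, and since all $C(\lambda) > 0$ this common value must vanish. Hence $F$ vanishes in every $0$-dimensional cusp; being Eisenstein on the boundary, its restrictions to all $1$-dimensional boundary components are then also zero, so $F \in \S_\kappa(\Gamma(L))$. But the theory of singular weights rules out cusp forms of weight $\kappa = l/2 - 1$, so $F = 0$, and the orthogonal complement of $\M_\kappa^\Phi$ inside $\M_\kappa^\pi$ is trivial.

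The only substantive obstacle I anticipate is ensuring the Petersson pairing is legitimate in singular weight, so that ``orthogonal complement'' and the use of adjointness genuinely make sense; this rests on the square-integrability of holomorphic singular-weight forms, which is implicit throughout the preceding section and follows from the restrictive Fourier structure dictated by singular weight together with Lemma~\ref{lem:SiegelDomainIntegrable}. Everything else is a bookkeeping exercise, since the heavy analytic work is already done in Theorem~\ref{thm:AdjointThetaLiftInvariantVector} and Corollary~\ref{cor:AdjointThetaLiftKernel}.
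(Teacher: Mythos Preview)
Your proposal is correct and follows essentially the same route as the paper. The paper phrases the argument as ``decompose $F = \Phi_{0,\frakv} + G$ with $\Phi^*(G) = 0$ and show $G = 0$'', while you phrase it as ``the orthogonal complement of $\M_\kappa^\Phi$ in $\M_\kappa^\pi$ is trivial''; but since $(\operatorname{Im}\Phi)^\perp = \ker\Phi^*$ these are the same statement, and the core step---using Corollary~\ref{cor:AdjointThetaLiftKernel} together with the constancy of $a_{F,\lambda}(0)$ on fibres of $\pi_L$ and the positivity of $C(\lambda)$ to force all constant terms to vanish, then invoking the absence of singular-weight cusp forms---is identical in both.
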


\begin{proof}
Let $F : \IH_l \to \IC$ be a holomorphic modular form which is an Eisenstein series on the boundary whose value in a $0$-dimensional cusp only depends on its image in $L' / L$. Since $\Phi$ and $\Phi^*$ are adjoint to each other by Lemma \ref{lem:ThetaLiftsAdjoint}, we can write $F = \Phi_{0, \frakv}(Z) + G$ for an invariant vector $\frakv$ and a holomorphic modular form $G : \IH_l \to \IC$ of singular weight with $\Phi^*(\tau, G) = 0$. By Corollary \ref{cor:AdjointThetaLiftKernel} we have $a_{G, \delta}(0) = 0$ for all isotropic $\delta \in L' / L$. Moreover, the value of $G = F - \Phi_{0, \frakv}(Z)$ in a $0$-dimensional cusp only depends on its image in $L' / L$. Hence we have
$$0 = a_{G, \delta}(0) = \sum_{\lambda \in \pi_L^{-1}(\delta)} a_{G, \lambda}(0) C(\lambda) = a_{G, \tilde{\lambda}}(0) \sum_{\lambda \in \pi_L^{-1}(\delta)} C(\lambda)$$
for some $\tilde{\lambda} \in \pi_L^{-1}(\delta)$. But then $a_{G, \tilde{\lambda}}(0) = 0$ and hence $G$ vanishes in every $0$-dimensional cusp and thus is a cusp form on the boundary. But since $F$ and $\Phi_{0, \frakv}(Z)$ are Eisenstein series on the boundary, $G = F - \Phi_{0, \frakv}(Z)$ must vanish on the boundary and hence $G$ is a cusp form. Since we are in singular weight, $G$ must vanish and thus $F = \Phi_{0, \frakv}(Z)$.
\end{proof}

\begin{cor}\label{cor:ThetaLiftSurjective}
If $\pi_L$ is injective, then the theta lift $\Phi$ surjects onto the space of holomorphic modular forms of singular weight which are Eisenstein series on the boundary, i.e. we have $\M_\kappa^\Phi(\Gamma(L)) = M_\kappa^{\partial \Eis}(\Gamma(L))$.
\end{cor}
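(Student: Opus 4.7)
The plan is to observe that this corollary is essentially an immediate combination of Corollary \ref{cor:ThetaLiftImage} with the definition of $\M_\kappa^\pi(\Gamma(L))$. Recall from Section 4 that $\M_\kappa^\pi(\Gamma(L))$ is defined as the subspace of $\M_\kappa^{\partial \Eis}(\Gamma(L))$ consisting of those holomorphic orthogonal modular forms whose values in the $0$-dimensional cusps depend only on the image of the cusp under $\pi_L$; and it was already noted there that $\M_\kappa^\pi(\Gamma(L)) = \M_\kappa^{\partial \Eis}(\Gamma(L))$ whenever $\pi_L$ is injective, since in that case the dependence condition is vacuous.

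So the proof strategy is: first, assuming $\pi_L$ injective, invoke the definition of $\M_\kappa^\pi$ to conclude $\M_\kappa^\pi(\Gamma(L)) = \M_\kappa^{\partial \Eis}(\Gamma(L))$. Second, apply Corollary \ref{cor:ThetaLiftImage}, which identifies $\M_\kappa^\Phi(\Gamma(L))$ with $\M_\kappa^\pi(\Gamma(L))$ unconditionally. Chaining these two equalities yields $\M_\kappa^\Phi(\Gamma(L)) = \M_\kappa^{\partial \Eis}(\Gamma(L))$, as desired.

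There is essentially no obstacle here, since all the substantive content has already been carried out: the nontrivial work of the section is in Theorem \ref{thm:AdjointThetaLiftInvariantVector} (computing the constant Fourier coefficient of $\Phi^*(\tau, F)$ as an invariant vector), Corollary \ref{cor:AdjointThetaLiftKernel} (using the $L$-function argument with Dirichlet characters to show the kernel of $\Phi^*$ consists of forms vanishing at all $0$-dimensional cusps), and Corollary \ref{cor:ThetaLiftImage} (using adjointness from Lemma \ref{lem:ThetaLiftsAdjoint} together with the absence of cusp forms of singular weight to deduce surjectivity onto $\M_\kappa^\pi$). Once these are in hand, the only remaining ingredient is the Eichler-type lemma cited earlier, or more precisely just the observation that injectivity of $\pi_L$ collapses the distinction between $\M_\kappa^\pi$ and $\M_\kappa^{\partial \Eis}$. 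The proof is therefore a single line of bookkeeping and requires no further computation.
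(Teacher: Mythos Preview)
Your proposal is correct and matches the paper's approach exactly: the paper's proof is the single line ``Since $\pi_L$ is injective we have $\M_\kappa^\pi(\Gamma(L)) = \M_\kappa^{\partial \Eis}(\Gamma(L))$,'' with the equality $\M_\kappa^\Phi(\Gamma(L)) = \M_\kappa^\pi(\Gamma(L))$ from Corollary~\ref{cor:ThetaLiftImage} left implicit. Your write-up merely makes both steps explicit.
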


\begin{proof}
Since $\pi_L$ is injective we have $M_\kappa^\pi(\Gamma(L)) = M_\kappa^{\partial \Eis}(\Gamma(L))$.
\end{proof}

\begin{cor}\label{cor:ThetaLiftMaximalLattice}
If $L$ is a maximal lattice of Witt rank $2$, then the space $\M_\kappa^{\partial \Eis}(\Gamma(L))$ is either $1$-dimensional (if $L$ is unimodular) or $0$-dimensional (if $L$ is not unimodular). Moreover, if $\kappa = 2, 4, 6, 8, 10, 14$, i.e. $l = 6, 10, 14, 18, 22, 30$, then we have $\M_\kappa(\Gamma(L)) = \M_\kappa^{\partial \Eis}(\Gamma(L))$. In the other cases there could be additional holomorphic modular forms that are cusp forms on the boundary.
\end{cor}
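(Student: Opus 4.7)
The plan is to combine the previous results. Since $L$ is a maximal lattice of Witt rank $2$, it splits two hyperbolic planes over $\IZ$, so by Corollary \ref{cor:ThetaLiftSurjective} (together with the adjoint Corollary \ref{cor:AdjointThetaLiftKernel}) the additive Borcherds lift induces an isomorphism $\Inv(\IC[L'/L]) \to \M_\kappa^{\partial \Eis}(\Gamma(L))$. The first assertion therefore reduces to computing the dimension of $\Inv(\IC[L'/L])$.

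I would argue this as follows. If $\gamma \in L'/L$ were a nonzero isotropic element, then picking a lift $\tilde\gamma \in L'$ the lattice $L + \IZ\tilde\gamma$ would be an even overlattice strictly containing $L$, contradicting maximality; hence $\Iso(L'/L) = \{0\}$. Since $T$-invariance of $\frakv \in \IC[L'/L]$ forces $\frakv_\gamma = 0$ whenever $q(\gamma) \notin \IZ$, all invariants are supported on $\{0\}$ and $\Inv(\IC[L'/L]) \subseteq \IC \frake_0$. Applying the formula
$$\rho_L(S) \frake_0 = \frac{\sqrt{i}^{b^- - b^+}}{\sqrt{\lvert L'/L \rvert}} \sum_{\delta \in L'/L} \frake_\delta$$
from Theorem \ref{thm:Shintani}, one sees that $\frake_0$ is $S$-invariant precisely when $\lvert L'/L \rvert = 1$, i.e. when $L$ is unimodular. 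Combined with the isomorphism above this yields the claimed dimension of $\M_\kappa^{\partial \Eis}(\Gamma(L))$.

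For the second assertion, let $F \in \M_\kappa(\Gamma(L))$ and let $I$ be a $1$-dimensional boundary component. Choose the hyperbolic planes so that $I$ is spanned by isotropic vectors $z, d$ inside one of them; elements of $\SL_2(\IZ) \simeq \SL(\langle z, d\rangle)$ then extend by the identity on the orthogonal complement to isometries of $L$ acting trivially on the discriminant form, and therefore lie in $\Gamma(L)$. Hence the Siegel restriction $F\vert_I$ is a holomorphic modular form of weight $\kappa$ for the full group $\SL_2(\IZ)$. For the listed weights $\kappa \in \{2,4,6,8,10,14\}$ one has $\S_\kappa(\SL_2(\IZ)) = 0$ (with $\M_2(\SL_2(\IZ)) = 0$ and $\M_\kappa(\SL_2(\IZ)) = \IC \cdot E_\kappa$ in the remaining cases), so $F\vert_I$ is an Eisenstein series for every $I$, proving $F \in \M_\kappa^{\partial \Eis}(\Gamma(L))$.

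The main technical point to verify is the claim that the stabilizer of $I$ in $\Gamma(L)$ surjects onto all of $\SL_2(\IZ)$ acting on the boundary: this rests on the fact that the $\SL(U)$-action on a hyperbolic plane $U \subseteq L$ is trivial on the discriminant form, hence factors through $\Gamma(L) = \ker(O^+(L) \to O(L'/L))$. Everything else is bookkeeping on top of results already established in the paper, together with the well-known dimensions of $\M_\kappa(\SL_2(\IZ))$; for weights outside the listed set $\S_\kappa(\SL_2(\IZ)) \neq 0$, which is precisely the caveat in the statement allowing extra holomorphic forms that are cusp forms on the boundary.
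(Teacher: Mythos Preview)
Your approach matches the paper's: reduce to $\dim \Inv(\IC[L'/L])$ via Corollary~\ref{cor:ThetaLiftSurjective}, compute that dimension, and then use the absence of $\SL_2(\IZ)$ cusp forms in the listed weights. Your added detail on why $\Inv(\IC[L'/L])$ is at most one-dimensional (no nonzero isotropic elements in $L'/L$ for maximal $L$, then the $S$-action forces $\lvert L'/L\rvert=1$) is correct and more explicit than the paper, which simply asserts the dimension.

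There is one technical slip in your second paragraph. An isotropic plane $I=\langle z,d\rangle$ has rank~$2$ with identically zero form, so it cannot lie inside a single hyperbolic plane, and ``extend by the identity on the orthogonal complement'' does not make sense as written since $I\subseteq I^\perp$. The correct construction is: using the two hyperbolic planes, write $L=I\oplus I^{\vee}\oplus D$ where $I^{\vee}=\langle z',d'\rangle$ is a complementary isotropic plane with $(z,z')=(d,d')=1$; then $A\in\SL_2(\IZ)$ acts on $I$ by $A$, on $I^{\vee}$ by $(A^{-1})^{T}$, and trivially on $D$. This is an isometry of $L$, lies in $O^+$, and acts trivially on $L'/L=D'/D$, hence belongs to $\Gamma(L)$. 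With this fix your argument that the boundary modular group is the full $\SL_2(\IZ)$ goes through, and the rest is exactly as in the paper.
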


\begin{proof}
Let $L$ be maximal of signature $(2, l), l \geq 6$. Then $L$ splits two hyperbolic planes over $\IZ$ and hence the map $\pi_L$ is bijective. Thus the space of holomorphic modular forms of singular weight which are linear combinations of Eisenstein series on the boundary has the same dimension as the space of invariant vectors in $\IC[L' / L]$. The latter is either $0$-dimensional (if $L$ is not unimodular) or $1$-dimensional (if $L$ is unimodular). Since we are in singular weight, there are no cusp forms. Since the $1$-dimensional boundary components of $\Gamma(L) \bs \IH_l$ are given by $\SL_2(\IZ) \bs \IH$ and since there are no cusp forms of weight $2, 4, 6, 8, 10, 14$ for $\SL_2(\IZ)$, we obtain $\M_\kappa(\Gamma(L)) = \M_\kappa^{\partial \Eis}(\Gamma(L))$ if $l = 6, 10, 14, 18, 22, 30$.
\end{proof}

\renewcommand\bibname{References}
\bibliographystyle{alphadin}
\bibliography{OrthogonalEisensteinSeriesAtHarmonicPointsAndModularFormsOfSingularWeight}

\end{document}